\numberwithin{equation}{section}
\DeclareMathOperator{\GU}{GU}
\DeclareMathOperator{\SL}{SL}
\DeclareMathOperator{\SU}{SU}
\DeclareMathOperator{\SP}{Sp}
\DeclareMathOperator{\red}{red}
\DeclareMathOperator{\Spec}{Spec}
\DeclareMathOperator{\str}{-}
\DeclareMathOperator{\Aff}{Aff}
\DeclareMathOperator{\aff}{af}
\DeclareMathOperator{\drv}{der}
\DeclareMathOperator{\alg}{alg}
\DeclareMathOperator{\scon}{sc}
\DeclareMathOperator{\chara}{char}
\DeclareMathOperator{\anti}{anti}
\DeclareMathOperator{\Max}{Max}
\DeclareMathOperator{\Min}{Min}
\DeclareMathOperator{\Gal}{Gal}
\DeclareMathOperator{\define}{\hspace{0.1cm}\stackrel{\text{\tiny def}}{=}\hspace{0.1cm}}
\DeclareMathOperator{\fin}{fin}
\DeclareMathOperator{\spanned}{span}
\DeclareMathOperator{\naive}{naive}
\DeclareMathOperator{\loc}{loc}
\DeclareMathOperator{\val}{val}
\DeclareMathOperator{\ort}{ort}
\DeclareMathOperator{\sympl}{spl}
\DeclareMathOperator{\inv}{inv}
\newcommand{\pot}[1]{ [\hspace{-0,5mm}[ {#1} ]\hspace{-0,5mm}] }
\newcommand{\rpot}[1]{ (\hspace{-0,7mm}( {#1} )\hspace{-0,7mm}) }
\newcommand{\iweyl}[2]{{_{#1}\hspace{-0.01cm}\widetilde{W}}\vspace{-0.01cm}^{#2}}
\newcommand{§}{\ss}
\newtheorem{lem}{Lemma}[section]
\newtheorem{cor}[lem]{Corollary}
\newtheorem{thm}[lem]{Theorem}
\newtheorem{prop}[lem]{Proposition}
\newtheorem*{acknowledge}{Acknowledgements}
\newtheorem*{notation}{Notation}
\theoremstyle{definition}
\newtheorem{definition}[lem]{Definition}
\theoremstyle{remark}
\newtheorem{rem}[lem]{Remark}
\newtheorem{examples}[lem]{Examples}
\title[Schubert varieties and local models]{Schubert varieties in twisted affine flag varieties and local models}
\author[T. Richarz]{by Timo Richarz}
\begin{document}

\maketitle

\begin{abstract}
Our concern in this paper is the dimension and inclusion relations of Schubert varieties in twisted partial affine flag varieties. In the end we apply our results to some local models of certain Schubert varieties.
\end{abstract}

\section{Introduction} \label{intro}
Let $G$ be a connected reductive linear algebraic group over the local field $K=k\rpot{t}$ of Laurent series with algebraically closed residue field $k$ and ring of integers $\mathcal{O}_K=k\pot{t}$. The loop group $LG$ is the functor
\[
LG:\Spec(R) \longmapsto G(\Spec(R\rpot{t}))
\]
on the category of affine $k$-schemes. This functor is representable by an ind-scheme of ind-finite type (=inductive limit of schemes of finite type). To a facet $F$ in the Bruhat-Tits building of $G$ there is associated a unique smooth affine group scheme $\mathcal{P}_F$ with connected fibers over $\mathcal{O}_K$ such that its generic fiber is $G$, and its $\mathcal{O}_K$-valued points $\mathcal{P}_F(\mathcal{O}_K)$ are the parahoric subgroup of $G(K)$ attached to $F$. To $\mathcal{P}_F$ corresponds an infinite-dimensional affine group scheme $L^+\mathcal{P}_F$ over $k$ with
\[
L^+\mathcal{P}_F(\Spec(R))=\mathcal{P}_F(\Spec(R\pot{t})).
\]
The quotient $\mathcal{F}_F=LG/L^+\mathcal{P}_F$ (in the sense of fpqc-sheaves) is called the \emph{twisted} affine flag variety associated to $G$ and $F$ and is representable by an ind-proj scheme (=inductive limit of projective schemes) over $k$ (cf. \cite{PRtwisted}). If $F$ is a special vertex, we also call $\mathcal{F}_F$ the \emph{twisted} affine Gra§mannian.
We consider Schubert varieties in $\mathcal{F}_F$, i.e. reduced closures of $L^+\mathcal{P}_F$-orbits in $\mathcal{F}_F$. These are finite-dimensional projective varieties over $k$. Our concern in this paper is the dimension and inclusion relations of Schubert varieties. In the end we apply our results to some local models of certain Schubert varieties.

Fix a maximal $K$-split torus $S$ in $G$ whose apartment contains $F$. The Schubert varieties in $\mathcal{F}_F$ are enumerated by double cosets $\widetilde{W}_F\backslash\widetilde{W}/\widetilde{W}_F$ of the Iwahori-Weyl group $\widetilde{W}$ modulo the subgroup $\widetilde{W}_F$ attached to $F$. For $w\in\widetilde{W}_F\backslash\widetilde{W}/\widetilde{W}_F$, the corresponding Schubert variety $S_w$ is the closure of the $L^+\mathcal{P}_F$-orbit
\[
(L^+\mathcal{P}_F\;w\;L^+\mathcal{P}_F)/L^+\mathcal{P}_F\subset\mathcal{F}_F
\]
equipped with the reduced scheme structure. \\ 
The choice of an alcove containing $F$ in its closure endows the group $\widetilde{W}$ with the structure of a quasi Coxeter-system, which may thus be equipped with a Bruhat-Chevalley (partial) order $\leq$ and a length function $l$. To $w\in\widetilde{W}_F\backslash\widetilde{W}/\widetilde{W}_F$, we associate a unique representative ${_Fw}^F$ in $\widetilde{W}$ which satisfies
\[
l({_Fw}^F)\;\:=\:\underset{w_1\in\widetilde{W}_F}{\Max}\;\underset{w_2\in\widetilde{W}_F}{\Min}l(w_1\dot{w}w_2),
\]
where $\dot{w}$ is any representative of $w$ in $\widetilde{W}$. If we set $\iweyl{F}{F}\define\{w\in\widetilde{W}\;|\;w={_F\hspace{-0.01cm}w}^F\}$, then the Schubert varieties in $\mathcal{F}_F$ are also enumerated by $\iweyl{F}{F}$.

\begin{prop}
Let $S_w$ be the Schubert variety in $\mathcal{F}_F$ corresponding to $w\in\iweyl{F}{F}$. \\ \smallskip
\emph{(i)} $S_w$ is set theoretically a disjoint union of locally closed strata
\[
S_w=\coprod_{\substack{v\in\iweyl{F}{F} \\  v\leq w}}(L^+\mathcal{P}_F\;v\;L^+\mathcal{P}_F)/L^+\mathcal{P}_F.
\]
\emph{(ii)} The dimension of $S_w$ is equal to $l(w)$. 
\end{prop}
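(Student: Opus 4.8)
The plan is to compare $\mathcal{F}_F$ with the twisted affine flag variety at Iwahori level. Fix an alcove $\mathbf{a}$ with $F$ in its closure, let $\mathcal{I}=\mathcal{P}_{\mathbf{a}}$ be the associated Iwahori group scheme, so that $L^+\mathcal{I}\subset L^+\mathcal{P}_F$, put $\mathcal{F}_{\mathbf{a}}=LG/L^+\mathcal{I}$, and let $\pi\colon\mathcal{F}_{\mathbf{a}}\to\mathcal{F}_F=LG/L^+\mathcal{P}_F$ be the projection. Then $\pi$ is $LG$-equivariant and, étale-locally on $\mathcal{F}_F$, isomorphic to a projection $U\times(L^+\mathcal{P}_F/L^+\mathcal{I})\to U$; in particular it is smooth and surjective, hence open, and every fibre is the finite-dimensional smooth projective variety $L^+\mathcal{P}_F/L^+\mathcal{I}$ (the flag variety of the reductive quotient of the special fibre of $\mathcal{P}_F$), of dimension $l(w_F)$, where $w_F\in\widetilde{W}_F$ is the longest element. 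From the generalized Tits system $(LG,L^+\mathcal{I},N)$ attached to the quasi--Coxeter structure on $\widetilde{W}$ (cf.\ \cite{PRtwisted}) I would first record: the Bruhat decomposition $\mathcal{F}_{\mathbf{a}}=\coprod_{v\in\widetilde{W}}\mathcal{C}_v$ with cells $\mathcal{C}_v=(L^+\mathcal{I}\,v\,L^+\mathcal{I})/L^+\mathcal{I}\cong\mathbb{A}^{l(v)}_k$ and closure relation $\overline{\mathcal{C}_v}=\coprod_{v'\le v}\mathcal{C}_{v'}$; the decomposition of $\mathcal{F}_F$ into its $L^+\mathcal{P}_F$-orbits $\mathcal{O}_{[v]}$, indexed by double cosets $[v]\in\widetilde{W}_F\backslash\widetilde{W}/\widetilde{W}_F$; the identity $L^+\mathcal{P}_F=\coprod_{u\in\widetilde{W}_F}L^+\mathcal{I}\,u\,L^+\mathcal{I}$; and, as a consequence of the last point and the usual $BN$-pair multiplication rules, the set-theoretic identity $\pi^{-1}(\mathcal{O}_{[v]})=\coprod_{v'\in[v]}\mathcal{C}_{v'}$ inside $\mathcal{F}_{\mathbf{a}}$.

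For (i): because $\pi$ is open, $\pi^{-1}$ commutes with taking closures, so
\[
\pi^{-1}(S_w)=\pi^{-1}\bigl(\overline{\mathcal{O}_{[w]}}\bigr)=\overline{\pi^{-1}(\mathcal{O}_{[w]})}=\coprod_{v'\in[w]}\overline{\mathcal{C}_{v'}}=\coprod_{v'\le v^{\max}_{[w]}}\mathcal{C}_{v'},
\]
where $v^{\max}_{[w]}$ is the longest element of the double coset $[w]$; thus $\pi^{-1}(S_w)$ is the Iwahori Schubert variety of $v^{\max}_{[w]}$. Now the set $\{v'\in\widetilde{W}:v'\le v^{\max}_{[w]}\}$ is stable under left and right multiplication by $\widetilde{W}_F$, hence a union of double cosets: since $v^{\max}_{[w]}$ is maximal in its double coset one has $l(sv^{\max}_{[w]})<l(v^{\max}_{[w]})$ and $l(v^{\max}_{[w]}s)<l(v^{\max}_{[w]})$ for every simple reflection $s$ of $\widetilde{W}_F$, and the lifting property of the Bruhat order on the quasi--Coxeter group $\widetilde{W}$ propagates this to the whole Bruhat interval below $v^{\max}_{[w]}$. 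Consequently $\pi^{-1}(S_w)=\coprod_{[v]:\,v^{\max}_{[v]}\le v^{\max}_{[w]}}\pi^{-1}(\mathcal{O}_{[v]})$, and applying the surjection $\pi$ — together with the disjointness of distinct $L^+\mathcal{P}_F$-orbits — gives $S_w=\coprod_{[v]:\,v^{\max}_{[v]}\le v^{\max}_{[w]}}\mathcal{O}_{[v]}$. It remains to rewrite the index condition in terms of $\iweyl{F}{F}$: for $v\in\iweyl{F}{F}$ the distinguished representative is minimal on the right inside $[v]$ and its right $\widetilde{W}_F$-coset has the longest minimum, so $v^{\max}_{[v]}=v\cdot w_F$ with $l(v^{\max}_{[v]})=l(v)+l(w_F)$; since right multiplication by $w_F$ is a length-monotone bijection (in both directions, again by the lifting property) between the minimal-on-the-right and the maximal-on-the-right representatives, $v^{\max}_{[v]}\le v^{\max}_{[w]}\Leftrightarrow v\le w$. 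This is (i).

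For (ii): by (i), $\mathcal{O}_{[w]}$ is open and dense in $S_w$, so $\dim S_w=\dim\mathcal{O}_{[w]}$. The morphism $\pi^{-1}(\mathcal{O}_{[w]})\to\mathcal{O}_{[w]}$ obtained from $\pi$ by base change is smooth and surjective with all fibres of dimension $l(w_F)$, hence $\dim\mathcal{O}_{[w]}=\dim\pi^{-1}(\mathcal{O}_{[w]})-l(w_F)$; and $\dim\pi^{-1}(\mathcal{O}_{[w]})=\dim\coprod_{v'\in[w]}\mathcal{C}_{v'}=\max_{v'\in[w]}l(v')=l(v^{\max}_{[w]})$. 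Finally $l(v^{\max}_{[w]})=l(w)+l(w_F)$: every right $\widetilde{W}_F$-coset $R\subset[w]$ has $\max(R)=m_R\,w_F$ with $l(\max(R))=l(m_R)+l(w_F)$, where $m_R:=\min(R)$ is minimal on the right, and $l(w)=\max_R l(m_R)$ is exactly the displayed length condition defining ${}_Fw^F$. Therefore $\dim S_w=l(w)$.

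The routine ingredients are the Iwahori-level Bruhat theory, the affine-space structure of the cells $\mathcal{C}_v$, and the fibration $\pi$; the one genuinely combinatorial point — and where I expect to spend effort — is the handling of $\widetilde{W}_F$-double cosets in (i): that the Bruhat down-set of $v^{\max}_{[w]}$ is a union of double cosets, and that the induced order agrees with the Bruhat order on the distinguished representatives $\iweyl{F}{F}$. In the ordinary Coxeter case this is classical (the lifting property together with the structure theory of parabolic double cosets), and the only thing to verify is that these arguments transfer to the quasi--Coxeter group $\widetilde{W}$, which they do since they use nothing beyond the exchange and lifting properties and the standard description of minimal and maximal coset representatives, all available here by \cite{PRtwisted}.
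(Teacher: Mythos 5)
Your proposal is correct and follows essentially the same route as the paper: both pass to the Iwahori level via the projection $\mathcal{F}_C\to\mathcal{F}_F$, use the orbit closure relations and dimension-equals-length facts at Iwahori level (from the Demazure resolution in \cite{PRtwisted}), identify the fibre dimension as $l(w_F)$ via the reductive quotient of $\mathcal{P}_F$, and finish with standard double-coset combinatorics. The only differences are in bookkeeping — you take the preimage of the whole $L^+\mathcal{P}_F$-orbit and work with the maximal element $w\,w_F$ of the double coset, while the paper pulls back the cell $C_w(B,P)$ and uses $\coprod_{u\in\widetilde{W}_F}C_{wu}(B,B)$ — which amounts to the same computation.
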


\noindent Let $W_0$ be the relative Weyl group of $G$ with respect to $S$. Denote by $T$ the centralizer of $S$ in $G$. Since $G$ is quasi-split, $T$ is a maximal torus. Let $F=\{x\}$ be a special vertex and $I$ be the absolute Galois group. Then it turns out (cf. Cor. \ref{specialreps}) that $\iweyl{F}{F}=X_*(T)_I^-$ are the antidominant representatives of the $W_0$-orbits in the coinvariants under $I$ of the geometric cocharacters $X_*(T)$. Let $G_{\drv}$ be the derived group of $G$ with simply connected covering $G_{\scon}\rightarrow G_{\drv}$. Denote by $T_{\scon}$ the inverse image of $G_{\drv}\cap T$ in $G_{\scon}$. Let $\Sigma_0$ be the unique reduced root system such that
\[
X_*(T_{\scon})_I\rtimes W_0 = Q(\Sigma_0^{\vee})\rtimes W(\Sigma_0),
\]
compatible with the semidirect product decomposition. Here $W(\Sigma_0)$ is the Weyl group, and $Q(\Sigma_0^{\vee})$ is the coroot lattice with respect to $\Sigma_0$. 

\begin{cor}\label{dimcor}
Let $S_{\mu}$ be the Schubert variety in $\mathcal{F}_{F}$ corresponding to $\mu\in X_*(T)_I^-$. Then
\[
\dim(S_{\mu})=|\langle\mu,2\rho\rangle|,
\]
where $\rho$ denotes the halfsum of the positive roots in $\Sigma_0$. 
\end{cor}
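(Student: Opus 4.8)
The plan is to convert the geometric assertion, via the two results recalled above, into a length computation in the Iwahori--Weyl group, and then to evaluate that length by the classical formula for translation elements. First I would apply Corollary \ref{specialreps}: since $F=\{x\}$ is a special vertex, $\widetilde W_F=W_0$, $\widetilde W=X_*(T)_I\rtimes W_0$, and the identification $\iweyl{F}{F}=X_*(T)_I^-$ attaches to $\mu\in X_*(T)_I^-$ the pure translation element $t_\mu\in\widetilde W$. Hence $S_\mu=S_{t_\mu}$, and part (ii) of the Proposition gives
\[
\dim(S_\mu)=l(t_\mu).
\]
Everything then reduces to the combinatorial identity $l(t_\mu)=|\langle\mu,2\rho\rangle|$.

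Next I would pin down which root system controls $l$. The length is the one coming from the fixed alcove $\mathfrak a$ (which contains $F$ in its closure), so $l(y)$ is the number of walls of the apartment $X_*(T)_I\otimes\mathbb R$ separating $\mathfrak a$ from $y\cdot\mathfrak a$; on the subgroup of $\widetilde W$ generated by the wall reflections it is the ordinary Coxeter length. That subgroup — the affine Weyl group inside $\widetilde W$ — is, by the very definition of $\Sigma_0$ recalled in the introduction, the affine Weyl group $Q(\Sigma_0^\vee)\rtimes W(\Sigma_0)$ of the \emph{reduced} system $\Sigma_0$, with $W_0$ acting on $X_*(T)_I$ through the identification with $W(\Sigma_0)$. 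Consequently the walls are exactly the affine hyperplanes $\{\langle\alpha,\cdot\rangle\in\mathbb Z\}$ for $\alpha\in\Sigma_0$, and I may describe $\mathfrak a$ as an alcove adjacent to $x$ lying in the antidominant Weyl chamber of $\Sigma_0$ based at $x$ (this is precisely the normalisation making $\iweyl{F}{F}=X_*(T)_I^-$ the antidominant representatives).

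With these normalisations the computation is the standard one: for any $\lambda\in X_*(T)_I$,
\[
l(t_\lambda)=\sum_{\alpha\in\Sigma_0^+}\bigl|\langle\lambda,\alpha\rangle\bigr|,
\]
since $\langle\alpha,\cdot\rangle$ takes values in the open interval $(-1,0)$ on $\mathfrak a$, so that for each $\alpha\in\Sigma_0^+$ the number of hyperplanes $\langle\alpha,\cdot\rangle=n$ ($n\in\mathbb Z$) separating $\mathfrak a$ from $\mathfrak a+\lambda$ is exactly $|\langle\lambda,\alpha\rangle|$ (this works also in the extended affine Weyl group, the length-$0$ elements fixing $\mathfrak a$). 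As $\mu\in X_*(T)_I^-$ is antidominant for $W_0=W(\Sigma_0)$, all the pairings $\langle\mu,\alpha\rangle$ with $\alpha\in\Sigma_0^+$ are $\le 0$, whence
\[
l(t_\mu)=\sum_{\alpha\in\Sigma_0^+}\bigl|\langle\mu,\alpha\rangle\bigr|=-\Bigl\langle\mu,\sum_{\alpha\in\Sigma_0^+}\alpha\Bigr\rangle=-\langle\mu,2\rho\rangle=|\langle\mu,2\rho\rangle|,
\]
which is the assertion.

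The step requiring the most care is the middle one: one must be certain that it is $\Sigma_0$, rather than the (possibly non-reduced) relative root system of $G$ over $K$, that governs both the wall structure and the pairing $\langle\mu,2\rho\rangle$, and that the roots are normalised as in $\Sigma_0$. This is exactly why $\Sigma_0$ was isolated through $X_*(T_{\scon})_I\rtimes W_0=Q(\Sigma_0^\vee)\rtimes W(\Sigma_0)$: that identification fixes the Coxeter structure of the affine Weyl group, hence the Coxeter length of $t_\mu$ and with it $\dim(S_\mu)$. One should also confirm, as part of Corollary \ref{specialreps}, that the distinguished representative of the double coset of $\mu$ really is the pure translation $t_\mu$, carrying no additional Weyl part that would contribute extra length; the remainder is bookkeeping with dominance conventions.
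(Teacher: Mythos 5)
Your argument is correct and follows essentially the same route as the paper: the paper likewise combines Proposition \ref{enumeratecells} (ii) (dimension $=$ length of the max--min representative) with Corollary \ref{specialreps}, whose proof computes $l(e^{\mu})=\sum_{a\in\Sigma_0^+}|\langle\mu,a\rangle|=|\langle\mu,2\rho\rangle|$ via the affine-root counting formulas \eqref{number}--\eqref{rightminspecial}, i.e.\ exactly your wall-counting computation for translation elements. Your care about $\Sigma_0$ governing the walls matches the paper's identification $W_{\aff}=Q^{\vee}(\Sigma_0)\rtimes W(\Sigma_0)$, so there is no gap.
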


\begin{rem}
The proposition and corollary are well-known in the case of split groups.
\end{rem}

Schubert varieties admit a good class of resolutions of singularities. The following Theorem is the analogue in the present case of an analogous resolution in the case of Schubert varieties in finite dimensional flag varieties (cf. \cite[Ch. 9.1]{BLak}). A proof in the case of simply connected split groups was explained to me by N. Perrin, and his arguments extend to the general case.

\begin{thm}\label{resolutionthm}
Let $F$ be a facet in the Bruhat-Tits building of $G$, and let $S$ be a fixed Schubert variety in $\mathcal{F}_F$. Assume for simplicity that $S$ is contained in the neutral component of $\mathcal{F}_F$. Then there exist parahoric group schemes $\mathcal{P}_1$,...,$\mathcal{P}_n$ and $\mathcal{Q}_1$,...,$\mathcal{Q}_{n}$ with 
\begin{align*}
& \emph{(i)} && L^+\!\mathcal{Q}_i\subset L^+\mathcal{P}_i\cap L^+\mathcal{P}_{i+1} \quad \text{for}\;\; i=1,...,n-1, \\
& \emph{(ii)} && L^+\!\mathcal{Q}_n\subset L^+\mathcal{P}_n\cap L^+\mathcal{P}_F \\
& \emph{(iii)} && L^+\mathcal{P}_F\subset L^+\mathcal{P}_1
\end{align*}
such that the morphism 
\[
L^+\mathcal{P}_1 \times^{L^+\!\mathcal{Q}_1}...\times^{L^+\!\mathcal{Q}_{n-1}} L^+\mathcal{P}_n/L^+\!\mathcal{Q}_n \longrightarrow \mathcal{F}_F
\]
given by multiplication factors through $S$ and induces a birational and $L^+\mathcal{P}_1$-equivariant morphism 
\[
L^+\mathcal{P}_1 \times^{L^+\!\mathcal{Q}_1}...\times^{L^+\!\mathcal{Q}_{n-1}} L^+\mathcal{P}_n/L^+\!\mathcal{Q}_n\to S. 
\]
Moreover, the source of this morphism is an iterated extension of homogeneous spaces and is hence smooth over $k$.
\end{thm}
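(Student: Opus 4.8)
The plan is to mimic the Demazure / Bott--Samelson construction of resolutions of classical Schubert varieties. Fix an alcove $\mathbf a$ in the building with $F\subset\overline{\mathbf a}$, and let $\mathcal I=\mathcal P_{\mathbf a}$ be the attached Iwahori group scheme, so that $L^+\mathcal I\subset L^+\mathcal P_F$ and there is a projection $q\colon\mathcal F_{\mathcal I}\to\mathcal F_{\mathcal P_F}=\mathcal F_F$. Let $w={_Fw}^F\in\iweyl{F}{F}$ be the element with $S=S_w$; since $S$ lies in the neutral component, $w$ lies in the affine Weyl group, so the discussion reduces to honest Coxeter combinatorics, and all dimension and stratification statements I shall use are those of the Proposition above, applied to $\mathcal F_{\mathcal I}$ and $\mathcal F_{\mathcal P_F}$. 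The three moves are: reduce to the Iwahori affine flag variety; resolve the Iwahori Schubert variety by a (generalized) Bott--Samelson tower whose first block is ``as large as $\mathcal P_F$''; and push down along $q$.

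\emph{Reduction to the Iwahori case.} The representative $w={_Fw}^F$ is of minimal length in the right coset $w\widetilde W_F$; this is exactly the hypothesis under which the twisted analogue of the BN--pair identity $L^+\mathcal I\,\dot w\,L^+\mathcal P_F=L^+\mathcal I\,\dot w\,L^+\mathcal I$ holds in $LG$ (equivalently $L^+\mathcal P_F\cap\dot w^{-1}L^+\mathcal I\,\dot w\subset L^+\mathcal I$). Consequently $q$ restricts to a proper surjection from the Iwahori Schubert variety $S^{\mathcal I}_w\subset\mathcal F_{\mathcal I}$ onto $S$ which is an isomorphism over the open orbit; since $\dim S^{\mathcal I}_w=l(w)=\dim S$ by the Proposition, this restriction is birational. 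It therefore suffices to produce a resolution of $S^{\mathcal I}_w$ of the required shape and post--compose with $q$, the effect of which is only to replace the final $L^+\mathcal I$--quotient by $L^+\mathcal P_F$ at the target --- legitimate because $L^+\mathcal I\subset L^+\mathcal P_F$, which is part of condition (ii).

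\emph{Resolving $S^{\mathcal I}_w$.} One wants a ``generalized reduced decomposition'' $w=v_1v_2\cdots v_n$ with $l(w)=\sum_i l(v_i)$, in which each $v_i$ is the longest element of $\widetilde W_{\mathcal P_i}$ relative to $\widetilde W_{\mathcal Q_i}$ for parahoric group schemes $\mathcal Q_i\subset\mathcal P_i\cap\mathcal P_{i+1}$ (so that the block $L^+\mathcal P_i/L^+\mathcal Q_i$ is a finite--type flag variety carrying the open $v_i$--cell), and which is chosen so that $\widetilde W_{\mathcal P_1}\supset\widetilde W_F$; in the simplest situation $\mathcal P_1=\mathcal P_F$, all $\mathcal Q_i=\mathcal I$, and $v_2,\dots,v_n$ are simple reflections, while the general case genuinely needs the larger blocks --- this is why the theorem is stated with arbitrary parahorics. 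Granting such a decomposition, the iterated multiplication morphism
\[
L^+\mathcal P_1\times^{L^+\mathcal Q_1}L^+\mathcal P_2\times^{L^+\mathcal Q_2}\cdots\times^{L^+\mathcal Q_{n-1}}L^+\mathcal P_n/L^+\mathcal Q_n\;\longrightarrow\;\mathcal F_{\mathcal I}
\]
has image $L^+\mathcal P_1L^+\mathcal P_2\cdots L^+\mathcal P_n/L^+\mathcal I=S^{\mathcal I}_w$, by the standard orbit calculation together with the length--additivity of $v_1\cdots v_n$, and its source is proper, so the image is exactly $S^{\mathcal I}_w$; and it is birational because the decomposition is irredundant, which forces the fibre over the open orbit $L^+\mathcal I\,\dot w\,L^+\mathcal I/L^+\mathcal I$ to be a single point. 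The last two points are proved by the familiar induction, peeling the factors $L^+\mathcal P_n,\dots,L^+\mathcal P_2$ off one at a time via the projections $\mathcal F_{\mathcal I}\to\mathcal F_{\mathcal P_i}$, and finally peeling $L^+\mathcal P_1$ off via $q$.

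\emph{Conclusion and main point.} Post--composing with $q$ turns this into the $L^+\mathcal P_1$--equivariant multiplication morphism $L^+\mathcal P_1\times^{L^+\mathcal Q_1}\cdots\times^{L^+\mathcal Q_{n-1}}L^+\mathcal P_n/L^+\mathcal Q_n\to\mathcal F_F$, which factors through $S$ and is birational onto it; conditions (i)--(iii) hold by the constraints imposed on the $\mathcal P_i$ and $\mathcal Q_i$ (together with $L^+\mathcal I\subset L^+\mathcal P_F$ and $\widetilde W_{\mathcal P_1}\supset\widetilde W_F$), and the source is smooth and projective over $k$, being built as an iterated fibration whose successive fibres are the finite--type flag varieties $L^+\mathcal P_i/L^+\mathcal Q_i$, i.e. an iterated extension of homogeneous spaces. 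The hard part is the combinatorial and group--theoretic input in the twisted --- possibly ramified, possibly non--reduced --- setting: the BN--pair identities for minimal coset representatives, the existence of a length--additive irredundant decomposition $w=v_1\cdots v_n$ whose first block contains $\widetilde W_F$, the single--point fibre of a generalized Bott--Samelson morphism over its open orbit, and the existence and flag--variety structure of the parahorics involved. All of these reduce to the root--group filtration and valuation data of the twisted loop group worked out in \cite{PRtwisted}; with them in hand the construction is exactly the Bott--Samelson bookkeeping of the split simply connected case explained by N. Perrin, and it extends without essential change.
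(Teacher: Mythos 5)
Your architecture is the right one (a tower of parahorics with quotients $L^+\mathcal P_i/L^+\mathcal Q_i$, birationality read off an open cell, smoothness from the iterated homogeneous fibration, equivariance from $\widetilde W_{P_1}\supset\widetilde W_F$), but the step that actually carries the theorem is the one you only postulate: the existence of a length-additive decomposition $w=v_1\cdots v_n$ in which each $v_i$ is the \emph{longest} element of $(\widetilde W_{P_i})^{Q_i}$, with $\mathcal Q_i\subset\mathcal P_i\cap\mathcal P_{i+1}$ and $\widetilde W_{P_1}\supset\widetilde W_F$. ``Granting such a decomposition'' and deferring its existence to the split Bott--Samelson bookkeeping is not a proof; note that your ``simplest situation'' ($\mathcal P_1=\mathcal P_F$, all $\mathcal Q_i=\mathcal B$, $v_1$ the longest element of $\widetilde W_F$) already fails in the paper's own ramified unitary example, where $e^{-\mu_p}=w_{p,1}w_{p,2}$ with $l(w_{p,1})=p(2m+1-p)-\tfrac{p(p+1)}{2}<m^2=l$(longest element of $\widetilde W_F$) for $m\geq 2$. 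The paper's proof consists precisely of producing the decomposition canonically: $\mathcal P_1$ is the full stabilizer of $S_w$ (which contains $\mathcal P_F$ automatically because $w={_Fw}^F$), $w_1={_{P_1}w}$, $v={^{P_1}w}$, $\mathcal Q_1=L^+\mathcal P_1\cap(\text{stabilizer of }S_v)$, together with the combinatorial claim that ${_{P_1}w}$ is the longest element of $(\widetilde W_{P_1})^{Q_1}$ — this is exactly what makes the $L^+\mathcal B$-orbit through $w_1$ open in $L^+\mathcal P_1/L^+\mathcal Q_1$ and hence $L^+\mathcal P_1\times^{L^+\mathcal Q_1}S_v\to S_w$ birational, and it sets up the induction. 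None of this is supplied in your sketch, so the core of the argument is missing.

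Your intermediate reduction through the Iwahori flag variety is moreover structurally flawed. If the tower (necessarily with $\mathcal Q_n=\mathcal I$ for the map to land in $\mathcal F_{\mathcal I}$) had image $S^{\mathcal I}_w$, then by $L^+\mathcal P_1$-equivariance $S^{\mathcal I}_w$ would contain $L^+\mathcal P_1/L^+\mathcal I$, of dimension $l(w^{\max}_{P_1})\geq l(w^{\max}_{F})$; since the theorem forces $\mathcal P_F\subset\mathcal P_1$, this exceeds $l(w)=\dim S^{\mathcal I}_w$ in general (unitary example with $p=1$, $m\geq 3$: $m^2>2m$). So the claimed equality ``image $=S^{\mathcal I}_w$'' and the birationality over $\mathcal F_{\mathcal I}$ are false exactly in the cases the theorem is about, and no resolution of the required equivariant shape lives over $\mathcal F_{\mathcal I}$ at all: the induction has to be carried out inside $\mathcal F_F$ with $(B,P_F)$-Schubert varieties, as the paper does — which also makes the Iwahori detour unnecessary, since $S=S_{{_Fw}^F}(B,P_F)$ to begin with. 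Finally, a smaller slip: the asserted identity $L^+\mathcal I\,\dot w\,L^+\mathcal P_F=L^+\mathcal I\,\dot w\,L^+\mathcal I$ is false (the left-hand side is $\coprod_{u\in\widetilde W_F}B\dot w\dot uB$ by length additivity); what is true, and all you need for the isomorphism over the open cell, is the inclusion $\dot w^{-1}L^+\mathcal I\,\dot w\cap L^+\mathcal P_F\subset L^+\mathcal I$ for $w$ minimal in $w\widetilde W_F$ — the two statements are not equivalent.
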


\noindent Suppose that $G$ splits over $K$. If $F=C$ is an alcove, then a resolution like the one in Theorem \ref{resolutionthm} can be provided by the Demazure resolution. The case of Schubert varieties in finite-dimensional flag varieties is treated in \cite{Demazure}, and the affine case is an extension of these arguments \cite[\S 8.c.]{PRtwisted}. If $F$ is a hyperspecial vertex and if $w$ is a quasi-minuscule coweight, then the resolution of Theorem $\ref{resolutionthm}$ is the same as the one constructed by B.-C. Ng\^o and P. Polo in \cite[Lemme 7.3]{NgoPolo}.

Theorem \ref{resolutionthm} has applications to the theory of local models of Shimura varieties with parahoric level structure. We illustrate this by the following example. Let $G=\GU(W,\phi)$ be the group of unitary similitudes for a hermitian vector space $(W,\phi)$ of odd dimension $n=2m+1\geq3$ over a totally ramified extension $E$ of $\mathbb{Q}_p$ with $p\neq 2$. Let $P\subset G(\mathbb{Q}_p)$ a parahoric subgroup with corresponding parahoric group scheme $\mathcal{P}$ over $\mathbb{Z}_p$. G. Pappas and M. Rapoport \cite{PRunitary} attach to $P$ the \emph{naive} local model $M^{\naive}_P$ over $\mathcal{O}_E$ which models the \'etale local structure of a corresponding model for the unitary Shimura variety of signature $(r,s)$ at a given ramified prime $p$. The local model $M^{\naive}_P$ is a projective $\mathcal{O}_E$ scheme which carries an action of $\mathcal{P}\otimes\mathcal{O}_E$ and whose generic fiber is of dimension $rs$. It admits a moduli description, but fails to be flat in general \cite{Pappas}. To remedy the failure of flatness, Pappas adds the $\wedge$-condition to the moduli description of $M^{\naive}_P$, thus defining a closed subscheme, $M^{\wedge}_P\subset M^{\naive}_P$, the \emph{wedge} local model, which agrees with $M^{\naive}_P$ on generic fibers. B. Smithling \cite{Smithling} shows that the scheme $M^{\wedge}_P$ is topologically flat. There is a further variant 
\[
M^{\loc}_P\subset M^{\wedge}_P\subset M^{\naive}_P,
\] 
which is by definition the flat closure of the generic fiber in $M^{\naive}_P$, and is thus itself flat. It is called the \emph{local model} and conjecturally admits a moduli description \cite[Conjecture 7.3]{PRunitary}. \\
An important tool for the study of these local models is the embedding of the geometric special fiber $\overline{M}^{\naive}_P=M^{\naive}_P\otimes\mathbb{F}_p^{\alg}$ of the naive local model in the partial twisted affine flag variety $\mathcal{F}_P$ corresponding to $P$ (cf. \cite[\S 3.c]{PRunitary}). This embedding is equivariant under the action of $L^+\mathcal{P}$.\\
Now assume that $P$ is a special parahoric subgroup. There are two conjugacy classes of special parahoric subgroups, and we assume that $P$ corresponds to the class given by $I=\{m\}$ in the notation of \cite[1.b.3 a)]{PRunitary}. So $\mathcal{F}=\mathcal{F}_P$ is a twisted affine Gra§mannian. Let $M^{\wedge}_s=M^{\wedge}_P$ be the wedge local model for signature $(r,s)$. This case is also considered by K. Arzdorf in \cite{Arzdorf}.

\begin{thm}\label{locmodthm}
There is a projective morphism 
$
\pi_s:\mathcal{M}_s^{\wedge}\longrightarrow M^{\wedge}_s
$
of $\mathcal{O}_E$-schemes which is the identity on generic fibers, and which satisfies the following properties:\smallskip \\
 \emph{(i)} The geometric special fiber
 $
 \overline{\mathcal{M}}^{\wedge}_s=\cup_{i=0}^sZ_i
 $
 is the union of $s+1$ irreducible and generically smooth divisors on the scheme $\mathcal{M}^{\wedge}_s$ which are the strict transforms of $s+1$ linearly ordered strata in $\overline{M}^{\wedge}_s$. The divisors $Z_0$, $Z_s$ are smooth, and the restriction $\overline{\pi}_s|_{Z_s}:Z_s\rightarrow \overline{M}^{\wedge}_s$ is a surjective birational projective morphism. In particular, $\overline{M}^{\wedge}_s$ is irreducible and contains a non-empty open reduced subscheme. \smallskip \\
 \emph{(ii)} The special fiber of the local model $\overline{M}^{\loc}_s$ is equal to the reduced locus $(\overline{M}^{\wedge}_s)_{\text{\rm red}}$, and hence $\overline{\pi}_s|_{Z_s}$ factors through $\overline{M}^{\loc}_s$, thus defining the morphism $\theta:Z_s\rightarrow \overline{M}^{\loc}_s$ which is embedded in a twisted affine Gra§mannian, such that $\theta$ is identical to an equivariant affine Demazure resolution in the sense of Theorem \ref{resolutionthm}.
\end{thm}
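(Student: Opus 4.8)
The plan is to first identify the geometric special fibre $\overline{M}^{\wedge}_s$ with an explicit Schubert variety in $\mathcal F=\mathcal F_P$, then to build $\mathcal M^{\wedge}_s$ over $\mathcal O_E$ out of the affine Demazure data of Theorem \ref{resolutionthm}, and finally to deduce (i) and (ii). First I would use the $L^+\mathcal P$-equivariant closed embedding $\overline{M}^{\naive}_s\hookrightarrow\mathcal F$ of \cite[\S 3.c]{PRunitary}, which restricts to $\overline{M}^{\wedge}_s\hookrightarrow\mathcal F$. By Smithling's topological flatness of $M^{\wedge}_P$, the reduced subscheme $(\overline{M}^{\wedge}_s)_{\mathrm{red}}$ is $L^+\mathcal P$-stable and equidimensional of dimension $rs$, hence a union of Schubert varieties; since the generic fibre of $M^{\wedge}_s$ is the irreducible Shimura flag variety of the signature-$(r,s)$ coweight $\mu_s\in X_*(T)_I^-$ (Corollary \ref{specialreps}) and $|\langle\mu_s,2\rho\rangle|=rs$ by Corollary \ref{dimcor}, this union is the single Schubert variety $S_{\mu_s}$. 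Using the explicit reduced root system $\Sigma_0$ attached to the chosen special vertex $x$ (with $I=\{m\}$), I would then check that $\{\nu\in\iweyl{F}{F}\ :\ \nu\le\mu_s\}$ is a chain $\mu_s=\nu_s>\nu_{s-1}>\dots>\nu_0$; by the stratification of Schubert varieties established above, $S_{\mu_s}$ is set-theoretically the disjoint union of the $s+1$ linearly ordered strata $\mathcal O_{\nu_i}=(L^+\mathcal P\,\nu_i\,L^+\mathcal P)/L^+\mathcal P$, with $\overline{\mathcal O_{\nu_i}}=S_{\nu_i}$.

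Next I would construct $\mathcal M^{\wedge}_s$. Applying Theorem \ref{resolutionthm} to $S=S_{\mu_s}$ yields parahoric group schemes $\mathcal P_1,\dots,\mathcal P_n$, $\mathcal Q_1,\dots,\mathcal Q_n$ and a birational $L^+\mathcal P_1$-equivariant affine Demazure resolution $q\colon\widetilde S\to S_{\mu_s}$, an iterated fibration over a homogeneous space. The group schemes $\mathcal P_i,\mathcal Q_i$ and the lattices they fix are defined over $\mathcal O_E$ via the ambient lattice chain of $\GU(W,\phi)$, so I would define $\mathcal M^{\wedge}_s$ by the $\mathcal O_E$-moduli problem that adds to a point of $M^{\wedge}_s$ a flag of sub-bundles interpolating through these lattices and satisfying the rank and wedge conditions imposed by $q$ --- equivalently, as the scheme-theoretic closure of the generic fibre of $M^{\wedge}_s$ inside the global convolution affine Grassmannian over $\Spec\mathcal O_E$ attached to $\mathcal Q_1,\dots,\mathcal Q_n$. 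Over the generic fibre the added datum is uniquely determined, so $\pi_s\colon\mathcal M^{\wedge}_s\to M^{\wedge}_s$ (forgetting it) is the identity on generic fibres; it is projective because its fibres are iterated flag/$\mathbb P^1$-bundles and all schemes in sight are of finite type over $\mathcal O_E$, and on special fibres it is the multiplication morphism of Theorem \ref{resolutionthm}, landing in $\overline{M}^{\wedge}_s\subset\mathcal F$.

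It then remains to analyse the special fibre $\overline{\mathcal M}^{\wedge}_s$. For each $i$, the closure of the locus where the interpolating flag degenerates onto the $i$-th stratum is an irreducible divisor $Z_i$ on $\mathcal M^{\wedge}_s$ with $\overline{\pi}_s(Z_i)=S_{\nu_i}$, i.e.\ the strict transform of the stratum closure $S_{\nu_i}\subset\overline{M}^{\wedge}_s$; these give the $s+1$ divisors, linearly ordered by the $\nu_i$. The top one, $Z_s$, is the closure of the open locus where $q$ is an isomorphism, hence is (the total space of) $\widetilde S$, hence smooth over $k$, and $\overline{\pi}_s|_{Z_s}=q$ is surjective, birational and projective; therefore $\overline{M}^{\wedge}_s=S_{\mu_s}$ is irreducible and, being the birational image of the smooth $Z_s$, contains the non-empty open reduced locus over which $q$ is an isomorphism. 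The bottom one, $Z_0$, is the base homogeneous space, hence smooth, and the intermediate $Z_i$ are flag/$\mathbb P^1$-towers over partial flag varieties, hence generically smooth divisors. For (ii): $M^{\loc}_s$ is flat over $\mathcal O_E$ and contained in $M^{\wedge}_s$, so $\overline{M}^{\loc}_s$ is a closed subscheme of $(\overline{M}^{\wedge}_s)_{\mathrm{red}}=S_{\mu_s}$, and by flatness over the discrete valuation ring $\mathcal O_E$ together with Corollary \ref{dimcor} it is equidimensional of dimension $rs=\dim S_{\mu_s}$; it is generically reduced (it receives a birational morphism from the normal $Z_s$, and the open orbit $\mathcal O_{\mu_s}\subset M^{\loc}_s$ is reduced) and has no embedded components (from Smithling's results, respectively the Cohen--Macaulayness of $M^{\loc}_s$), so $\overline{M}^{\loc}_s=(\overline{M}^{\wedge}_s)_{\mathrm{red}}$ as schemes. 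Hence $\overline{\pi}_s|_{Z_s}$ factors through $\overline{M}^{\loc}_s$, giving $\theta\colon Z_s\to\overline{M}^{\loc}_s\subset\mathcal F$, which is exactly the equivariant affine Demazure resolution of Theorem \ref{resolutionthm}.

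The hard part will be the construction in the second paragraph together with the component analysis opening the third: arranging the auxiliary moduli problem (or the closure inside the global convolution Grassmannian) so that its special fibre is precisely the prescribed \emph{reducible} iterated bundle with the $s+1$ strict transforms $Z_i$, rather than the irreducible smooth $\widetilde S$ or a non-reduced thickening, while remaining compatible with the multiplication morphism of Theorem \ref{resolutionthm}. The purely combinatorial claim that $\{\nu\le\mu_s\}$ is a chain, and the reducedness assertion in (ii), are the remaining technical points.
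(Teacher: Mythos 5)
There is a genuine gap, and it sits exactly where you flag ``the hard part'': the construction of $\mathcal{M}^{\wedge}_s$ and the identification of the components of its special fiber are not auxiliary technicalities but the actual content of the theorem. The paper does not build $\mathcal{M}^{\wedge}_s$ out of the Demazure data or as a closure in a convolution Grassmannian; it \emph{defines} $\mathcal{M}^{\wedge}_s$ by an explicit splitting-model-type moduli problem, adding to $(\mathcal{E},\mathcal{E}')\in M^{\wedge}_s(S)$ a rank-$s$ locally direct summand $\mathcal{G}'\subset\mathcal{E}'$ with $(\Pi+\pi)\mathcal{E}'\subset\mathcal{G}'$ and $(\Pi-\pi)|\mathcal{G}'=0$. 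All the assertions of (i) and (ii) are then verified by concrete lattice linear algebra: the strict transform $Z=Z_s$ is cut out by requiring $\mathcal{G}'$ to be isotropic inside $i\Pi\lambda_{m,k}$, is shown to be an isotropic-Grassmannian bundle, hence irreducible, smooth, of dimension $rs$ (Lemma \ref{stricttransform}); surjectivity and birationality of $\overline{\pi}_s|_{Z}$ is a pointwise lattice argument (Corollary \ref{arzcor}); the intermediate $Z_i$ are analyzed as Grassmannian bundles over the strata $U^{\wedge}_i$; and the identification of $\theta$ with the Demazure resolution uses the lattice description $\widetilde{\mathcal{S}}_p\to\mathcal{S}_p$ of Proposition \ref{demresolunit} together with an explicit embedding $Z\hookrightarrow L^+\mathcal{P}/L^+\mathcal{Q}_p\times\mathcal{F}^0$. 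Your alternative (scheme-theoretic closure of the generic fiber in a convolution Grassmannian, or a vaguely specified ``interpolating flag'' moduli problem) would have to be pinned down before any of this can be checked, and it is not clear it can deliver what the statement asks: a flat closure is automatically flat, whereas the relevant property here is that the special fiber of $\mathcal{M}^{\wedge}_s$ maps to $\overline{M}^{\wedge}_s$ (not merely to $\overline{M}^{\loc}_s$) and contains divisorial strict transforms of \emph{all} $s+1$ strata, including those lying only in the non-reduced wedge model; establishing that, and that the top component is the Demazure space, is precisely what your plan defers.

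A second, smaller gap is the reducedness claim in (ii). You deduce $\overline{M}^{\loc}_s=(\overline{M}^{\wedge}_s)_{\text{\rm red}}$ from Cohen--Macaulayness of $M^{\loc}_s$ and from ``Smithling's results'' excluding embedded components; neither input is available (Smithling proves topological flatness of $M^{\wedge}_s$, a statement about underlying sets, and no Cohen--Macaulayness of $M^{\loc}_s$ is established anywhere in the paper). The paper's route is different: the birational morphism from the smooth $Z$ gives a non-empty open reduced subset of $\overline{M}^{\wedge}_s$, the reduced special fiber is identified with the Schubert variety $S_{\mu_s}$, which is \emph{normal} by Theorem \ref{normality} (this is where the tameness and $p\neq 2$ hypotheses enter), and then Hironaka's lemma [EGA4, IV.5.12.8] applied to the flat $\mathcal{O}_E$-scheme $M^{\loc}_s$ yields $\overline{M}^{\loc}_s=(\overline{M}^{\loc}_s)_{\text{\rm red}}$. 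Your first paragraph (identification of $(\overline{M}^{\wedge}_s)_{\text{\rm red}}$ with the single Schubert variety $S_{\mu_s}$ and the chain structure of $\{\nu\in\iweyl{F}{F}:\nu\leq\mu_s\}$) is sound in outline and matches Corollaries \ref{specialschubert} and \ref{exfundcounitary}, though the paper obtains irreducibility of $\overline{M}^{\wedge}_s$ independently of Smithling, as a consequence of Corollary \ref{arzcor}.
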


\noindent As a corollary we obtain the main result of K. Arzdorf \cite{Arzdorf}.

\begin{cor}
The geometric special fiber of the local model $M_s^{\loc}$ is normal, Frobenius-split and has only rational singularities.
\end{cor}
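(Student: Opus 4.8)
The plan is to deduce the corollary from Theorem~\ref{locmodthm}(ii), which exhibits $\overline{M}^{\loc}_s$ as a Schubert variety, and then to apply the twisted affine analogue of the classical good properties of Schubert varieties. By Theorem~\ref{locmodthm}(ii) the geometric special fiber $\overline{M}^{\loc}_s=(\overline{M}^{\wedge}_s)_{\mathrm{red}}$ is reduced, and by Theorem~\ref{locmodthm}(i) it is irreducible; being a closed subscheme of the twisted affine Gra{\ss}mannian $\mathcal{F}=\mathcal{F}_P$ over $k=\mathbb{F}_p^{\alg}$ which is stable under the $L^+\mathcal{P}$-action, it is therefore a single Schubert variety $S=S_w\subset\mathcal{F}$, and the morphism $\theta\colon Z_s\to S$ of Theorem~\ref{locmodthm}(ii) is one of the equivariant affine Demazure resolutions furnished by Theorem~\ref{resolutionthm}. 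It thus suffices to prove that an arbitrary Schubert variety $S$ in a twisted affine flag variety over $k$ (with $\chara k=p\neq 2$) is normal, Cohen--Macaulay, Frobenius split, and has only rational singularities; this goes through exactly as in the classical case of Schubert varieties in finite-dimensional flag varieties.

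For Frobenius splitness I would invoke that $\mathcal{F}$ carries a Frobenius splitting which is compatible with all of its Schubert subvarieties. In the present tamely ramified situation --- $p\neq 2$, so that $\GU(W,\phi)$ splits over a tamely ramified extension --- such a splitting is available by the theory of twisted affine flag varieties (cf.\ \cite{PRtwisted}). Its restriction to $S$ shows that $S$, and hence $\overline{M}^{\loc}_s$, is Frobenius split; in particular $S$ is weakly normal and $H^{i}(S,\mathcal{L})=0$ for $i>0$ and every ample line bundle $\mathcal{L}$ on $S$.

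The remaining properties I would extract from the resolution $\theta\colon Z_s\to S$. By Theorem~\ref{resolutionthm} the source
\[
Z_s=L^+\mathcal{P}_1\times^{L^+\!\mathcal{Q}_1}\cdots\times^{L^+\!\mathcal{Q}_{n-1}}L^+\mathcal{P}_n/L^+\!\mathcal{Q}_n
\]
is a smooth projective variety, built as an iterated extension of the finite-dimensional homogeneous spaces $L^+\mathcal{P}_i/L^+\!\mathcal{Q}_i$, which are partial flag varieties; hence $Z_s$ is rational with $H^{j}(Z_s,\mathcal{O}_{Z_s})=0$ for $j>0$, and, being an iterated flag bundle, it is itself Frobenius split, compatibly with its natural boundary divisor. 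Since $\theta$ is proper and birational, with $Z_s$ smooth and $S$ Frobenius split, the standard Frobenius-splitting argument proving that Bott--Samelson--Demazure--Hansen resolutions of classical Schubert varieties are rational (as carried out by Ramanan--Ramanathan and Mehta--Ramanathan, and in the twisted affine setting indicated in \cite[\S 8]{PRtwisted}) carries over and shows that $\theta$ is a rational resolution: $\theta_*\mathcal{O}_{Z_s}=\mathcal{O}_S$, and $R^{i}\theta_*\mathcal{O}_{Z_s}=0$ and $R^{i}\theta_*\omega_{Z_s}=0$ for all $i>0$. The equality $\theta_*\mathcal{O}_{Z_s}=\mathcal{O}_S$ together with the normality of $Z_s$ forces $\theta$ to factor through the normalization of $S$, so $S$ is normal; and from the Cohen--Macaulayness of $Z_s$ and the two vanishing statements one deduces by Grothendieck duality that $S$ is Cohen--Macaulay with dualizing sheaf $\omega_S\cong\theta_*\omega_{Z_s}$, i.e.\ that $S$ has only rational singularities. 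Transporting these conclusions through the isomorphism $\overline{M}^{\loc}_s\cong S$ gives the corollary.

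The hard part will be the Frobenius-splitting step in the twisted affine setting. One has to ensure that the splitting of $\mathcal{F}$ can be chosen $L^+\mathcal{P}$-canonical, that it restricts to a compatible splitting of the Schubert variety $S$, and that it pulls back to a splitting of $Z_s$ compatible with the boundary divisors, so that the classical rationality arguments --- in particular the anticanonical vanishing $R^{i}\theta_*\omega_{Z_s}=0$ for $i>0$ --- apply verbatim; one should also confirm that the hypotheses of this machinery genuinely cover the ramified odd unitary group $\GU(W,\phi)$, which they do since $p\neq 2$. Granting this, the passage from a rational resolution with Cohen--Macaulay source to the Cohen--Macaulayness and rational singularities of the base is a routine application of duality.
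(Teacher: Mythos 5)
Your reduction is the same as the paper's: via Theorem \ref{locmodthm}(ii) you identify the geometric special fiber $\overline{M}^{\loc}_s$ with a Schubert variety $S$ in the twisted affine Gra{\ss}mannian (in the paper this is the Schubert variety $S_p$ with $p=s$ of \S\ref{unitary}; in the body of the paper the reducedness of $\overline{M}^{\loc}_s$ is itself obtained from Corollary \ref{arzcor} plus Hironaka's lemma applied to the flat $\mathcal{O}_L$-scheme $M^{\loc}_s$, but quoting Theorem \ref{locmodthm}(ii) for it, as you do, is legitimate at this point). Where you diverge is in how the properties of $S$ are obtained: the paper simply invokes Theorem \ref{normality} (Pappas--Rapoport), whose hypotheses hold here because $G$ splits over the quadratic extension $\tilde K/K$, which is tamely ramified since $p\neq 2$, and because $G_{\drv}=\SU(W,\phi)$ is simply connected, so $\pi_1(G_{\drv})$ is trivial. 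That single citation yields normality, Frobenius splitness and rational singularities of $S$, and hence of $\overline{M}^{\loc}_s$, with no further work.

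You instead set out to reprove this general fact via a Frobenius splitting of $\mathcal{F}$ compatible with all Schubert subvarieties and a rational-resolution argument using the equivariant resolution of Theorem \ref{resolutionthm}, and you yourself flag the splitting step as the unfinished ``hard part''. That is a genuine gap as written: the existence of such compatible splittings and the rationality of Demazure-type resolutions in the \emph{twisted} setting is not a verbatim transfer of Mehta--Ramanathan/Ramanan--Ramanathan; it is precisely the content of Theorem \ref{normality}, whose proof in \cite{PRtwisted} requires the Iwahori-level Demazure resolutions, the tameness and $\pi_1(G_{\drv})$ hypotheses, and a reduction to the split, simply connected case --- so your sketch assumes essentially what it is trying to prove, and moreover does so for the equivariant resolution with parahoric (not Iwahori) building blocks, where the boundary-divisor bookkeeping would need separate justification. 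The fix is simply to replace that entire portion of your argument by the citation of Theorem \ref{normality} together with the hypothesis check above; nothing beyond the identification of $\overline{M}^{\loc}_s$ with a Schubert variety is needed.
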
 

\begin{rem}
 If $(r,s)=(n-1,1)$, the morphism $\pi_1:\mathcal{M}_1^{\wedge}\to M_1^{\wedge}$ is a semistable resolution, i.e. $\mathcal{M}^{\wedge}_1$ is regular and the irreducible components of the special fiber are smooth divisors crossing normally. In fact, in this case the scheme $\overline{M}_1^{\loc}$ is even smooth, although it is stratified by $s+1=2$ orbits as a Schubert variety in the twisted affine Gra§mannian $\mathcal{F}$ as was observed by the author \cite[Prop. 4.16]{Arzdorf}. As a consequence the analogue of the Theorem of S. Evens and I. Mirkovi\'c \cite[Corollary B]{EvensMirkovic} that a Schubert variety in the affine Gra§mannian is singular along its boundary, fails for \emph{twisted} affine Gra§mannians.   
\end{rem}

\begin{acknowledge} \rm First of all I thank my advisor M. Rapoport for his steady encouragement and his interest in my work. I am also grateful to N. Perrin for his explanations about the construction of an equivariant Demazure resolution. Moreover, I warmly thank J.-L. Waldspurger for his explanations about Bruhat-Tits theory and some related combinatorial problems. 
\end{acknowledge}

\begin{notation}
\rm Let $K$ be a discretely valued complete field whose valuation $\val:K^{\times}\to\mathbb{Z}$ is non-trivial, non-archimedean and normalized such that uniformizers have valuation $1$. Denote by $\mathcal{O}_K$ its ring of integers with maximal ideal $\mathfrak{m}_K$ and residue field $k=\mathcal{O}_K/\mathfrak{m}_K$. \\
We assume that $k$ is \emph{algebraically closed}. Let $K^{\text{sep}}$ be a separable closure of $K$.
\end{notation}

\section{The Iwahori-Weyl group}  
First we recall some facts on the Iwahori-Weyl group as given by T. Haines and M. Rapoport in \cite{HR} and prove some combinatorial lemmas needed later.

Let $G$ be a connected reductive linear algebraic group over $K$. Note that $G$ is quasi-split by Steinberg's Theorem. Let $\mathscr{B}=\mathscr{B}(G,K)$ be the (enlarged) Bruhat-Tits building.

Fix a maximal $K$-split torus $S$. Let $T$ be the centralizer (a maximal torus) of $S$, and let $N$ be the normalizer of $S$. Denote by $W_0=N(K)/T(K)$ the relative Weyl group of $G$ with respect to $S$ and denote by $\mathscr{A}=\mathscr{A}(G,S,K)$ the apartment of $\mathscr{B}$ corresponding to $S$. 

\begin{definition}\label{IwahoriWeyl}
(i) The \emph{Iwahori-Weyl group} $\widetilde{W}(G,S)$ of $G$ with respect to $S$ is 
\[
\widetilde{W}=\widetilde{W}(G,S)\define N(K)/\mathcal{T}^0(\mathcal{O}_K),
\]
where $\mathcal{T}^0$ is the connected N\'eron model of $T$ over $\mathcal{O}_K$. \smallskip \\
(ii) Let $F$ be a facet in $\mathscr{A}$ with corresponding parahoric subgroup $P_F$. The subgroup $\widetilde{W}_F$ of the Iwahori-Weyl group corresponding to $F$ is
\[
\widetilde{W}_F\define N(K)\cap P_F/\mathcal{T}^0(\mathcal{O}_K).
\]
\end{definition}

\begin{rem} \label{normalization}
Let $\pi_1(G)$ be the quotient of $X_*(T)$ by the coroot lattice of $G$ with respect to $T$. The absolute Galois group $I=\Gal(K^{\text{sep}}/K)$ acts on $\pi_1(G)$, and we denote by $\pi_1(G)_I$ the coinvariants under this action. Kottwitz defines in \cite[\S 7]{Kott} a surjective morphism of groups  
\begin{equation}
\kappa_G: G(K) \longrightarrow \pi_1(G)_I. \label{kottwitz}
\end{equation}
We choose a different normalization of $\kappa_G$. For the torus $T$ our normalization differs from Kottwitz' normalization by a sign, i.e. we demand $q_T\circ \kappa_T=-v_T$ in Kottwitz' notation (\emph{loc. cit.}).
\end{rem}

The group $N(K)$ operates on $\mathscr{A}$ by affine transformations
\[
\nu: N(K)\longrightarrow \Aff(\mathscr{A}).
\] 
The kernel $\ker(\nu)$ contains the group $\mathcal{T}^0(\mathcal{O}_K)$. Hence, we obtain an action of $\widetilde{W}$ on $\mathscr{A}$. Let $G_1$ be the kernel of the Kotwitz morphism $\kappa_G$ and set $N_1=N(K)\cap G_1$. Fix an alcove $C$ in the apartment $\mathscr{A}$, and denote the corresponding Iwahori subgroup by $B$. Let $\mathbb{S}$ be the set of simple reflections at the walls of $C$. By Bruhat and Tits \cite[Prop. 5.2.12]{BT2}, the quadruple
\begin{equation}
(G_1,B,N_1,\mathbb{S})  \label{doublesystem}
\end{equation}
is a double Tits system with affine Weyl group\footnote{We used that $B\cap N_1=\mathcal{T}^0(\mathcal{O}_K)$.} $W_{\aff}=N_1/\mathcal{T}^0(\mathcal{O}_K)$, and the inclusion $G_1\subset G(K)$ is $B\text{-}N\text{-}$adapted of connected type (cf. \cite[4.1.3]{BT1}). \\
For any facet $F$ contained in the closure of $C$, the group $\widetilde{W}_F$ is the parabolic subgroup of the Coxeter-system $(W_{\aff},\mathbb{S})$ generated by the reflections at the walls of $C$ which contain $F$. A consequence of \eqref{doublesystem} is the following lemma:

\begin{lem}[\cite{HR} Prop. 8]\label{doubleclasses}
Let $F$ (resp. $F'$) be a facet contained in the closure of $C$, and let $P_F$ (resp. $P_{F'}$) be the associated parahoric subgroup. There is a bijection
\begin{align*}
\widetilde{W}_{F'}\backslash\widetilde{W}/\widetilde{W}_F & \overset{\cong}{\longrightarrow}P_{F'}\backslash G(K)/P_F \\
\widetilde{W}_{F'}\,w\,\widetilde{W}_F & \longmapsto P_{F'}\,n_w\,P_F,
\end{align*}
where $n_w$ denotes a representative of $w$ in $N(K)$.
\end{lem}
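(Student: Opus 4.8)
The plan is to read the lemma off the Bruhat decomposition of the double Tits system \eqref{doublesystem}, after enlarging it from the Kottwitz kernel $G_1$ to all of $G(K)$ by means of the subgroup $\Omega\subset\widetilde{W}$ stabilizing the alcove $C$. Although $(\widetilde{W},\mathbb{S})$ is only a \emph{quasi} Coxeter-system, the normal subgroup $W_{\aff}=N_1/\mathcal{T}^0(\mathcal{O}_K)$ is an honest affine Weyl group, $\widetilde{W}=W_{\aff}\rtimes\Omega$, and all the relevant combinatorics takes place inside $(W_{\aff},\mathbb{S})$ and is merely transported by $\Omega$.

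First I would collect the structural inputs. Under \eqref{doublesystem} the parahoric $P_F$ (resp.\ $P_{F'}$) lies in $G_1$ and coincides with the standard parabolic subgroup $B\widetilde{W}_FB$ (resp.\ $B\widetilde{W}_{F'}B$), where $\widetilde{W}_F\subset W_{\aff}$ is the parabolic subgroup of $(W_{\aff},\mathbb{S})$ generated by the reflections in the walls of $C$ through $F$, as recalled in the text. Since $\kappa_G$ is surjective already on $T(K)\subset N(K)$ — because $\pi_1(G)_I$ is a quotient of $X_*(T)_I=\pi_1(T)_I$ and $\kappa_T$ is surjective — one gets $G(K)=N(K)\cdot G_1$ and, by definition of $N_1$, $N(K)\cap G_1=N_1$; hence $\widetilde{W}/W_{\aff}=N(K)/N_1\overset{\cong}{\longrightarrow}G(K)/G_1$ and $\widetilde{W}=W_{\aff}\rtimes\Omega$. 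A representative $n_\omega\in N(K)$ of $\omega\in\Omega$ satisfies $n_\omega Bn_\omega^{-1}=B$ (both are the Iwahori subgroup attached to $n_\omega C=C$), so conjugation by $n_\omega$ permutes the walls of $C$ and therefore permutes $\mathbb{S}$. Combining $G_1=\coprod_{v\in W_{\aff}}Bn_vB$ with $G(K)=\coprod_{\omega\in\Omega}G_1n_\omega$ and the equality $Bn_\omega B=Bn_\omega=n_\omega B$, one upgrades the Bruhat decomposition to
\[
G(K)=\coprod_{w\in\widetilde{W}}Bn_wB,
\]
the cells being independent of the choice of representative $n_w$ because $\mathcal{T}^0(\mathcal{O}_K)\subset B$.

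The core step is the cell decomposition of an individual double coset: for every $\dot w\in\widetilde{W}$,
\[
P_{F'}\,n_{\dot w}\,P_F\;=\;\coprod_{v\in\widetilde{W}_{F'}\dot w\widetilde{W}_F}Bn_vB.
\]
Here $\supseteq$ is immediate, as a representative of $v=u_1\dot wu_2$ with $u_1\in\widetilde{W}_{F'}$, $u_2\in\widetilde{W}_F$ can be taken in $(N(K)\cap P_{F'})\,n_{\dot w}\,(N(K)\cap P_F)$ and $B\subset P_F\cap P_{F'}$. For $\subseteq$ one uses $P_{F'}=B\widetilde{W}_{F'}B$, $P_F=B\widetilde{W}_FB$ and induction on length in $W_{\aff}$: the Tits relation $Bn_sB\cdot Bn_xB\subseteq Bn_{sx}B\cup Bn_xB$ for $s\in\mathbb{S}$ (still valid after pulling out the $\Omega$-part of $x$, using $Bn_\omega B\cdot Bn_xB=Bn_{\omega x}B$ and $Bn_xB\cdot Bn_\omega B=Bn_{x\omega}B$) shows that left multiplication of $Bn_{\dot w}B$ by $B\widetilde{W}_{F'}B$ and right multiplication by $B\widetilde{W}_FB$ produce only cells $Bn_vB$ with $v\in\widetilde{W}_{F'}\dot w\widetilde{W}_F$. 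Granting this, the map $\widetilde{W}_{F'}w\widetilde{W}_F\mapsto P_{F'}n_wP_F$ is well-defined, surjective by the Bruhat decomposition of $G(K)$, and injective: if $n_{w_2}\in P_{F'}n_{w_1}P_F$ then the cell $Bn_{w_2}B$ is one of the disjoint cells on the right, so $\widetilde{W}_{F'}w_2\widetilde{W}_F=\widetilde{W}_{F'}w_1\widetilde{W}_F$. This is the asserted bijection.

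I expect the main obstacle to be the bookkeeping around $\Omega$ rather than any single computation: everything is classical for the genuine Tits system $(G_1,B,N_1,\mathbb{S})$, and the real point is that normalization of $B$ by $\Omega$ — equivalently, that $\Omega$ permutes $\mathbb{S}$ and carries standard parabolics to standard parabolics — is exactly what is needed to push the Coxeter-theoretic identities through to the quasi-Coxeter group $\widetilde{W}$ and to the full group $G(K)$. A secondary point, not to be taken for granted, is the identification $P_F=B\widetilde{W}_FB$ of the parahoric attached to a facet $F\subset\overline{C}$ with the corresponding standard parabolic of the Tits system; this follows from \cite[Prop.\ 5.2.12]{BT2} together with the description of $\widetilde{W}_F$ recalled in the text.
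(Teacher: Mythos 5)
Your argument is correct and is in substance the proof the paper has in mind (the paper refers it to \cite{HR}, Prop.~8): everything is reduced to the double Tits system \eqref{doublesystem} together with the identification of $N(K)/N_1$ with the stabilizer of the alcove $C$, the only difference being that you re-derive the parabolic double-coset decomposition with the alcove stabilizer $\Omega$ built in, whereas the paper's argument conjugates by elements of $N(K)$ to reduce to $G_1$ and then quotes the standard double-coset statement for Tits systems from Bourbaki. In particular, your observations that $\Omega$ normalizes $B$ and permutes $\mathbb{S}$, and that $P_F=B\widetilde{W}_FB$, are exactly the facts implicitly used in that reduction, so the two proofs differ only in how much of the Bourbaki input is re-proved by hand.
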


\begin{rem}
If $F'=F$ is a special vertex, then $\widetilde{W}_F$ maps (via the canonical projection) isomorphically onto $W_0$ and presents the Iwahori-Weyl group as a semidirect product
\begin{equation}
\widetilde{W}=(T(K)/\mathcal{T}^0(\mathcal{O}_K))\rtimes W_0. \label{specialdecom}
\end{equation}
The group $\mathcal{T}^0(\mathcal{O}_K)$ is equal to the kernel of the Kottwitz morphism $\kappa_T$, and hence we obtain an isomorphism
\[
T(K)/\mathcal{T}^0(\mathcal{O}_K)\xrightarrow{\kappa_T}X_*(T)_I.
\] 
Therefore, the double classes modulo $P_F$ are enumerated by the $W_0$-orbits in $X_*(T)_I$.
\end{rem}

There is an exact sequence of groups
\[
1 \longrightarrow W_{\aff} \longrightarrow \widetilde{W} \longrightarrow N(K)/N_1 \longrightarrow 1,
\] 
and the group $N(K)/N_1$ may be identified with the stabilizer of the alcove $C$ in $\widetilde{W}$. Using the Kottwitz morphism $\kappa_G$ , the inclusion $N(K)\subset G(K)$ gives an isomorphism 
\[
N(K)/N_1\cong G(K)/G_1\xrightarrow{\kappa_G}\pi(G)_I,
\]
and hence a semidirect product decomposition
\begin{equation}
\widetilde{W}=\pi_1(G)_I\ltimes W_{\aff}. \label{semidecom}
\end{equation}
Since $(W_{\aff},\mathbb{S})$ is a Coxeter-system, it is equipped with the Bruhat-Chevalley (partial) order $\leq$ and the length function $l$. By \eqref{semidecom} the Iwahori-Weyl group $\widetilde{W}$ is equipped with a quasi-Coxeter structure. 

\begin{rem}\label{simplyrem}
If $G$ is simply connected, then $G_1=G(K)$ and hence $\widetilde{W}=W_{\aff}$ is a Coxeter group.
\end{rem}

Fix two facets $F'$ and $F$ in the closure of $C$.

\begin{lem}\label{representants}
Let $w\in\widetilde{W}$. \\
\noindent\emph{(i)} There exists a unique element $w^F$ of minimal length in $w\widetilde{W}_F$.\\
\noindent\emph{(ii)}  There exists a unique element ${_{F'}\hspace{-0.01cm}w}^F$ of maximal length in $\{(vw)^F \;|\; v\in\widetilde{W}_{F'}\}$. 
\end{lem}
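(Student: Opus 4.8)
The plan is to deduce both statements from standard facts about parabolic subgroups of Coxeter systems, using the quasi-Coxeter structure \eqref{semidecom} on $\widetilde{W}$. First I would reduce to the Coxeter case: write $w = \tau \cdot w_{\aff}$ with $\tau \in \pi_1(G)_I$ and $w_{\aff} \in W_{\aff}$ according to \eqref{semidecom}. Since $\widetilde{W}_F \subset W_{\aff}$ is a parabolic subgroup of the Coxeter system $(W_{\aff}, \mathbb{S})$ (as recalled after \eqref{doublesystem}), multiplication on the right by $\widetilde{W}_F$ preserves the component $\tau$, and the length function satisfies $l(\tau \cdot w_{\aff}) = l(w_{\aff})$. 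Thus the coset $w\widetilde{W}_F$ is $\tau$ times the coset $w_{\aff}\widetilde{W}_F$ inside $W_{\aff}$, and assertion (i) becomes the classical statement that in a Coxeter system every coset modulo a parabolic subgroup contains a unique element of minimal length (cf. \cite[Chap. IV, \S1, Exercise 3]{Bourbaki}). This gives the element $w^F$, characterized equivalently by $l(w^F s) > l(w^F)$ for all $s \in \mathbb{S}$ generating $\widetilde{W}_F$.

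For (ii), I would first observe that the set $\{(vw)^F \mid v \in \widetilde{W}_{F'}\}$ is exactly the set of minimal-length representatives of the cosets in $\widetilde{W}_{F'} w \widetilde{W}_F$; this is a finite set since $\widetilde{W}_{F'}$ is finite (the reflections fixing a facet in the closure of $C$ generate a finite parabolic subgroup, being the Weyl group of a finite root system). The existence of a well-defined notion of \emph{length} on this set makes it reasonable to expect a unique maximum, but uniqueness is the point that needs an argument rather than a mere cardinality count. The standard approach is to show that this finite set, with the Bruhat order, has a greatest element: in a Coxeter system, the double coset $W_{F'} w W_F$ (with $W_{F'}$ finite) has a unique element ${_{F'}w}^F$ that is maximal both for length and for the Bruhat order among the $W_{F'}$-translates that are minimal on the right modulo $W_F$. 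One derives this from the fact that $W_{F'}$ has a longest element $w_{F'}$, and that left-multiplication by $w_{F'}$ sends minimal-length coset representatives (on the left, modulo $W_{F'}$) to maximal-length ones; combined with the right minimality modulo $W_F$, one checks that ${_{F'}w}^F = (w_{F'} \cdot {^{F'}w})^F$ where ${^{F'}w}$ is the minimal-length representative of $W_{F'} w$, is the desired element. Again the $\pi_1(G)_I$-component $\tau$ is carried along unchanged, so the passage from $W_{\aff}$ to $\widetilde{W}$ is harmless.

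The main obstacle I anticipate is bookkeeping with the three-fold structure — the double-coset refinement modulo both $\widetilde{W}_{F'}$ and $\widetilde{W}_F$ simultaneously — and in particular verifying that the element produced by the construction ${_{F'}w}^F = (w_{F'}\cdot{^{F'}w})^F$ genuinely lies in $\{(vw)^F \mid v \in \widetilde{W}_{F'}\}$ and dominates every other member in length. The cleanest way to handle this is the deletion/exchange condition: for $v \in \widetilde{W}_{F'}$, write $l((vw)^F) = l(v') + l((v'w)^F)$-type identities using the partition of $\widetilde{W}_{F'}$ into right cosets modulo $\widetilde{W}_{F'} \cap ({^{F'}w}\,\widetilde{W}_F\,{^{F'}w}^{-1})$ (the stabilizer-type subgroup governing which $v$ give the same $(vw)^F$), and then conclude that length is maximized precisely on the longest-element coset. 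I would cite \cite{HR} or a reference on Coxeter combinatorics (e.g. Bourbaki, loc. cit.) for the underlying double-coset lemma and spend the actual proof only on the reduction from $\widetilde{W}$ to $W_{\aff}$ and on identifying the maximal representative explicitly, since that explicit representative ${_{F'}w}^F$ is what is used in the definition of $\iweyl{F}{F}$ and in Proposition~\ref{dimcor}.
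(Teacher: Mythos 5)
Your plan is correct, but it takes a genuinely different route from the paper. The paper disposes of Lemma \ref{representants} in one line ("we may assume $\widetilde{W}=W_{\aff}$") and then reads both existence and uniqueness off from the proof of Lemma \ref{replength}: Waldspurger's argument counts inversions in the affine root system $\Sigma$, and (i) resp.\ (ii) drop out of the facts that there is a unique $u\in\widetilde{W}_F$ matching two Borel subsets of the finite root system $\Sigma(F)$, resp.\ a $u\in\widetilde{W}_{F'}$ realizing a parabolic subset of $\Sigma(F')$, unique modulo $\widetilde{W}_{F'}\cap w\widetilde{W}_Fw^{-1}$ (which does not change $(uw)^F$). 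That computation simultaneously produces the length formula for ${_{F'}w}^F$, which is what the paper actually needs later (Corollary \ref{specialreps}, the dimension formula). You instead argue purely inside Coxeter combinatorics: unique minimal coset representatives, finiteness of $\widetilde{W}_{F'}$, and the Kilmoyer/Howlett-type normal form for double cosets, which yields the explicit maximal representative $(w_{F'}\cdot{}^{F'}\hspace{-0.01cm}w)^F$ and uniqueness via the partition of $\widetilde{W}_{F'}$ into cosets modulo $\widetilde{W}_{F'}\cap w\widetilde{W}_Fw^{-1}$; the facts you invoke are standard and your identification of the maximal element is correct, so the plan works, though the double-coset length-additivity statement you lean on should be cited or proved rather than left to "deletion/exchange", and your approach does not by itself give the root-theoretic length count of Lemma \ref{replength}, which would still have to be established separately. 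One small wrinkle in your reduction to $W_{\aff}$ for (ii): writing $w=\tau\cdot w_{\aff}$ as in \eqref{semidecom}, left multiplication by $v\in\widetilde{W}_{F'}$ gives $vw=\tau(\tau^{-1}v\tau)w_{\aff}$, so the $\tau$-component is not merely "carried along"; you must replace $\widetilde{W}_{F'}$ by its $\tau$-conjugate, i.e.\ by the standard parabolic subgroup attached to the facet $\tau^{-1}F'$. Since $\tau$ stabilizes $C$ and permutes $\mathbb{S}$, this is harmless, but it should be said.
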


\begin{proof}
We may assume that $\widetilde{W}=W_{\aff}$. Then the result follows from the proof of Lemma \ref{replength} below.
\end{proof}

Denote by $\iweyl{F'}{F}\subset\widetilde{W}$ the subset
\begin{equation}
\iweyl{F'}{F}\define \{{_{F'}\hspace{-0.01cm}w}^F\;|\;w\in\widetilde{W}\}
\end{equation}
of the Iwahori-Weyl group. This set maps bijectively under the natural projection onto the set of double classes
\begin{equation}
\iweyl{F'}{F}\overset{\cong}{\longrightarrow} \widetilde{W}_{F'}\backslash\widetilde{W}/\widetilde{W}_F. \label{representatives}
\end{equation}

Let $V=V(G,S)$ be the $\mathbb{R}$-vector space $\mathbb{R}\otimes X_*(S)$. We have a natural perfect pairing $\langle\str,\str\rangle:X_*(S)\times X^*(S)\to\mathbb{Z}$ of free $\mathbb{Z}$-modules. Tensoring with $\mathbb{R}$ we get a natural identification of $\mathbb{R}\otimes X^*(S)$ with the dual space $V^{\vee}$ and hence a perfect pairing 
\begin{equation}
\langle\str,\str\rangle:V\times V^{\vee}\longrightarrow\mathbb{R}. \label{pairing}
\end{equation}
Choose a special vertex $\{x\}$ in the closure of $C$. Then we may identify $\mathscr{A}=V$ such that $\{x\}$ is identified with $0$. The Iwahori-Weyl group is represented as a semidirect product $\widetilde{W}=X_*(T)_I\rtimes W_0$. By Bourbaki \cite[Chap. VI, \S 2, n$^o$ 5, Prop. 8]{Bourbaki}, there is a reduced root system $\Sigma_0$ such that $W_{\aff}=Q^{\vee}(\Sigma_0)\rtimes W(\Sigma_0)$ is the affine Weyl group of $\Sigma_0$. Denote by 
\[
\Sigma=\{a+k\;|\;a\in\Sigma_0\;\;\text{and}\;\; k\in\mathbb{Z}\}
\] 
the associated system of affine roots. The Iwahori-Weyl group $\widetilde{W}$ acts on $\Sigma$ by 
\[
w(\alpha)(x)= \alpha(w^{-1}\cdot x) ,
\]
for $x\in V$, $\alpha\in\Sigma$ and $w\in\widetilde{W}$. For a root $\alpha\in\Sigma$, we write $\alpha(F)=0$, if $\alpha$ vanishes on $F$ and $\alpha(F)\geq 0$, if $\alpha$ takes non-negative values on $F$. Let $\Sigma(F)$ be the set of all affine roots with $\alpha(F)=0$. Fix an element $w\in \widetilde{W}$. The length $l(w)$ is equal to the number of elements of 
\begin{equation}
\{\alpha\in\Sigma\;|\;\alpha(C)>0\;\;\text{and}\;\; w(\alpha)(C)<0\}. \label{number}
\end{equation}
Writing $w=e^{\mu}\cdot w_{\fin}$ according to the semidirect product decomposition given by the special vertex $x$, we deduce from \eqref{number} the formula 
\begin{equation}
l(e^{\mu}\cdot w_{\fin})\quad=\sum_{\substack{a(C)>0 \\ w_{\fin}^{-1}(a)(C)>0}}\!\!\!\!\!{|\langle \mu,a\rangle|}  \quad\; +\sum_{\substack{a(C)>0 \\ w_{\fin}^{-1}(a)(C)<0}}\!\!\!\!\!{|\langle\mu,a\rangle-1 |}, \label{numberpairing}
\end{equation}
where $a$ runs through the elements of $\Sigma_0$, and $\langle\str,\str\rangle:V\times V^{\vee}\to\mathbb{R}$ is the natural pairing.

\begin{lem}[J.-L. Waldspurger]\label{replength}
The length $l({_{F'}w}^F)$ is equal to the number of elements of 
\[
\{\alpha\in\Sigma\backslash\Sigma(F)\; | \; \alpha(C)>0 \;\; \text{and} \;\; w(\alpha)(F')\leq 0\}.
\]
\end{lem}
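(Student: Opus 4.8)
The plan is to pass to the affine Weyl group, to express the length of a minimal coset representative as a count of affine hyperplanes separating two alcoves, and then to maximise this over the double coset; the same analysis will furnish the existence and uniqueness statements of Lemma~\ref{representants}. Using \eqref{semidecom} --- the complement $\pi_1(G)_I$ consists of length-zero elements, normalises $W_{\aff}$, stabilises $C$, and permutes the subgroups $\widetilde{W}_F$ and the subsets $\Sigma(F)$ of $\Sigma$ --- a short computation (noting $w(\alpha)(F')=w_{\aff}(\alpha)(\tau^{-1}F')$ for $w=\tau w_{\aff}$) reduces the statement, just as at the start of the proof of Lemma~\ref{representants}, to the case $\widetilde{W}=W_{\aff}$, after replacing $F'$ by its translate under the $\pi_1(G)_I$-component of $w$. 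So I may assume $\widetilde{W}$ is a Coxeter group.

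First I would compute, for $u\in W_{\aff}$ and a facet $F$ in $\overline{C}$, the length of the minimal length element $u^F$ of $u\widetilde{W}_F$. Since $l(u^F)=\min_{v\in\widetilde{W}_F}l(uv)$, by \eqref{number} this is the least gallery distance from $C$ to an alcove of the (finite) star of the facet $uF$. That least distance equals the number of affine hyperplanes separating $C$ from $uF$: each such hyperplane separates $C$ from every alcove of the star (all of which contain $uF$ in their closure), and the value is attained by the alcove containing a generic point near a relative interior point of $uF$ displaced slightly towards $C$, which is separated from $C$ by exactly those hyperplanes. This alcove is the unique minimiser --- two minimisers would lie on the same side of every hyperplane of the arrangement, hence coincide --- which proves Lemma~\ref{representants}(i), and
\[
l(u^F)\;=\;\#\{\gamma\in\Sigma\;|\;\gamma(C)>0,\ \gamma(uF)<0\}\;=\;\#\{\beta\in\Sigma\;|\;\beta(F)>0,\ \beta(u^{-1}C)<0\},
\]
the second equality by the substitution $\gamma=u(\beta)$ followed by $\beta\mapsto-\beta$.

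Now ${_{F'}w}^F$ has maximal length in $\{(vw)^F\mid v\in\widetilde{W}_{F'}\}$. Applying the formula above in its second form with $u=vw$, and using that $\widetilde{W}_{F'}$ permutes the star of $F'$ simply transitively --- so that $D:=w^{-1}v^{-1}C$ runs over all alcoves with $w^{-1}F'\subset\overline{D}$ --- gives
\[
l({_{F'}w}^F)\;=\;\max_{D}\ \#\{\beta\in\Sigma\;|\;\beta(F)>0,\ \beta(D)<0\}.
\]
For fixed $\beta$ with $\beta(F)>0$: if $\beta(w^{-1}F')<0$ then $\beta(D)<0$ for every such $D$; if $\beta(w^{-1}F')>0$ then $\beta(D)>0$ for every such $D$; and if $\beta\in\Sigma(w^{-1}F')$ then both signs of $\beta(D)$ occur as $D$ ranges over the star of $w^{-1}F'$. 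Hence the maximum is $\#\{\beta\mid\beta(F)>0,\ \beta(w^{-1}F')<0\}$ plus $\max_{D}\#\{\beta\in\Sigma(w^{-1}F')\mid\beta(F)>0,\ \beta(D)<0\}$. For the latter, pick relative interior points $p\in F$, $q\in w^{-1}F'$; as each $\beta\in\Sigma(w^{-1}F')$ vanishes on $w^{-1}F'$ one has $\beta(x)=\bar\beta(x-q)$ with $\bar\beta$ its linear part, so $\beta(F)>0$ becomes $\bar\beta(p-q)>0$, whence the whole set $\{\beta\in\Sigma(w^{-1}F')\mid\beta(F)>0\}$ lies in the open half space $\{\bar\beta(p-q)>0\}$. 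A generic point near $q-\varepsilon(p-q)$ (with $\varepsilon>0$ small) then lies in an alcove $D_0$ of the star of $w^{-1}F'$ --- for small $\varepsilon$ it avoids every hyperplane not through $w^{-1}F'$, since none passes through $q$ --- on which $\beta(D_0)=-\varepsilon\bar\beta(p-q)<0$ for all such $\beta$. So this last maximum equals $\#\{\beta\in\Sigma(w^{-1}F')\mid\beta(F)>0\}$, and therefore
\[
l({_{F'}w}^F)\;=\;\#\{\beta\in\Sigma\;|\;\beta(F)>0,\ \beta(w^{-1}F')\leq 0\}.
\]
To finish, $\{\beta\mid\beta(F)>0\}=(\Sigma\backslash\Sigma(F))\cap\{\beta\mid\beta(C)>0\}$ --- since $\beta(C)>0$ forces $\beta\geq0$ on $\overline{C}\supset\overline{F}$ --- and $\beta(w^{-1}F')\leq0$ is by definition the same as $w(\beta)(F')\leq0$; this is exactly the set counted in the statement.

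The uniqueness of ${_{F'}w}^F$, and hence Lemma~\ref{representants}(ii), falls out of the same picture: the alcoves $D_0$ realising the maximum form one orbit under the reflections in the roots of $\Sigma(w^{-1}F')\cap\Sigma(F)$ --- these fix $F$ pointwise, so preserve the counted set --- and such an orbit corresponds to a single right coset in $\widetilde{W}_{F'}w\widetilde{W}_F$. I expect the main obstacle to lie in the geometric core of the third paragraph: recognising the maximisation as one over the chambers of the finite root system $\Sigma(w^{-1}F')$ of the facet $w^{-1}F'$, producing a genuine alcove of the full affine arrangement that realises the maximum by moving ``away from $F$'', and tracking the uniqueness claim cleanly through these reductions.
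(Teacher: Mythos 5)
Your proof is correct, and although it follows the same two-step skeleton as the paper --- first a formula for $l(u^F)$, then a maximisation over $\widetilde{W}_{F'}$ in which the roots are sorted by their sign on $F'$ (equivalently on $w^{-1}F'$) --- the execution is genuinely different. The paper argues entirely with inversion sets and invokes two classical facts about the finite root systems $\Sigma(F)$ and $\Sigma(F')$: a unique element of $\widetilde{W}_F$ carries one Borel subset of $\Sigma(F)$ to another, and a Borel subset of $\Sigma(F')$ can be moved into the parabolic subset $P=L\cup U$ by an element unique modulo the Weyl group of the Levi $L$. You instead work in the apartment, reading $l$ as a separating-hyperplane count and computing the minimum over $u\widetilde{W}_F$ and the maximum over $\widetilde{W}_{F'}$ by exhibiting explicit extremal alcoves in the stars of $uF$ and of $w^{-1}F'$ (perturbing a relative interior point towards, resp.\ away from, $C$); your intermediate identity $l(u^F)=\#\{\gamma\in\Sigma\;|\;\gamma(C)>0,\ \gamma(uF)<0\}$ is exactly the paper's \eqref{rightmin} after the substitution you perform. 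What the paper's route buys: it works directly with the quasi-Coxeter length \eqref{number} on $\widetilde{W}$, so no reduction to $W_{\aff}$ is needed (your reduction via \eqref{semidecom}, with $F'$ replaced by $\tau^{-1}F'$, is correct but an extra step), and its parabolic-subset argument simultaneously identifies the maximisers as a single coset modulo $\widetilde{W}_{F'}\cap(w\widetilde{W}_Fw^{-1})$, which is precisely the uniqueness statement of Lemma \ref{representants}(ii). In your write-up that uniqueness is only sketched at the end, though the sketch does close up (two maximising alcoves differ by an element generated by reflections fixing $F$ pointwise, hence by an element of $\widetilde{W}_F$, so they yield the same coset $vw\widetilde{W}_F$ and the same $(vw)^F$), and in any case the length formula of Lemma \ref{replength} itself only requires the value of the maximum, which you establish completely. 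What your route buys is a more self-contained, geometric argument in which the Borel/parabolic-subset facts become visible statements about stars of facets and separating hyperplanes.
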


\begin{proof}
First, we show that the length $l(w^F)$ is equal to the number of elements of the set 
\begin{equation}\label{rightmin}
\{\alpha\in\Sigma\backslash\Sigma(F)\; | \; \alpha(C)>0 \;\; \text{and} \;\; w(\alpha)(C)< 0\}.
\end{equation}
Let $u\in \widetilde{W}_F$. The length $l(wu)$ is the number of elements of the set
\[
\{\alpha\in\Sigma\; | \; \alpha(C)>0 \;\; \text{and} \;\; wu(\alpha)(C)< 0\}.
\]
This set decomposes into a disjoint union $X(u)\sqcup Y(u)$ with
\begin{align*}
X(u) &=\{\alpha\in\Sigma\backslash\Sigma(F)\; | \; \alpha(C)>0 \;\; \text{and} \;\; wu(\alpha)(C)< 0\} \\
Y(u)&=\{\alpha\in\Sigma(F)\; | \; \alpha(C)>0 \;\; \text{and} \;\; wu(\alpha)(C)< 0\}.
\end{align*}
The map $\alpha\mapsto u(\alpha)$ preserves the positivity for $\alpha\in\Sigma\backslash\Sigma(F)$. Hence, $X(u)$ is independent of $u$ and equal to $X(1)$. Therefore, it is enough to prove that there is a unique $u\in\widetilde{W}_F$ such that $Y(u)=\varnothing$. This means that  $u$ maps the set $\{\alpha\in\Sigma(F)\;|\;\alpha(C)>0\}$ to the set $\{\alpha\in\Sigma(F)\;|\; w(\alpha)(C)>0\}$. But those sets are Borel subsets of $\Sigma(F)$ and it is well known that there exists a unique $u\in\widetilde{W}_F$ mapping the first set to the second. This proves \eqref{rightmin} and also Lemma \ref{representants} (i). \\
Now let $u\in\widetilde{W}_{F'}$. The length $l((uw)^F)$ is the number of elements of the set
\[
\{\alpha\in\Sigma\backslash\Sigma(F)\; | \; \alpha(C)>0 \;\; \text{and} \;\; uw(\alpha)(C)< 0\}.
\]
This set decomposes into a disjoint union $X(u)\sqcup Y(u)$ with
\begin{align*}
X(u) &=\{\alpha\in\Sigma\backslash\Sigma(F)\; | \; \alpha(C)>0, \;\; uw(\alpha)(C)< 0 \;\; \text{and} \;\; w(\alpha) \not \in \Sigma(F')\} \\
Y(u)&=\{\alpha\in\Sigma(F)\; | \; \alpha(C)>0, \;\; wu(\alpha)(C)< 0 \;\; \text{and} \;\; w(\alpha)\in \Sigma(F')\}.
\end{align*}
Again, the number of elements of $X(u)$ is independent of $u$ and equal to 
\[
\{\alpha\in\Sigma\backslash\Sigma(F)\; | \; \alpha(C)>0 \;\; \text{and} \;\; w(\alpha)(F')< 0\}.
\]
The set $Y(u)$ is contained in 
\[
Y=\{\alpha\in\Sigma\backslash\Sigma(F)\; | \; \alpha(C)>0 \;\; \text{and} \;\; w(\alpha)(F')= 0\}
\]
It is enough to prove that there exists a $u\in\widetilde{W}_{F'}$ with $Y(u)=Y$ and that those $u$ form a single right coset modulo $\widetilde{W}_{F'}\cap(w\widetilde{W}_{F}w^{-1})$. The set $Y(u)$ (resp. $Y$) is the image under the map $\beta\mapsto w^{-1}(\beta)$ of the set
\begin{align*}
& U(u)=\{\beta\in\Sigma(F')\;|\; w^{-1}(\beta)(F)<0 \;\;\text{and}\;\; u(\beta)(C)<0\} \\
& (\text{resp.} \quad U=\{\beta\in\Sigma(F')\;|\; w^{-1}(\beta)(F)<0\}).
\end{align*}
The condition $U(u)=U$ is equivalent to $u(U)\subset\Sigma(F')^-$, where $\Sigma(F')^-$ is the set $\Sigma(F')^-=\{\beta\in\Sigma(F')\;|\; \beta(C)<0\}$. This last set is a Borel subset of $\Sigma(F')$. Set
\[
L=\{\beta\in\Sigma(F')\;|\; w^{-1}(\beta)\in\Sigma(F)\}.
\]
Then
\[
P=L\cup U=\{\beta\in\Sigma(F')\;|\; w^{-1}(\beta)\leq 0\}
\]
is a parabolic subset of $\Sigma(F')$ with Levi subset $L$ and unipotent subset $U$. The inclusion $u(U)\subset\Sigma(F')^-$ is equivalent to $\Sigma(F')^-\subset u(P)$. It is well known that there is a $u\in\widetilde{W}_{F'}$ that realizes this inclusion and that it is unique modulo right multiplication with elements of the Weyl group associated to $L$. But this group is contained in the group $\widetilde{W}_{F'}\cap(w\widetilde{W}_{F}w^{-1})$. This proves the Lemma and also Lemma \ref{representants} (ii).
\end{proof}

\noindent For any $\mu\in X_*(T)_I$ denote by $\mu_{\anti}$ the unique \footnote{The uniqueness follows from the fact that $W_0$ acts trivial on the torsion elements in $X_*(T)_I$.} antidominant representative in $W_0\cdot \mu$.

\begin{cor}\label{specialreps}
There is the identification 
\[
\iweyl{\{x\}}{\{x\}}=\{e^{\mu} \;|\; \mu\in X_*(T)_I \;\;\;\text{and}\;\;\; \mu=\mu_{\anti} \},
\] 
and for $e^{\mu}\in\iweyl{\{x\}}{\{x\}}$, its length is given by
\[
l(e^{\mu})=|\langle\mu,2\rho\rangle|,
\]
where $\rho$ denotes the halfsum of the positive roots in $\Sigma_0$. \\
For $\lambda, \mu$ antidominant, $e^{\lambda}\leq e^{\mu}$ in the Bruhat order if and only if $\lambda\leq\mu$ in the antidominance order with respect to $\Sigma_0$.
\end{cor}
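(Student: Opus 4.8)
The plan is to treat the identification and the length formula together, and then the Bruhat‑order statement separately.

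\emph{Identification and length.} Since $\{x\}$ is special, $\widetilde{W}_{\{x\}}$ maps isomorphically onto $W_0$ and $\widetilde{W}=X_*(T)_I\rtimes W_0$ by \eqref{specialdecom}, so the double cosets correspond to the $W_0$-orbits in $X_*(T)_I$, each containing exactly one antidominant element $\mu_{\anti}$. Under the identification $\mathscr{A}=V$ sending $\{x\}$ to $0$ one has $\Sigma(\{x\})=\Sigma_0$, hence $\{\alpha\in\Sigma\backslash\Sigma(\{x\})\;|\;\alpha(C)>0\}=\{a+k\;|\;a\in\Sigma_0,\ k\geq 1\}$. I would pick a representative $w=e^{\nu}w_{\fin}$ with $\nu\in W_0\cdot\mu$ and compute $w(a+k)=w_{\fin}(a)+\bigl(k-\langle\nu,w_{\fin}(a)\rangle\bigr)$, so that the condition $w(a+k)(0)\leq 0$ of Lemma \ref{replength} becomes $\langle\nu,w_{\fin}(a)\rangle\geq k$. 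Counting then gives
\[
l({_{\{x\}}w}^{\{x\}})=\sum_{a\in\Sigma_0}\max\bigl(0,\langle\nu,w_{\fin}(a)\rangle\bigr)=\sum_{b\in\Sigma_0^+}|\langle\nu,b\rangle|=\sum_{b\in\Sigma_0^+}|\langle\mu_{\anti},b\rangle|=|\langle\mu_{\anti},2\rho\rangle|,
\]
using $W_0$-invariance of the middle sum and antidominance of $\mu_{\anti}$. On the other hand \eqref{numberpairing} applied to $e^{\mu_{\anti}}u$ for $u\in W_0$ gives $l(e^{\mu_{\anti}}u)=|\langle\mu_{\anti},2\rho\rangle|+l(u)$; hence $e^{\mu_{\anti}}$ is the unique element of minimal length in $e^{\mu_{\anti}}\widetilde{W}_{\{x\}}$, i.e. $e^{\mu_{\anti}}=(ve^{\mu})^{\{x\}}$ for any $v\in W_0$ with $v(\mu)=\mu_{\anti}$, and its length meets the maximum just computed. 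By the uniqueness in Lemma \ref{representants}(ii), ${_{\{x\}}w}^{\{x\}}=e^{\mu_{\anti}}$, which yields both $\iweyl{\{x\}}{\{x\}}=\{e^{\mu}\;|\;\mu=\mu_{\anti}\}$ and $l(e^{\mu})=|\langle\mu,2\rho\rangle|$.

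\emph{Bruhat order.} For the remaining equivalence I would first observe that $e^{\lambda}\leq e^{\mu}$ forces $e^{\lambda}$ and $e^{\mu}$ to lie over the same element of $\pi_1(G)_I$ under \eqref{semidecom}, i.e. $\lambda-\mu\in\ker\bigl(X_*(T)_I\to\pi_1(G)_I\bigr)=X_*(T)_I\cap W_{\aff}=Q^{\vee}(\Sigma_0)$; the same containment is clearly necessary for $\lambda\leq\mu$ in the antidominance order. Thus both sides take place inside a single $W_{\aff}$-coset, and the statement reduces to the classical one inside the genuine affine Coxeter group $W_{\aff}=Q^{\vee}(\Sigma_0)\rtimes W(\Sigma_0)$. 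The implication ``$\lambda\leq\mu$ in antidominance order $\Rightarrow e^{\lambda}\leq e^{\mu}$'' I would prove by induction on $\langle\lambda-\mu,\rho\rangle=\tfrac{1}{2}(l(e^{\mu})-l(e^{\lambda}))\geq 0$, passing from $\mu$ to a suitable antidominant coweight $\mu'$ with $\lambda\leq\mu'<\mu$, of strictly smaller coroot-distance to $\lambda$ and with $e^{\mu'}\leq e^{\mu}$; the last point is checked by writing out lengths via \eqref{numberpairing} and invoking the sub-word characterisation of the Bruhat order. For the converse I would use that $e^{\mu}$, being the minimal-length element of the double coset $W_0e^{\mu}W_0$, is $\leq$-minimal in it, and then compare translation parts using the root-theoretic length formula \eqref{number}.

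\emph{Main obstacle.} The first two assertions are essentially a bookkeeping consequence of Lemma \ref{replength}, the length formulas, and the uniqueness in Lemma \ref{representants}. The delicate part is the Bruhat-order equivalence, and within it the implication ``antidominance $\Rightarrow$ Bruhat'': the naive single-coroot induction steps leave the antidominant cone (and can all be simultaneously blocked), so one must allow composite steps and control carefully, through \eqref{numberpairing}, that the intermediate --- a priori non-antidominant --- elements remain comparable in the Bruhat order; alternatively one isolates the purely combinatorial fact about affine Weyl groups and invokes it, as is standard in the split case.
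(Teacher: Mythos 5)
Your proposal is correct and takes essentially the same route as the paper: the paper also gets the identification and the length formula from Waldspurger's root count (its formula \eqref{rightminspecial}, i.e.\ \eqref{rightmin} specialized to $\Sigma(\{x\})=\Sigma_0$, maximized over the $W_0$-orbit at $\mu_{\anti}$), which is the same bookkeeping you carry out via Lemma \ref{replength}, \eqref{numberpairing} and the uniqueness in Lemma \ref{representants}. For the Bruhat-order equivalence the paper gives no argument at all (``The last assertion is well-known''), so your concluding fallback of invoking the standard combinatorial fact for affine Weyl groups is exactly what the paper does, and your partial sketch of an induction, while not complete, goes beyond it rather than falling short.
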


\begin{proof}
Let $\mu\in X_*(T)_I$. Since $\Sigma(\{x\})=\Sigma_0$, we deduce from \eqref{rightmin} that
\begin{equation}\label{rightminspecial}
l((e^\mu)^{\{x\}})=\sum_{a(C)>0}|\langle\mu,a\rangle|\;\; -\;\; \#\{a\in\Sigma_0\;|\;a(C)>0 \;\;\text{and}\;\; \langle\mu,a\rangle> 0\}.
\end{equation}
This is maximal, for $\mu$ varying in its $W_0$-orbit, if $\mu=\mu_{\anti}$ is the unique antidominant element, and in this case
\[
l(e^\mu)=\sum_{a(C)>0}|\langle\mu,a\rangle|=|\langle\mu,2\rho\rangle|.
\] 
The last assertion is well-known.
\end{proof}

\begin{rem}
Formula \eqref{rightminspecial} is also given by Iwahori and Matsumoto in \cite[Prop. 1.25]{IwahoriMatsumoto}.
\end{rem}

\section{Schubert varieties}\label{schubert}
First we recall some facts on Schubert varieties in twisted affine flag varieties as given by G. Pappas and M. Rapoport in \cite{PRtwisted}. We then explain how to calculate the dimension and stratification in the case of a twisted affine Gra§mannian.

 We assume from now on that $K$ is of equal characteristic, i.e. $\chara(K)=\chara(k)$. We give the integers $\mathcal{O}_K$ the structure of a $k$-algebra. 
 
Let $G$ be a scheme over $K$. Then the functor $LG$ from the category of affine $k$-schemes to the category of sets is defined by
\[
LG(S)\define G(\Spec((R\:\widehat{\otimes}_k\,\mathcal{O}_K)\otimes_{\mathcal{O}_K}K)),
\]
for any affine $k$-scheme $S=\Spec(R)$. Here $R\:\widehat{\otimes}_k\,\mathcal{O}_K$ denotes the completed tensor product with respect to the $\mathfrak{m}_K$-adic topology. Likewise, for any scheme $\mathcal{G}$ over $\mathcal{O}_K$ one defines the functor $L^+\mathcal{G}$ as
\[
L^+\mathcal{G}(S)\define\mathcal{G}(\Spec(R\:\widehat{\otimes}_k\,\mathcal{O}_K)),
\]
for any affine $k$-scheme $S=\Spec(R)$. The functors $LG$ and $L^+\mathcal{G}$ give rise to sheaves in the fpqc-topology on the category of affine $k$-schemes. 

\begin{rem}
Choosing a uniformizer $t$ of $\mathcal{O}_K$, we obtain
\[
LG(S)=G(\Spec(R\rpot{t})) \qquad \text{and} \qquad L^+\mathcal{G}(S)=\mathcal{G}(\Spec(R\pot{t})),
\]
for any affine $k$-scheme $S=\Spec(R)$.
\end{rem}

If $G$ (resp. $\mathcal{G}$) is a group scheme over $K$ (resp. $\mathcal{O}_K$), then $LG$ (resp. $L^+\mathcal{G}$) is a sheaf of groups. Let $\mathcal{G}$ be a group scheme over $\mathcal{O}_K$. Then one defines $\mathcal{F}_\mathcal{G}$ as the fpqc-quotient 
\[
\mathcal{F}_\mathcal{G}\define L\mathcal{G}_\eta/L^+\mathcal{G},
\] 
where $\mathcal{G}_\eta$ denotes the generic fiber of $\mathcal{G}$ over $\mathcal{O}_K$. In other words, $\mathcal{F}_\mathcal{G}$ is the fpqc-sheaf on the category of affine $k$-schemes associated to the functor $S \mapsto L\mathcal{G}_\eta(S)/L^+\mathcal{G}(S)$ for any affine $k$-scheme $S$.

\begin{thm}[\cite{PRtwisted}, Thm. 1.4]\label{represent}                                                                   
\noindent\emph{(i)} If $G$ is affine of finite type over $K$, then $LG$ is representable by a strict ind-affine scheme over $k$. \\
\noindent\emph{(ii)} If $\mathcal{G}$ is affine of finite type over $\mathcal{O}_K$, then $L^+\mathcal{G}$ is representable by an affine scheme over $k$. \\
\noindent\emph{(iii)} If $\mathcal{G}$ is a smooth affine group scheme over $\mathcal{O}_K$, then $\mathcal{F}_\mathcal{G}$ is representable by an ind-scheme of ind-finite type over $k$ and the quotient map $L\mathcal{G}_\eta \to \mathcal{F}_\mathcal{G}$ admits sections \'etale-locally.
\end{thm}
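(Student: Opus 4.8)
The plan is to treat the three parts separately, (i) and (ii) being formal consequences of a closed embedding into affine space, while (iii) carries all the content. For (ii) I would choose a closed immersion $\mathcal{G}\hookrightarrow\mathbb{A}^N_{\mathcal{O}_K}$ of $\mathcal{O}_K$-schemes and observe that $L^+\mathbb{A}^1$ is represented by the affine $k$-scheme $\Spec k[x_0,x_1,x_2,\dots]$ (an $R$-point being the power series $\sum_{i\geq 0}x_it^i$); hence $L^+\mathbb{A}^N$ is affine, and $L^+\mathcal{G}$ is the closed subfunctor of it cut out by the vanishing of all $t$-adic coefficients of the defining equations of $\mathcal{G}$, so it is an affine $k$-scheme. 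For (i) I would embed $G\hookrightarrow\mathbb{A}^N_K$ as a closed subscheme; since $R\rpot{t}=\varinjlim_{n\geq 0}t^{-n}R\pot{t}$, the functor $L\mathbb{A}^1$ is the union of the affine schemes $\Spec k[x_{-n},x_{-n+1},\dots]$ along the closed immersions cutting out $\{x_{-n-1}=0\}$, hence a strict ind-affine scheme, and the same holds for $L\mathbb{A}^N$; then $LG$ is the closed subfunctor cut out at each level by the coefficient-wise vanishing of the defining equations of $G$, so $LG$ is strict ind-affine over $k$.

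For (iii) I would first settle the last assertion (\'etale-local sections), since it is used below. The $L^+\mathcal{G}$-action on $L\mathcal{G}_\eta$ is free, so by definition of the fpqc-quotient the projection $q\colon L\mathcal{G}_\eta\to\mathcal{F}_\mathcal{G}$ is an fpqc-torsor under $L^+\mathcal{G}$, and the claim is that it is already trivial \'etale-locally. The key input is smoothness, exploited through the d\'evissage $L^+\mathcal{G}=\varprojlim_n L^+_n\mathcal{G}$ with $L^+_n\mathcal{G}(R)=\mathcal{G}(R[t]/t^{n+1})$: since $\mathcal{O}_K=k\pot{t}$ and $\mathcal{G}$ is smooth affine of finite type, each $L^+_n\mathcal{G}$ is the Weil restriction along $k[t]/t^{n+1}\to k$ of a smooth affine group of finite type, hence is itself smooth affine of finite type over $k$, with $L^+_0\mathcal{G}=\mathcal{G}\otimes_{\mathcal{O}_K}k$ and each transition map $L^+_{n+1}\mathcal{G}\to L^+_n\mathcal{G}$ faithfully flat with kernel a vector group over $k$. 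Given an affine $k$-scheme $X=\Spec A$ and a point of $\mathcal{F}_\mathcal{G}(X)$, the obstruction to lifting it to $L\mathcal{G}_\eta(X)$ is an fpqc $L^+\mathcal{G}$-torsor $T$ on $X$; its pushout to an $L^+_0\mathcal{G}$-torsor becomes trivial over an \'etale cover of $X$ (torsors under smooth affine group schemes are \'etale-locally trivial), and from there I would lift the section step by step up the tower, the obstruction at the $n$-th stage being a torsor under a vector group over an affine scheme, hence trivial. Passing to the inverse limit trivializes $T$; this yields the \'etale-local sections and in particular identifies $\mathcal{F}_\mathcal{G}$ with the quotient sheaf for the \'etale topology.

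For representability in (iii) I would reduce to $\GL_N$. Choose a closed immersion of $\mathcal{O}_K$-group schemes $\mathcal{G}\hookrightarrow\GL_{N,\mathcal{O}_K}$ with quasi-affine quotient $\GL_N/\mathcal{G}$ over $\mathcal{O}_K$ (possible since $\mathcal{G}$ is smooth affine of finite type over the discrete valuation ring $\mathcal{O}_K$). By (i) the induced $L^+\mathcal{G}$-equivariant map $L\mathcal{G}_\eta\hookrightarrow L\GL_{N,K}$ is a closed immersion of strict ind-affine schemes, and by the \'etale-local triviality above it descends to a closed immersion of fpqc-sheaves $\mathcal{F}_\mathcal{G}=L\mathcal{G}_\eta/L^+\mathcal{G}\hookrightarrow Y:=L\GL_{N,K}/L^+\mathcal{G}$. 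The classical affine Gra\ss mannian $\mathrm{Gr}_N=L\GL_{N,K}/L^+\GL_{N,\mathcal{O}_K}$ is representable by an ind-projective $k$-scheme via the lattice description (bounded $R\pot{t}$-lattices in $R\rpot{t}^N$ form closed subschemes of products of ordinary Gra\ss mannians); and since $\mathcal{G}$ is smooth and $\GL_N/\mathcal{G}$ quasi-affine, one checks $L^+\GL_{N,\mathcal{O}_K}/L^+\mathcal{G}=L^+(\GL_N/\mathcal{G})$ is representable and quasi-affine, so $Y$ is, over $\mathrm{Gr}_N$, the bundle associated to the torsor $L\GL_{N,K}\to\mathrm{Gr}_N$ with this fibre, hence a representable ind-scheme; consequently the closed subfunctor $\mathcal{F}_\mathcal{G}$ is representable as well. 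It remains to see that $\mathcal{F}_\mathcal{G}$ is of \emph{ind-finite type}: I would write $\mathcal{F}_\mathcal{G}=\bigcup_n\mathcal{F}_\mathcal{G}^{\leq n}$ with $\mathcal{F}_\mathcal{G}^{\leq n}$ the preimage of the bounded locus $\mathrm{Gr}_N^{\leq n}$, and show, using $\mathcal{G}_\eta(R\rpot{t})\cap\GL_N(R\pot{t})=\mathcal{G}(R\pot{t})$ (flatness of $\mathcal{G}$) together with a bounded-denominator estimate, that each $\mathcal{F}_\mathcal{G}^{\leq n}$ is a scheme of finite type over $k$.

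I expect the main obstacle to be exactly this last point: although the ambient $Y$ carries infinite-dimensional fibre directions over $\mathrm{Gr}_N$ whenever $\mathcal{G}\neq\GL_N$, the closed subfunctor $\mathcal{F}_\mathcal{G}$ should involve only finitely many of them over each bounded locus, and making this boundedness precise --- equivalently, proving that each Schubert-type stratum is quasi-compact --- is where the real work lies. By contrast the \'etale-local section statement is a clean formal consequence of the smoothness d\'evissage, and parts (i)--(ii) are routine.
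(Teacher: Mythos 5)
This theorem is not proved in the paper at all: it is imported verbatim from Pappas--Rapoport \cite[Thm.\ 1.4]{PRtwisted}, so your proposal has to be measured against their argument. Your parts (i)--(ii) are the standard coefficient-wise description and are fine, and your d\'evissage proof of the \'etale-local sections (jet groups $L^+_n\mathcal{G}$ smooth affine of finite type, congruence kernels vector groups, torsors under vector groups trivial over affines, pass to the limit) is essentially the standard argument; the only point you gloss is the identification $T\cong\varprojlim_n T_n$ needed to assemble the compatible liftings into a section of $T$, which one checks after the fpqc base change $T\to X$.

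The genuine gap is in (iii), and it is the one you flag yourself: ``ind-finite type'' is part of the statement, and your proposal reduces it to an unspecified ``bounded-denominator estimate''. Your detour through $Y=L\GL_{N,K}/L^+\mathcal{G}$ cannot deliver it, because the fibres of $Y$ over $\mathrm{Gr}_N$ are the infinite-dimensional spaces $L^+(\GL_N/\mathcal{G})$, so a closed immersion $\mathcal{F}_\mathcal{G}\hookrightarrow Y$ gives ind-representability but no finiteness. The mechanism that closes this in \cite{PRtwisted} is to use the quasi-affineness of $\GL_N/\mathcal{G}$ at exactly this point: one shows that the monomorphism $\mathcal{F}_\mathcal{G}\to\mathcal{F}_{\GL_{N}}=\mathrm{Gr}_N$ is representable by a (quasi-compact) locally closed immersion, closed when $\GL_N/\mathcal{G}$ is affine. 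Concretely, \'etale-locally on $\mathrm{Gr}_N$ one chooses a section $s$ of $L\GL_N\to\mathrm{Gr}_N$; by the flatness identity $\mathcal{G}(R\pot{t})=\mathcal{G}_\eta(R\rpot{t})\cap\GL_N(R\pot{t})$ (which you correctly isolate), the preimage of $\mathcal{F}_\mathcal{G}$ is the locus where the composite of $s$ with $L\GL_N\to L(\GL_N/\mathcal{G})_\eta$ factors through $L^+(\GL_N/\mathcal{G})$; writing the quasi-affine quotient as an open subscheme of its affine hull, this factorization is a closed condition (integrality of the coordinates of the affine hull, only finitely many pole coefficients being relevant on each bounded stratum) intersected with an open one (the $t=0$ reduction must land in the open subscheme). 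Since $\mathrm{Gr}_N$ is ind-projective via lattices, this yields representability and ind-finite type in one stroke; without this step --- or a worked-out substitute for your boundedness estimate --- part (iii) is not proved.
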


Let $G$ be a connected reductive linear algebraic group over $K$.

\begin{definition}
Let $F$ be a facet of the building $\mathscr{B}(G,K)$ and denote by $\mathcal{P}_F$ the corresponding parahoric group scheme. The fpqc-sheaf 
\[
\mathcal{F}_F=\mathcal{F}_{\mathcal{P}_F}=LG/L^+\mathcal{P}_F
\]
is called the \emph{(twisted) affine flag variety}.\\ 
If $F$ is an alcove (all of these are conjugate), we call $\mathcal{F}_{\mathcal{P}_F}$ the \emph{(twisted) full flag variety}. \\
If $F$ is a special vertex, we call $\mathcal{F}_{\mathcal{P}_F}$ the \emph{(twisted) affine Gra§mannian associated to $F$}.
\end{definition}

\begin{rem}
By Pappas and Rapoport \cite[Thm. 5.1]{PRtwisted}, the Kottwitz morphism gives rise to a locally constant surjective morphism of ind-group schemes over $k$ 
\[
\kappa_G: LG \longrightarrow \underline{\pi_1(G)_I},
\]
where $\underline{\pi_1(G)_I}$ denotes the constant group scheme associated to $\pi_1(G)_I$. In particular, the $k$-points of the neutral component  $(LG)^0(k)$ are equal to $G_1$ (cf. \eqref{doublesystem}).
\end{rem}

Fix a parahoric subgroup $P'=P_{F'}$ (resp. $P=P_{F}$) given by a facet $F'$ (resp. $F$) of $\mathscr{B}=\mathscr{B}(G,K)$, and denote by $\mathcal{P}'$ (resp. $\mathcal{P}$) the corresponding parahoric group scheme. We choose the maximal $K$-split torus $S$ such that $F'$ and $F$ are contained in the corresponding apartment $\mathscr{A}=\mathscr{A}(G,S,K)$.

\begin{definition}
Let $w$ be an element of the Iwahori-Weyl group $\widetilde{W}=\widetilde{W}(G,S)$.\\
\noindent \emph{(i)} The \emph{$(P',P)$-Schubert cell} $C_w=C_w(P',P)$ is the reduced subscheme 
\[
L^+\mathcal{P}'\cdot n_w \subset LG/L^+\mathcal{P}=\mathcal{F}_\mathcal{P},
\]
where $n_w$ denotes any representative in $N(K)$.\\
\noindent\emph{(ii)} The \emph{$(P',P)$-Schubert variety} $S_w=S_w(P',P)$ is the reduced scheme with underlying set the Zariski closure of $C_w$.
\end{definition}

\begin{rem}
The definition of $C_w$ and hence of $S_w$ is independent of the chosen representative $n_w$ of $w$. The Schubert cell $C_w$ is an irreducible smooth variety over $k$, since the $L^+\mathcal{P}'$-action on $C_w$ factors through a finite-dimensional irreducible smooth quotient of $L^+\mathcal{P}'$. Note that in general $C_w$ is not a topological cell, when $P'$ is not an Iwahori subgroup. The Schubert variety $S_w$ is an irreducible projective variety, and $C_w$ is an open dense subset in $S_w$. However, $S_w$ is not smooth in general, and there arise interesting singularities along the boundaries of its stratification in $L^+\mathcal{P}'$-orbits.
\end{rem}

\begin{thm}[\cite{PRtwisted}, Thm. 8.4]\label{normality}
Suppose that $G$ splits over a tamely ramified extension of $K$ and that the order of the fundamental group of the derived group $\pi_1(G_{\drv})$ is prime to the characteristic of $k$. Then for any $w\in\widetilde{W}$ the Schubert variety $S_w$ is normal, Frobenius-split (when $\chara(k)>0$) and has rational singularities.
\end{thm}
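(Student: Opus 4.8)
The plan is to adapt the Frobenius splitting method for Schubert varieties in Kac--Moody flag varieties (Mehta--Ramanathan, Ramanan--Ramanathan, Mehta--van der Kallen) to the twisted affine setting, using the Demazure resolution alluded to in Theorem \ref{resolutionthm}. First I would reduce to the case $P=B$ an Iwahori subgroup: if $\pi\colon\mathcal{F}_B\to\mathcal{F}_{\mathcal P}$ denotes the natural projection, it is a Zariski-locally trivial fibration whose fibre is the flag variety of the reductive quotient of the special fibre of $\mathcal P$, and over a Schubert variety $S_w(P',P)$ it restricts to a fibration $S_{\widetilde w}(P',B)\to S_w(P',P)$ with the same smooth projective fibre; since that fibre is normal, Frobenius split and has rational singularities with vanishing higher cohomology of its structure sheaf, each of the three properties descends from $S_{\widetilde w}(P',B)$ to $S_w(P',P)$ (normality by faithfully flat descent, Frobenius splitting by pushing the splitting forward, rational singularities by descent along a smooth surjection). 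Using the action of $\Omega\cong\pi_1(G)_I$ on $\mathcal{F}_B$ by automorphisms permuting components and Schubert varieties, I would further reduce to $w\in W_{\aff}$.

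Next, for $w\in W_{\aff}$ fix a reduced word $w=s_{i_1}\cdots s_{i_n}$ and let $\mathcal{P}_{i}$ be the parahoric group scheme of the wall of $C$ fixed by $s_i$, so that $L^+\mathcal{P}_{i}/L^+\mathcal{B}\cong\mathbb{P}^1$. The Bott--Samelson variety $D_w=L^+\mathcal{P}_{i_1}\times^{L^+\mathcal{B}}\cdots\times^{L^+\mathcal{B}}L^+\mathcal{P}_{i_n}/L^+\mathcal{B}$ is then an iterated $\mathbb{P}^1$-bundle, hence smooth and projective of dimension $n=l(w)$, and the multiplication morphism $m_w\colon D_w\to\mathcal{F}_B$ factors through $S_w$ and is proper and birational onto $S_w$ (Theorem \ref{resolutionthm}, \cite[\S 8]{PRtwisted}). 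I would then prove, by induction along the tower of $\mathbb{P}^1$-bundles, that $D_w$ is Frobenius split compatibly with its boundary divisor $\partial D_w$ (the union of the sub-Bott--Samelson varieties obtained by deleting one letter): the base case is the classical compatible splitting of $\mathbb{P}^1$ along two points, and the inductive step is the Mehta--Ramanathan lemma on compatibly split divisors in $\mathbb{P}^1$-bundles, for which one identifies $\omega_{D_w}^{-1}$ with $\mathcal{O}_{D_w}(\partial D_w)$ twisted by a base-point-free line bundle pulled back along $m_w$ and exhibits a section cutting out $\partial D_w$ plus an effective divisor that provides the splitting.

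Granting the compatible splitting of $D_w$, normality of $S_w$ follows by induction on $l(w)$: writing $w=w's$ with $l(w)=l(w')+1$, the variety $S_w$ is a $\mathbb{P}^1$-fibration over $S_{w'}$, and the vanishing $H^i(S_{w'},\mathcal{L})=0$ for $i>0$ and $\mathcal{L}$ ample (a formal consequence of the splitting) forces $S_w$ to be normal, the affine analogue of the Ramanan--Ramanathan/Mehta--van der Kallen argument. Once $S_w$ is normal, $m_w$ has connected fibres by Zariski's main theorem, so $(m_w)_*\mathcal{O}_{D_w}=\mathcal{O}_{S_w}$, and pushing the splitting of $D_w$ forward along $m_w$ yields a Frobenius splitting of $S_w$ compatible with every $S_v\subset S_w$. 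Finally, the compatible splitting of $D_w$ also gives $R^i(m_w)_*\mathcal{O}_{D_w}=0$ for $i>0$ (the characteristic-$p$ substitute for Grauert--Riemenschneider; in characteristic $0$ one uses that $m_w$ is a genuine resolution, or reduces mod $p$), and together with $(m_w)_*\mathcal{O}_{D_w}=\mathcal{O}_{S_w}$ this says that $S_w$ has rational singularities.

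The main obstacle is not any individual step — each is classical in the split or Kac--Moody case — but the verification that the \emph{twisted} parahoric group schemes $\mathcal{P}_F$ and their special fibres behave as in that setting: that $L^+\mathcal{P}_i/L^+\mathcal{B}\cong\mathbb{P}^1$, that the reductive quotients governing the fibres of $\pi$ are smooth with reduced flag varieties admitting compatible Frobenius splittings, and that the line bundles on $\mathcal{F}_B$ needed to run the splitting machinery exist with the correct powers. This is exactly where the hypotheses enter: tameness of the splitting field of $G$ makes the structure theory of \cite{PRtwisted} (built on \cite{BT2}) available, and $p\nmid\#\pi_1(G_{\drv})$ rules out infinitesimal phenomena in the isogeny $G_{\scon}\to G_{\drv}$ that would otherwise obstruct reducedness of the relevant flag varieties and the transfer of results from the simply connected case.
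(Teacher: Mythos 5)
Note first that the paper itself contains no proof of this statement: Theorem \ref{normality} is imported verbatim from \cite{PRtwisted}, Thm.~8.4, and is only \emph{used} here (e.g.\ in Corollary \ref{locmodred}), so the relevant comparison is with the proof in that source. In broad outline your sketch follows the same strategy as the cited proof (which in turn follows Faltings' method from the split case): Demazure/Bott--Samelson resolutions, compatible Frobenius splitting in positive characteristic, passage between full and partial affine flag varieties, and reduction mod $p$ to handle residue characteristic zero. So the route is the expected one; the problems are in the steps you treat as routine.

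Concretely: (1) the hypothesis that the order of $\pi_1(G_{\drv})$ is prime to $\chara(k)$ does no work in your argument. In the cited proof it enters, together with tameness, through the comparison of $\mathcal{F}_G$ with the affine flag variety of the simply connected cover $G_{\scon}$ and the reducedness results of \cite{PRtwisted}; the splitting machinery --- in particular the identification of $\omega_{D_w}^{-1}$, which presupposes the computation of the Picard group (central charge) of the \emph{twisted} affine flag variety --- is established for $G_{\scon}$ and then transferred. Your closing paragraph names exactly this as ``the main obstacle'' but supplies no argument, so as written your proof covers at best the simply connected case. (2) The normality induction is misstated: for $w=w's$ with $l(w)=l(w')+1$, $S_w$ is \emph{not} a $\mathbb{P}^1$-fibration over $S_{w'}$; rather $L^+\mathcal{P}_s\times^{L^+\mathcal{B}}S_{w'}\to S_w$ is proper and birational, equivalently $S_w$ is a $\mathbb{P}^1$-bundle over the image of $S_{w'}$ in the partial affine flag variety attached to the wall fixed by $s$, and the crux is to prove $m_*\mathcal{O}=\mathcal{O}_{S_w}$; invoking Zariski's main theorem for connectedness of fibres before normality of $S_w$ is known is circular. (3) In residue characteristic $0$ (allowed here, since only $\chara(K)=\chara(k)$ is assumed) the parenthetical ``one uses that $m_w$ is a genuine resolution'' is empty: $R^i(m_w)_*\mathcal{O}_{D_w}=0$ \emph{is} the assertion of rational singularities and does not follow from birationality; the cited source obtains the characteristic-zero case by reduction modulo $p$. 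With these three points repaired, your sketch becomes, in essence, the proof of \cite{PRtwisted}.
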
 

Let $C$ be an alcove of $\mathscr{A}$, and assume that $F'$ and $F$ are contained in its closure. Recall that we have a semidirect product decomposition $\widetilde{W}=\pi_1(G)_I\ltimes W_{\aff}$. Denote by $l$ the length function of $\widetilde{W}$ with respect to $C$.

\begin{prop} \label{enumeratecells}
Let $w\in\iweyl{F'}{F}$. \\
\emph{(i)} The Schubert variety $S_w=S_w(P',P)$ is set theoretically the disjoint union of locally closed subvarieties
\[
S_w=\coprod_{\substack{v\,\in\,\iweyl{F'}{F} \\ v \,\leq \, w}} C_v(P',P).
\] 
\emph{(ii)} The dimension of $S_w$ is $l(w)$.
\end{prop}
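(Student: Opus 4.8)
The plan is to reduce everything to the known combinatorics of the Iwahori-Weyl group and the already-available case of Iwahori level, i.e.\ the case $P' = P = B$. First I would treat the case where $P' = B$ is the Iwahori, so that $\iweyl{C}{F}$ consists of the minimal-length representatives $w^F$ in the cosets $w\widetilde{W}_F$. Here the multiplication map $L^+B \cdot n_w L^+\mathcal{P}_F / L^+\mathcal{P}_F \subset \mathcal{F}_{\mathcal{P}_F}$ can be analyzed via the projection $\mathcal{F}_B \to \mathcal{F}_{\mathcal{P}_F}$, which is a Zariski-locally trivial fibration with fibers the (finite-dimensional) partial flag variety $L^+\mathcal{P}_F / L^+ B$ of dimension $l(w_F)$, where $w_F$ is the longest element of $\widetilde{W}_F$. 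The preimage of $S_w(B,\mathcal{P}_F)$ is the Iwahori-Schubert variety $S_{w w_F}(B,B)$, for which statements (i) and (ii) are the classical facts recalled in \cite[\S 8]{PRtwisted} (stratification by Bruhat order on $W_{\aff}$, dimension $= l$). Since $w \in \iweyl{C}{F}$ means $w = w^F$, one has $l(w w_F) = l(w) + l(w_F)$, and projecting the stratification down and subtracting the fiber dimension gives both claims at Iwahori-to-parahoric level; one uses that $v \mapsto v^F$ is order-preserving and that the strata $C_v(B,\mathcal{P}_F)$ for $v \in \iweyl{C}{F}$ are exactly the images of the Iwahori strata.

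Next I would pass from $P' = B$ to a general parahoric $P'$. Here the key point is the surjection $\mathcal{F}_B \twoheadrightarrow \mathcal{F}_{\mathcal{P}_F}$ is irrelevant; instead I use that $C_w(P',\mathcal{P}) = L^+\mathcal{P}' \cdot n_w L^+\mathcal{P}/L^+\mathcal{P}$ is the image of $C_w(B,\mathcal{P})$ under the action of $L^+\mathcal{P}'$, so that $S_w(P',\mathcal{P})$ is a union of Iwahori-to-parahoric Schubert varieties, namely $S_w(P',\mathcal{P}) = \bigcup_{u \in \widetilde{W}_{F'}} S_{(uw)^F}(B,\mathcal{P})$ as sets. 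By Lemma~\ref{representants}(ii) the element $\iweyl{F'}{F}$-representative ${_{F'}w}^F$ is the one of maximal length among the $(uw)^F$, and by the closure relations at Iwahori level all the other $S_{(uw)^F}(B,\mathcal{P})$ are contained in $S_{{_{F'}w}^F}(B,\mathcal{P})$; hence $S_w(P',\mathcal{P}) = S_{{_{F'}w}^F}(B,\mathcal{P})$ as a set, and in particular $\dim S_w(P',\mathcal{P}) = l({_{F'}w}^F) = l(w)$, giving (ii). For (i), I intersect the Iwahori-level stratification $S_{{_{F'}w}^F}(B,\mathcal{P}) = \coprod_{v' \le {_{F'}w}^F} C_{v'}(B,\mathcal{P})$ with the $L^+\mathcal{P}'$-orbits; each $L^+\mathcal{P}'$-orbit $C_v(P',\mathcal{P})$ with $v \in \iweyl{F'}{F}$ is the union of those $C_{v'}(B,\mathcal{P})$ with $(v')^{\iweyl{F'}{F}} = v$, and one checks using Lemma~\ref{replength} (the length formula via affine roots) together with the compatibility of $\le$ with the projection $\widetilde{W} \to \iweyl{F'}{F}\backslash\widetilde{W}/\cdots$ that $C_v(P',\mathcal{P}) \subset S_w(P',\mathcal{P})$ if and only if $v \le w$ in the induced order on $\iweyl{F'}{F}$. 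This yields the disjoint decomposition in (i).

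The main obstacle I anticipate is the bookkeeping in the second step: proving that the closure relations on double cosets are governed by the Bruhat order restricted to the distinguished representatives $\iweyl{F'}{F}$, i.e.\ that $v \le w$ in $\widetilde W$ for $v,w \in \iweyl{F'}{F}$ is equivalent to $C_v(P',\mathcal P)\subset S_w(P',\mathcal P)$. One inclusion is formal from $S_w(B,\mathcal P)\supset C_v(B,\mathcal P)$; the converse requires knowing that if $C_{v'}(B,\mathcal P)\subset S_{w'}(B,\mathcal P)$ with $w' = {_{F'}w}^F$, then the $\iweyl{F'}{F}$-representative of $v'$ is still $\le w$, which is exactly the statement that taking maximal/minimal coset representatives is monotone for the Bruhat order — a purely Coxeter-theoretic fact that I would extract from Lemma~\ref{replength} and its proof (the disjointness of the $X(u)$-parts there already encodes the needed monotonicity). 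A secondary technical point is checking that the individual $L^+\mathcal P'$-orbits $C_v(P',\mathcal P)$ are locally closed and irreducible of dimension $l(v)$ for $v\in\iweyl{F'}{F}$; this follows since the $L^+\mathcal P'$-action on $C_v$ factors through a finite-dimensional smooth quotient and $C_v$ is the image of the irreducible $C_{v'}(B,\mathcal P)$ of top dimension $l(v'){=}l(v)$ among those mapping to it.
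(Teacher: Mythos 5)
Your argument is correct in substance and is built from the same ingredients as the paper's proof --- the Iwahori--Iwahori closure relations from \cite{PRtwisted} via the Demazure resolution, the projection between the full and the partial affine flag variety, the Tits-system decomposition of a parahoric into Iwahori double cosets together with length additivity for minimal representatives, and the fibre dimension of $L^+\mathcal{P}_F/L^+\mathcal{B}$ (Remark \ref{reductivequotient}) --- but it performs the two reductions in the opposite order. The paper first treats $P=B$ with $P'$ arbitrary: there only single cosets $\widetilde{W}_{F'}\cdot{^{F'}\!w}$ occur, lengths are additive in such a coset, so every element lies below the maximal representative by the subword property, and (i) follows by regrouping the Iwahori strata according to Lemma \ref{doubleclasses} and \eqref{representatives}; it then passes to general $P$ geometrically via $p:\mathcal{F}_C\to\mathcal{F}_F$, computing $\dim C_w(B,P)$ from $p^{-1}(C_w^B)=\coprod_{u\in\widetilde{W}_F}C_{wu}(B,B)$. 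You instead do the right-hand reduction first (your $P'=B$ step, which is fine and parallel to the paper's second step) and then absorb a general $P'$ set-theoretically, writing $S_w(P',P)=\bigcup_{u\in\widetilde{W}_{F'}}S_{(uw)^F}(B,P)=S_{{_{F'}w}^F}(B,P)$ and reading off the dimension from Lemma \ref{representants}(ii). This buys a purely combinatorial dimension count, but at the cost of a genuinely stronger order-theoretic input: you need $(uw)^F\le {_{F'}w}^F$ for all $u\in\widetilde{W}_{F'}$, and, for the stratification, that the passage to distinguished (double) coset representatives is monotone for the Bruhat order.

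The one point I would not let stand as written is your suggestion that this monotonicity can be extracted from Lemma \ref{replength} and the disjointness of the sets $X(u)$ in its proof: that lemma is purely a length statement and does not control the Bruhat order. The facts you need are nonetheless standard Coxeter combinatorics: $y\mapsto y^F$ is order preserving; since $\widetilde{W}_{F'}$ and $\widetilde{W}_F$ are finite, the double coset $\widetilde{W}_{F'}w\widetilde{W}_F$ has a unique maximal element $m$, taking it is monotone, and $m^F={_{F'}w}^F$; then $uw\le m$ gives $(uw)^F\le {_{F'}w}^F$. Quote these (e.g. from \cite{BLak} or Deodhar's results on parabolic quotients) or prove them via the lifting/subword property rather than via the root-counting lemma; note that the paper's ordering of the reductions avoids exactly this subtlety, since in its first step only single cosets with additive lengths appear. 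With that substitution your proof goes through and is a legitimate alternative to the paper's.
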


\begin{proof} 
We may assume after translation with some element of $\pi_1(G)_I$ that $S_w$ is contained in the neutral component $(\mathcal{F}_F)^0$ and so $w\in W_{\aff}$. Let $B=P_C$ be the Iwahori subgroup corresponding to the alcove $C$. Since $(LG)^0(k)=\ker(\kappa_G)=G_1$, the $k$-points $S_w(k)$ coincide with a $(P',P)$-Schubert variety for the double Tits-system $(G_1,B,N_1,\mathbb{S})$ (cf. \eqref{doublesystem}). \\
First assume that $P=B$ is the Iwahori subgroup. Denote the corresponding Iwahori group scheme by $\mathcal{B}$. Using the Demazure resolution in \cite[Prop. 9.6 (b)]{PRtwisted}, we see that the closure of an $L^+\mathcal{B}$-orbit is the disjoint union of the $L^+\mathcal{B}$-orbits for elements which are smaller than $w$ with respect to the Bruhat order. By Lemma \ref{doubleclasses} and Equation \eqref{representatives}, this implies (i). Using the Demazure resolution again, we see that the dimension of an $L^+\mathcal{B}$-orbit is exactly the length of the corresponding element in $W_{\aff}$. Hence, the $L^+\mathcal{B}$-orbit of $w$ is the unique $L^+\mathcal{B}$-orbit of maximal dimension in $S_w$. So we have $\dim(S_w)=l(w)$. This proves (ii). \\
If $P$ is not neccessarily an Iwahori subgroup, we consider the projection $p:\mathcal{F}_C\to\mathcal{F}_F$ from the full flag variety to the partial flag variety. Then $p^{-1}(S_w)$ is a $(P',B)$-Schubert variety and its projection onto $\mathcal{F}_F$ is $S_w$. This implies (i). Since $C_w^B=C_w(B,P)$ is open in $S_w$, it is enough to determine $\dim(C_w^B)$. We have a smooth, surjective morphism 
\[
p\vert_{p^{-1}(C_w^B)}: p^{-1}(C_w^B)\longrightarrow C_w^B
\]
and so $\dim(C_w^B)=\dim(p^{-1}(C_w^B))-\dim(p^{-1}(w))$. We show that there is set theoretically a disjoint union
\begin{equation}
p^{-1}(C_w^B)=\coprod_{u\in\widetilde{W}_F} C_{wu}(B,B). \label{claim}
\end{equation}
But $P=\coprod_{u\in\widetilde{W}_F}BuB$ by general properties of Tits-systems, and hence 
\[
BwP=\coprod_{u\in\widetilde{W}_F}BwBuB.
\]
Since $w=w^F$, we have $l(wu)=l(w)+l(u)$ for all $u\in\widetilde{W}_F$, and therefore $BwBuB=BwuB$. This proves \eqref{claim}. \\
By the Iwahori case we have
\[
\dim(p^{-1}(C_w^B))=\underset{u\in\widetilde{W}_F}{\Max}l(wu).
\]
But $l(wu)=l(w)+l(u)$ and the right hand side is equal to $l(w)+l(w_0)$, where $w_0$ is the longest element in $\widetilde{W}_F$. Using Remark \ref{reductivequotient} (i) below we get 
\[
\dim(p^{-1}(w))=\dim(L^+\mathcal{P}/L^+\mathcal{B})=l(w_0),
\]
and we are done.
\end{proof}

\begin{rem} \label{reductivequotient}
(i) Let $\bar{P}_F^{\red}$ be the maximal reductive quotient of the special fiber of $\mathcal{P}_F$. The maximal $K$-split torus $S$ has a natural $\mathcal{O}_K$-structure, and its special fiber $\bar{S}$ is a maximal torus of $\bar{P}_F^{\red}$ (cf. \cite[3.5]{Ti}). The group $\widetilde{W}_F$ can be identified with the relative Weyl group of $\bar{P}_F^{\red}$ with respect to the maximal torus $\bar{S}$ (cf. \cite[3.5.1]{Ti}). Let $f$ be a facet whose closure contains $F$. Consider the special fiber of the induced morphism $\mathcal{P}_f\to\mathcal{P}_F$. The image in $\bar{P}_F^{\red}$ is a parabolic subgroup $\bar{P}_f'$ and reduction mod $\mathfrak{m}_K$ gives an isomorphism
\[
L^+\mathcal{P}_F/L^+\mathcal{P}_f \overset{\cong}{\longrightarrow} \bar{P}_F^{\red}/\bar{P}_f'.
\]
Hence, $L^+\mathcal{P}_F/L^+\mathcal{P}_f$ is a homogeneous space and its dimension is given by the length of the longest element in $(\widetilde{W}_F)^f$, where $(\widetilde{W}_F)^f$ are the minimal length representatives of $\widetilde{W}_F/\widetilde{W}_f$ in $\widetilde{W}_F$.\\
(ii) Lemma \ref{doubleclasses} and some geometric arguments similar to those in the proof of Proposition \ref{enumeratecells} may be used to obtain another proof of Lemma \ref{representants}.
\end{rem}

If $F'=F=\{x\}$ is a special vertex, then $\iweyl{\{x\}}{\{x\}}=X_*(T)_I^-$ by Corollary \ref{specialreps} are the antidominant coweights. Then Proposition \ref{enumeratecells} implies:

\begin{cor}\label{specialschubert}
Let $\mu\in X_*(T)_I^-$ be a antidominant coweight. \\
\emph{(i)} The $(P_{\{x\}},P_{\{x\}})$-Schubert variety $S_\mu$ is set theoretically the disjoint union of locally closed subvarieties
\[
S_\mu=\coprod_{\substack{\lambda\,\in\,X_*(T)_I^- \\ \lambda \,\leq \, \mu}} C_\lambda(P_{\{x\}},P_{\{x\}}),
\] 
where $\lambda\leq\mu$ in the antidominance order.\smallskip \\
\emph{(ii)} The dimension of $S_\mu$ is $|\langle\mu,2\rho\rangle|$.
\end{cor}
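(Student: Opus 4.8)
The plan is to deduce Corollary \ref{specialschubert} directly from Proposition \ref{enumeratecells} by specializing $F'=F=\{x\}$ and translating the combinatorial data through Corollary \ref{specialreps}. First I would observe that in the special-vertex case the bijection \eqref{representatives} identifies $\iweyl{\{x\}}{\{x\}}$ with $\widetilde{W}_{\{x\}}\backslash\widetilde{W}/\widetilde{W}_{\{x\}}$, and by Corollary \ref{specialreps} the set $\iweyl{\{x\}}{\{x\}}$ consists precisely of the translation elements $e^{\mu}$ with $\mu\in X_*(T)_I^-$ antidominant; equivalently, the $(P_{\{x\}},P_{\{x\}})$-double cosets in $G(K)$ are enumerated by the $W_0$-orbits in $X_*(T)_I$, and the antidominant $\mu$ is the canonical representative. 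This sets up the notational dictionary $S_w \leftrightarrow S_\mu$, $C_v \leftrightarrow C_\lambda$, where $w=e^\mu$, $v=e^\lambda$.

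Next I would transport the two assertions of Proposition \ref{enumeratecells}. For (i): the proposition says $S_w=\coprod_{v\in\iweyl{F'}{F},\,v\leq w}C_v$; under the dictionary this becomes $S_\mu=\coprod_{\lambda}C_\lambda$ where $\lambda$ ranges over antidominant coweights with $e^\lambda\leq e^\mu$ in the Bruhat order on $\widetilde{W}$. By the last assertion of Corollary \ref{specialreps}, $e^\lambda\leq e^\mu$ in the Bruhat order is equivalent to $\lambda\leq\mu$ in the antidominance order with respect to $\Sigma_0$, so the index set is exactly $\{\lambda\in X_*(T)_I^-\mid\lambda\leq\mu\}$, giving (i) verbatim. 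For (ii): Proposition \ref{enumeratecells}(ii) gives $\dim(S_w)=l(w)=l(e^\mu)$, and the length formula in Corollary \ref{specialreps} states $l(e^\mu)=|\langle\mu,2\rho\rangle|$, which is precisely the claimed dimension.

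I do not expect a genuine obstacle here: the corollary is a formal consequence of results already established, and the proof is essentially a bookkeeping exercise of matching the generic statement to the special case. The only point requiring a modicum of care is to make sure the Bruhat order on $\widetilde{W}$ used in Proposition \ref{enumeratecells} (coming from the quasi-Coxeter structure via \eqref{semidecom}) is the same one whose restriction to translation elements is computed in Corollary \ref{specialreps} — but since both are built from the fixed alcove $C$ and the same special vertex $x$, this is automatic. I would simply write: "This is an immediate consequence of Proposition \ref{enumeratecells} and Corollary \ref{specialreps}," possibly expanding the Bruhat-order translation in one sentence, and declare the proof complete.

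\begin{proof}
By Corollary \ref{specialreps} we have $\iweyl{\{x\}}{\{x\}}=\{e^{\mu}\mid\mu\in X_*(T)_I^-\}$, and for $\lambda,\mu\in X_*(T)_I^-$ the relation $e^{\lambda}\leq e^{\mu}$ in the Bruhat order of $\widetilde{W}$ holds if and only if $\lambda\leq\mu$ in the antidominance order with respect to $\Sigma_0$. Substituting this dictionary into Proposition \ref{enumeratecells}(i) yields (i). For (ii), Proposition \ref{enumeratecells}(ii) gives $\dim(S_{\mu})=l(e^{\mu})$, and by the length formula in Corollary \ref{specialreps} this equals $|\langle\mu,2\rho\rangle|$.
\end{proof}
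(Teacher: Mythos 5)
Your proposal is correct and matches the paper's own argument exactly: the paper likewise obtains Corollary \ref{specialschubert} by specializing Proposition \ref{enumeratecells} to $F'=F=\{x\}$ and invoking Corollary \ref{specialreps} for the identification of $\iweyl{\{x\}}{\{x\}}$ with the antidominant coweights, the comparison of the Bruhat and antidominance orders, and the length formula $l(e^{\mu})=|\langle\mu,2\rho\rangle|$. No gaps; your bookkeeping is exactly the intended proof.
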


\section{Equivariant resolution of Schubert varieties}
We explain the construction of an equivariant affine Demazure resolution in twisted affine flag varieties. This is analogous to the construction in finite dimensional flag varieties explained by S. Billey and V. Lakshmibai in \cite[Ch. 9.1]{BLak}. We thank N. Perrin for explaining a proof to us and his permission to reproduce it. 

Fix an alcove $C$ in the standard apartment $\mathscr{A}=\mathscr{A}(G,S,K)$ of the connected reductive group $G$ over $K$ with fixed maximal $K$-split torus $S$. Let $\mathcal{B}=\mathcal{P}_C$ be the corresponding parahoric group scheme and $B=\mathcal{B}(\mathcal{O}_K)$ the parahoric subgroup. We refer to parahoric subgroups of $G(K)$ containing $B$ as \emph{standard} parahoric subgroups.\\
\noindent Let $\widetilde{W}=\pi_1(G)_I\ltimes W_{\aff}$ be the Iwahori-Weyl group and denote by $\mathbb{S}$ the set of simple reflections at the walls of $C$. Recall (cf. \eqref{doublesystem}) that the quadruple $(G_1,B,N_1,\mathbb{S})$ is a double Tits-system with affine Weyl group $W_{\aff}$. For any facet $F$ contained in the closure of $C$, the group $\widetilde{W}_F$ is a parabolic subgroup of the Coxeter-system $(W_{\aff},\mathbb{S})$ which is generated by the simple reflections $\mathbb{S}_F$ at the walls of $C$ containing $F$. Let $\widetilde{W}^F$ be the set of minimal length representatives of $\widetilde{W}/\widetilde{W}_F$, i.e.
\[
\widetilde{W}^F\define\{w\in\widetilde{W}\;|\;w=w^F\}=\{w\in\widetilde{W}\;|\; l(ws)=l(w)+1\;\;\forall\;s\in\mathbb{S}_F\}.
\]
Analogously, we define ${^F\hspace{-0.01cm}\widetilde{W}}$ as the set of minimal length representatives of $\widetilde{W}_F\backslash\widetilde{W}$, i.e. 
\[
{^F\hspace{-0.01cm}\widetilde{W}}\define\{w\in\widetilde{W}\;|\;w^{-1}\in\widetilde{W}^F\}=\{w\in\widetilde{W}\;|\; l(sw)=l(w)+1\;\;\forall\;s\in\mathbb{S}_F\}. 
\]
For any $w\in\widetilde{W}$, we may write
\[
w=w^F\cdot w_F \quad \text{(resp.} \;\, w={_Fw}\cdot {^F\!w} \,\text{)},
\]
 where $w^F\in \widetilde{W}^F$ (resp. ${^F\!w}\in{^F\widetilde{W}}$) and so $w_F\in\widetilde{W}_F$ (resp. ${_Fw}\in\widetilde{W}_F$). We sometimes replace the subscript (resp. superscript) $F$ by the corresponding parahoric subgroup.

Fix a facet $F$ in the closure of $C$ and denote by $P_F$ (resp. $\mathcal{P}_F$) the corresponding standard parahoric subgroup (resp. parahoric group scheme). Let $w\in W_{\aff}$ and $S_w$ the $(B,P_F)$-Schubert variety in $\mathcal{F}_F$ corresponding to $w$. 

\begin{definition} 
A sequence of parahoric group schemes $\mathcal{P}_1$,...,$\mathcal{P}_n$ and $\mathcal{Q}_1$,...,$\mathcal{Q}_{n}$ with 
\begin{align*}
& \rm(i) && L^+\!\mathcal{Q}_i\subset L^+\mathcal{P}_i\cap L^+\mathcal{P}_{i+1} \quad \text{for}\;\; i=1,...,n-1, \\
& \rm(ii) && L^+\!\mathcal{Q}_n\subset L^+\mathcal{P}_n \cap L^+\mathcal{P}_F
\end{align*}
is called \emph{resolutive with respect to} $S_w$, if the morphism given by multiplication
\[
L^+\mathcal{P}_1 \times^{L^+\!\mathcal{Q}_1}...\times^{L^+\!\mathcal{Q}_{n-1}} L^+\mathcal{P}_n/L^+\!\mathcal{Q}_n \longrightarrow \mathcal{F}_F
\]
factors through $S_w$ and induces a birational morphism
\[
m:L^+\mathcal{P}_1 \times^{L^+\!\mathcal{Q}_1}...\times^{L^+\!\mathcal{Q}_{n-1}} L^+\mathcal{P}_n/L^+\!\mathcal{Q}_n \longrightarrow S_w.
\] 
\end{definition}

The morphism $m$ is $L^+\mathcal{P}_1$-equivariant, and hence $L^+\mathcal{P}_1$ must stabilize the Schubert variety $S_w$. Moreover, by Remark \ref{reductivequotient} the source of $m$ is an iterated extension of homogeneous spaces and hence smooth.

\begin{examples}
(i) Let $w\in (W_{\aff})^F$ and fix a reduced decomposition $(s_1,...,s_n)$ of $w$. Denote by $\mathcal{P}_i$ the parahoric group scheme corresponding $s_i$. Set $\mathcal{Q}_i=\mathcal{B}$ for $i=1,...,n$. Then the sequence $\mathcal{P}_1,...,\mathcal{P}_n$ and $\mathcal{Q}_1,...,\mathcal{Q}_n$ is resolutive for $S_w$.\smallskip \\
(ii) Let $G$ be split and simple. Assume that $F=\{x\}$ is a hyperspecial vertex and that $w=e^{-\alpha^{\vee}}$, where $-\alpha^{\vee}\in X_*(T)^-$ is the unique short antidominant\footnote{The sign comes from the normalization of the Kottwitz morphism in Remark \ref{normalization}.} coroot with respect to the maximal torus $T$. Let $\mathcal{P}_{\alpha^{\vee}}$ be the unique standard parahoric group scheme corresponding to the simple (finite) reflections $\mathbb{S}_{\alpha^{\vee}}$ which stabilize $\alpha^{\vee}$. The simple affine reflection is $s_0=e^{\alpha^{\vee}}s_{\alpha}$, where $s_{\alpha}$ is the reflection about the hyperplane $\ker(\alpha)\subset X_*(T)$. Denote by $\mathcal{P}_{s_0,\alpha^{\vee}}$ the unique standard parahoric group scheme corresponding to the set of simple reflections $\{s_0\}\cup\mathbb{S}_{\alpha^{\vee}}$. Set 
\[
\mathcal{P}_1=\mathcal{P}_{\{x\}} \quad\text{and}\quad \mathcal{P}_2=\mathcal{P}_{s_0,\alpha^{\vee}}\\
\]
and
\[
\mathcal{Q}_1=\mathcal{P}_{\alpha^{\vee}} \quad\text{and}\quad \mathcal{Q}_2=\mathcal{P}_{\alpha^{\vee}}.\\
\]
Then the sequence $\mathcal{P}_1,\mathcal{P}_2$ and $\mathcal{Q}_1,\mathcal{Q}_2$ is resolutive for $S_w$ in the affine Gra§mannian $\mathcal{F}_{\{x\}}$. This is the resolution given by Ng\^o and Polo in \cite[Lemme 7.3]{NgoPolo}. 
\end{examples}

\begin{lem}
There is a unique parahoric group scheme $\mathcal{P}_w$ such that $L^+\mathcal{P}_w$ stabilizes $S_w$, and such that $P_w=L^+\mathcal{P}_w(k)$ is maximal among all standard parahoric subgroups with this property. Furthermore,
\[
\widetilde{W}_{P_w}=\spanned\{s\in\mathbb{S}\;|\; l((sw)^F) \leq l(w^F)\}\subset W_{\aff}.
\]
\end{lem}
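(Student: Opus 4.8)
The plan is to characterize which simple reflections $s\in\mathbb{S}$ have the property that $L^+\mathcal{P}_s$ stabilizes $S_w$, and then observe that these reflections generate a finite parabolic subgroup of $W_{\aff}$, which will yield the desired parahoric group scheme $\mathcal{P}_w$. First I would recall that for a standard parahoric $P$ with corresponding $\mathcal{P}$, the group $L^+\mathcal{P}$ stabilizes the closed subset $S_w\subset\mathcal{F}_F$ if and only if it stabilizes the open dense Schubert cell $C_w=C_w(B,P_F)$, which (passing to $k$-points via the double Tits system $(G_1,B,N_1,\mathbb{S})$, cf. \eqref{doublesystem}) amounts to the containment $P\cdot BwP_F\subset BwP_F$ inside $G_1/P_F$; equivalently $P\,w\,\widetilde{W}_F\subset Bw\widetilde{W}_F$ as subsets of $\widetilde{W}_{P}\backslash\widetilde{W}/\widetilde{W}_F$-type cosets. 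The point is that it suffices to check this for the generators $s\in\mathbb{S}_P$, and for a single reflection $s$ the condition $BsBwP_F\subset BwP_F$ is, by the standard multiplication rules in a Tits system, equivalent to $l((sw)^F)\leq l(w^F)=l(w)$ (i.e. $sw<w$ modulo $\widetilde{W}_F$, or $sw$ lies in the same coset).

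Concretely, I would argue: for $s\in\mathbb{S}$, one has $BsB\cdot BwB=BswB$ if $l(sw)=l(w)+1$ and $BsB\cdot BwB=BswB\cup BwB$ if $l(sw)=l(w)-1$. Projecting to $\mathcal{F}_F$ and using $w=w^F$, the left $L^+\mathcal{P}_s$-translate of $C_w$ stays inside $S_w$ precisely when $l((sw)^F)\leq l(w^F)$: if $l((sw)^F)=l(w^F)+1$ then $sw\in\widetilde{W}^F$ with $sw>w$, so $\overline{BsBwP_F}$ strictly contains $C_w$ and meets a strictly larger Schubert cell, whence $L^+\mathcal{P}_s$ does not stabilize $S_w$; conversely if $l((sw)^F)\leq l(w^F)$ then $sw\leq w$ in $\widetilde{W}^F$ (or $(sw)^F=w^F$), so $BsBwP_F\subseteq BwP_F\cup Bsw^FP_F$ with $sw^F\leq w^F$, giving $L^+\mathcal{P}_s\cdot C_w\subseteq S_w$ and hence, by taking closures and using $L^+\mathcal{P}_s$-invariance of the closure operation, $L^+\mathcal{P}_s\cdot S_w\subseteq S_w$. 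This identifies the set $\mathbb{S}_w\define\{s\in\mathbb{S}\mid l((sw)^F)\leq l(w^F)\}$.

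Next I would show that $\mathbb{S}_w$ generates a \emph{finite} subgroup of $W_{\aff}$, so that it corresponds to an actual facet and parahoric group scheme $\mathcal{P}_w$ with $\mathbb{S}_{P_w}=\mathbb{S}_w$ and $\widetilde{W}_{P_w}=\spanned\{s\in\mathbb{S}_w\}$. This follows because $S_w$ is finite-dimensional: if $\langle\mathbb{S}_w\rangle$ were infinite, then the orbit $L^+\langle\mathbb{S}_w\rangle$-orbit would be infinite-dimensional (the standard parahoric attached to an infinite set of reflections is not of finite type, i.e. $\mathcal{F}$ for that group is not a point plus finitely many cells), contradicting that $L^+\mathcal{P}_w\cdot C_w\subseteq S_w$ is contained in a finite-dimensional variety. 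Alternatively and more cleanly: the set of standard parahoric subgroups $P$ stabilizing $S_w$ is closed under the operation $P\mapsto \langle P,P'\rangle$ when both stabilize $S_w$ (since the group they generate still acts on $S_w$), and among the finite-type standard parahorics there is a unique maximal one — finiteness of $\langle\mathbb{S}_w\rangle$ is forced since $\mathcal{P}_w$ must be of finite type for $L^+\mathcal{P}_w$ to act on the finite-dimensional $S_w$ with finitely many orbits. Uniqueness and maximality of $P_w$ are then immediate: any standard parahoric stabilizing $S_w$ has its set of simple reflections contained in $\mathbb{S}_w$, hence is contained in $P_w$.

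The main obstacle I anticipate is making the equivalence "$L^+\mathcal{P}_s$ stabilizes $S_w$" $\iff$ "$l((sw)^F)\leq l(w^F)$" fully rigorous at the scheme-theoretic (not just $k$-point) level: one must check that stabilizing the underlying reduced subset suffices (which is fine since $S_w$ is reduced and $L^+\mathcal{P}_s$ is connected, so it acts on $(\mathcal{F}_F)_{\red}$ preserving irreducible closed subsets of a given dimension), and one must handle the subtlety that when $(sw)^F=w^F$ the cell $C_w$ is already $L^+\mathcal{P}_s$-stable "for free." The finiteness argument for $\langle\mathbb{S}_w\rangle$ is the second delicate point, but it is forced by dimension considerations as sketched. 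Everything else is bookkeeping with the Bruhat order on $\widetilde{W}^F$ and the multiplication rules in the affine Tits system, for which the results recalled in Section 1 (especially Lemma \ref{replength} and the description of $\widetilde{W}^F$) suffice.
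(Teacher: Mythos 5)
Your proposal is, in substance, the paper's own argument written out in full: the paper's proof merely observes that for $s\in\mathbb{S}$ the conditions $l((sw)^F)\le l(w^F)$ and $(sw)^F\le w^F$ are equivalent, defines $P_w$ as the standard parahoric attached to this set of simple reflections, and cites Proposition \ref{enumeratecells} (i) (the stratification $S_w=\coprod_{v\le w}C_v$) for both the stabilization and the maximality. Your Tits-system computation ($BsB\cdot BwB=BswB$ or $BswB\cup BwB$ according to lengths, plus the Deodhar-type trichotomy for $w\in\widetilde{W}^F$) is exactly what is hidden behind that citation, and your insistence on checking that $\langle\mathbb{S}_w\rangle$ is \emph{finite} (so that it really corresponds to a facet, hence to a parahoric group scheme) addresses a point the paper passes over in silence; your dimension argument works, and the quickest version of it is that $\widetilde{W}_{P_w}\,w\,\widetilde{W}_F\subset\bigcup_{v\le w,\,v\in\widetilde{W}^F}v\widetilde{W}_F$, a finite set, so $\widetilde{W}_{P_w}$ is finite.

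One sentence of your first paragraph is false and should be struck: stabilizing the closed set $S_w=\overline{C_w}$ is \emph{not} equivalent to stabilizing the open cell $C_w$, i.e.\ to $P\cdot BwP_F\subset BwP_F$. For $s$ with $sw<w$ and $sw\in\widetilde{W}^F$ one has $BsB\,w\,P_F=BswP_F\cup BwP_F\not\subset BwP_F$, yet $L^+\mathcal{P}_s$ does stabilize $S_w$; had you actually used the criterion of that sentence, you would only have picked up the reflections with $(sw)^F=w^F$, contradicting the formula for $\widetilde{W}_{P_w}$ in the statement. The correct criterion, which your ``Concretely'' paragraph in fact uses, is $P\cdot C_w\subset S_w(k)=\bigcup_{v\le w}BvP_F/P_F$, followed by passing to closures; with that correction the rest of your argument (each $L^+\mathcal{P}_s$ with $s\in\mathbb{S}_w$ preserves $S_w$, the subgroup generated by stabilizing parahorics still stabilizes, and any standard parahoric stabilizing $S_w$ has its simple reflections in $\mathbb{S}_w$, whence uniqueness and maximality) is correct and matches the paper.
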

\begin{proof}
For any $s\in\mathbb{S}$, the properties $l((sw)^F)\leq l(w^F)$ and $(sw)^F\leq w^F$ are equivalent. Let $P_w$ be the standard parahoric subgroup corresponding to the set of simple reflections $\{s\in\mathbb{S}\;|\; (sw)^F \leq w^F\}$. By Proposition \ref{enumeratecells} (i), the group scheme $L^+\mathcal{P}_w$ stabilizes $S_w$ and $P_w$ is maximal with this property.
\end{proof}

The group scheme $L^+\mathcal{P}_w$ is called the \emph{stabilizer of} $S_w$.

\begin{thm}\label{resolsimplycon}
Let $w\in (W_{\aff})^F$ and let $S_w$ be the corresponding $(B,P_F)$-Schubert variety. Then there exists a unique decomposition $w={w_1}\cdot...\cdot{w_n}$ with $$l(w)=l({w_1})+...+l({w_n}),$$ and with the following property: define the sequence of parahoric group schemes $\mathcal{P}_1$,...,$\mathcal{P}_n$ and $\mathcal{Q}_1$,...,$\mathcal{Q}_{n}$ by the following conditions,
\begin{align*}
& \emph{(i)} && L^+\mathcal{P}_i \quad \text{is the stabilizer of the} \;\;(B,P_F) \text{-Schubert variety given by} \quad {w_i}\cdot...\cdot{w_n} \\ & &&\text{for}\quad i=1,...,n, \\
& \emph{(ii)} && L^+\!\mathcal{Q}_i=L^+\mathcal{P}_i\cap L^+\mathcal{P}_{i+1} \quad \text{for}\quad i=1,...,n-1, \\
& \emph{(iii)} && L^+\!\mathcal{Q}_n=L^+\mathcal{P}_n\cap L^+\mathcal{P}_F.
\end{align*}
Then $\mathcal{P}_1$,...,$\mathcal{P}_n$ and $\mathcal{Q}_1$,...,$\mathcal{Q}_{n}$ is resolutive for $S_w$.  
\end{thm}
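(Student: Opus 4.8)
The plan is to reduce immediately to the case of the full flag variety, and then build the decomposition $w = w_1 \cdots w_n$ by a greedy downward induction on length, peeling off at each step the largest possible "parabolic chunk" stabilizing the current Schubert variety. Concretely, given $w \in (W_{\aff})^F$, set $S = S_w$ and let $L^+\mathcal{P}_1$ be the stabilizer of $S_w$ as in the preceding lemma, with $\widetilde{W}_{P_1} = \spanned\{s \in \mathbb{S} \mid (sw)^F \leq w^F\}$. Write $w = w_1 \cdot w'$ with $w_1 \in \widetilde{W}_{P_1}$ of maximal length subject to $l(w) = l(w_1) + l(w')$ (equivalently $w'$ is the minimal-length representative of $\widetilde{W}_{P_1} w$, which lies in ${}^{P_1}\widetilde{W}$); one must check $w' \in (W_{\aff})^F$ still, which follows since $l(w's) = l(w's)$ is unchanged under right multiplication by $\mathbb{S}_F$ when passing to the minimal representative on the left. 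Then recurse on $S_{w'}$ in $\mathcal{F}_F$. Termination is clear because $l(w') < l(w)$ once $w \notin \widetilde{W}_{P_1}$ (and if $w \in \widetilde{W}_{P_1}$, i.e.\ $L^+\mathcal{P}_1$ already acts transitively on all of $S_w$, the recursion stops with $n=1$, $w_1 = w$). Uniqueness of the decomposition comes from the fact that at each stage $\widetilde{W}_{P_i}$ is intrinsically determined by $S_{w_i \cdots w_n}$ via the lemma, and $w_i$ is forced to be the unique maximal-length element of $\widetilde{W}_{P_i}$ in the left coset $\widetilde{W}_{P_i}(w_i \cdots w_n)$ by Lemma \ref{representants}.

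Next I would verify that the sequence $\mathcal{P}_1, \dots, \mathcal{P}_n$, $\mathcal{Q}_1, \dots, \mathcal{Q}_n$ defined by (i)–(iii) satisfies the containment conditions of the definition of resolutive, and that $L^+\mathcal{Q}_i$ is indeed of the form $L^+\mathcal{P}_f$ for a facet $f$: the intersection $L^+\mathcal{P}_i \cap L^+\mathcal{P}_{i+1}$ corresponds on the level of Iwahori–Weyl groups to $\widetilde{W}_{P_i} \cap \widetilde{W}_{P_{i+1}}$, which is the parabolic subgroup $\widetilde{W}_{\mathbb{S}_{P_i} \cap \mathbb{S}_{P_{i+1}}}$ of the Coxeter system, hence is $\widetilde{W}_g$ for the facet $g$ cut out by the common walls; that two standard parahorics intersect in a standard parahoric is a general fact about the building (or the Tits system \eqref{doublesystem}). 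Conditions (i) of the definition then hold by construction, and (ii) of the definition holds because $\mathcal{P}_F$ is itself standard. So the source $Z := L^+\mathcal{P}_1 \times^{L^+\mathcal{Q}_1} \cdots \times^{L^+\mathcal{Q}_{n-1}} L^+\mathcal{P}_n / L^+\mathcal{Q}_n$ is an iterated fibration of homogeneous spaces $\bar P_i^{\red}/\bar Q_i'$ by Remark \ref{reductivequotient}, hence smooth and projective over $k$, with $\dim Z = \sum_{i=1}^{n} \dim(L^+\mathcal{P}_i / L^+\mathcal{Q}_i) = \sum_{i=1}^n l(v_i)$ where $v_i$ is the longest element of $(\widetilde{W}_{P_i})^{Q_i}$ (with $Q_n$ replaced by $F$ in the last factor). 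The key numerical claim is that this dimension equals $l(w)$: this is exactly where the greedy choice pays off, since at step $i$ the "chunk" $w_i$ is the longest element of $\widetilde{W}_{P_i}$ relative to $w_i \cdots w_n$, and one shows $l(w_i)$ equals the length of the longest element of $(\widetilde{W}_{P_i})^{Q_i}$ — i.e.\ $w_i$ is the maximal-length coset representative and simultaneously the one the multiplication map needs — using $l(w_i \cdots w_n) = l(w_i) + l(w_{i+1} \cdots w_n)$ and the fact that $w_{i+1}\cdots w_n$ is the minimal representative of its left $\widetilde{W}_{P_i}$-coset.

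With $\dim Z = l(w) = \dim S_w$ in hand, it remains to see that the multiplication map $m : Z \to \mathcal{F}_F$ factors through $S_w$ and is birational onto it. Factoring through $S_w$: by induction the map for $w'$ factors through $S_{w'} \subset S_w$, and $L^+\mathcal{P}_1 \cdot S_{w'} \subseteq S_w$ because $L^+\mathcal{P}_1$ stabilizes $S_w$ by definition and $S_{w'} \subseteq S_w$ by Proposition \ref{enumeratecells}(i) (as $w' \leq w$); moreover the image contains the open cell $C_w(B, P_F)$, since $B \subset L^+\mathcal{P}_1$ and $w$ lies in the image stratum-wise. Surjectivity onto $S_w$ then follows from properness of $m$ (source projective, target separated) together with density of the image. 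For birationality: $m$ is $L^+\mathcal{P}_1$-equivariant, so it suffices to check it is an isomorphism over the open dense orbit $C_w(B,P_F)$; there the fiber is a single point because the decomposition $w = w_1 \cdots w_n$ with additive lengths means the "big cell" product $U_{w_1} \times \cdots \times U_{w_n}$ maps isomorphically onto $U_w$ by the usual Bruhat-decomposition bookkeeping in the Tits system (concretely, one runs the same computation as in the Demazure-resolution proof of \cite[Prop. 9.6]{PRtwisted}, now with the $s_i$ replaced by the larger chunks $w_i$). Finally I would unwind the reduction: the general Theorem \ref{resolutionthm} follows by applying this to the full flag variety and then using that the projection $p : \mathcal{F}_C \to \mathcal{F}_F$ sends the Iwahori-level resolution to a resolution at level $\mathcal{P}_F$, exactly as in the proof of Proposition \ref{enumeratecells}.

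\medskip

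\noindent\emph{Main obstacle.} The delicate point is the dimension count $\sum_i \dim(L^+\mathcal{P}_i/L^+\mathcal{Q}_i) = l(w)$, i.e.\ proving that the greedy choice of $\mathcal{P}_i$ (maximal stabilizer) forces $w_i$ to be simultaneously the longest element of $\widetilde{W}_{P_i}$ compatible with the already-chosen tail and the longest coset representative in $(\widetilde{W}_{P_i})^{Q_i}$, so that no dimension is lost at the gluing loci $L^+\mathcal{Q}_i$. This is a purely combinatorial statement about the Coxeter system $(W_{\aff}, \mathbb{S})$ and its parabolic subgroups, provable by careful length-additivity arguments, but it is where the real content lies.
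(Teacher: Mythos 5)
Your construction coincides with the paper's (Perrin's) proof: you take $L^+\mathcal{P}_1$ to be the stabilizer of $S_w$, split $w={}_{P_1}w\cdot{}^{P_1}\!w$ with $w_1={}_{P_1}w$ and $v={}^{P_1}\!w$, let $L^+\mathcal{Q}_1$ be the intersection of $L^+\mathcal{P}_1$ with the stabilizer of $S_v$, recurse, and check birationality over the open cell $C_w$. So the route is not different; the issue is that the one step you leave open --- your ``main obstacle'', namely that $w_1$ is the longest element of $(\widetilde{W}_{P_1})^{Q_1}$, equivalently that $\dim\bigl(L^+\mathcal{P}_1\times^{L^+\mathcal{Q}_1}S_v\bigr)=l(w)$ --- is precisely the entire content of the paper's proof; the factorization through $S_w$, surjectivity, and the identification of the fibre over $C_w$ are the routine parts (and your remarks that $v\in(W_{\aff})^F$ and that uniqueness is forced stage by stage are fine). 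Without that claim the argument does not close: length additivity $l(w)=l(w_1)+l(v)$ only shows that $(L^+\mathcal{B}\,w_1\,L^+\mathcal{Q}_1)\times^{L^+\mathcal{Q}_1}(L^+\mathcal{Q}_1\,v\,L^+\mathcal{P}_F)/L^+\mathcal{P}_F$ maps isomorphically onto $C_w$; if $w_1$ were not the longest element of $(\widetilde{W}_{P_1})^{Q_1}$, this locus would not be dense in the Bott--Samelson-type space, whose dimension would exceed $l(w)$, and the (still surjective) multiplication map would have positive-dimensional generic fibres. Asserting that the claim is ``provable by careful length-additivity arguments'' is therefore a genuine gap, not a routine verification to be waved through.

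For completeness, here is how the paper closes exactly this gap. Let $s\in\mathbb{S}_{P_1}$ with $l(sw_1)=l(w_1)+1$. Since $v$ is the minimal-length representative of its coset $\widetilde{W}_{P_1}v$, one gets $l(sw)=l(sw_1v)=l(sw_1)+l(v)=l(w)+1$, so $w\leq sw$. On the other hand, $s$ lies in the Weyl group of the stabilizer of $S_w$, so $(sw)^F\leq w^F=w$; combining this with $w=w^F\leq (sw)^F$ (the map $u\mapsto u^F$ is order-preserving) gives $(sw)^F=w$, i.e.\ $w^{-1}sw\in\widetilde{W}_F$. Consequently $(w_1^{-1}sw_1\,v)^F=(v\,w^{-1}sw)^F=v^F=v$, which shows that $w_1^{-1}sw_1$ (an element of $\widetilde{W}_{P_1}$) lies in $\widetilde{W}_{Q_1}$, and hence $l((sw_1)^{Q_1})\leq l(w_1^{Q_1})$. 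Thus no simple reflection of $\mathbb{S}_{P_1}$ increases the length of $w_1$ modulo $\widetilde{W}_{Q_1}$, i.e.\ $w_1$ is the longest element of $(\widetilde{W}_{P_1})^{Q_1}$; this is what makes $(L^+\mathcal{B}\,w_1\,L^+\mathcal{Q}_1)/L^+\mathcal{Q}_1$ the open $L^+\mathcal{B}$-orbit in $L^+\mathcal{P}_1/L^+\mathcal{Q}_1$, and with it your dimension count and the birationality of the multiplication map; the theorem then follows by induction as you indicate.
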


\begin{cor}
Let $F'$ be another facet in the Bruhat-Tits building of $G$ and let $S(F',F)$ be any $(F',F)$-Schubert variety in $\mathcal{F}_F$. Then there exists a resolutive sequence of parahoric group schemes for $S(F',F)$ such that the corresponding birational morphism is $L^+\mathcal{P}_{F'}$-equivariant. 
\end{cor}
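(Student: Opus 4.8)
The plan is to deduce the corollary from Theorem~\ref{resolsimplycon} by identifying $S(F',F)$, as a closed subvariety of $\mathcal{F}_F$, with an Iwahori Schubert variety whose stabilizer already contains $L^+\mathcal{P}_{F'}$. First I would perform the standard reductions. Translating $S(F',F)$ by a suitable element of $\pi_1(G)_I$ (which carries it into the neutral component, as in the proof of Proposition~\ref{enumeratecells}, and transports the $L^+\mathcal{P}_{F'}$-action to that of a conjugate parahoric) we may assume $S(F',F)=S_w(P_{F'},P_F)$ with $w\in W_{\aff}$; and replacing $F'$ by a $G(K)$-conjugate lying in the closure of $C$ (left translation on $\mathcal{F}_F$ carries $(F',F)$-Schubert varieties to $(gF',F)$-Schubert varieties and intertwines the parahoric actions) we may assume $F'\subseteq\bar C$. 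Then $w\in(W_{\aff})^F$, and concretely $w={_{F'}w}^F$ is the longest element of $\{(vw)^F\mid v\in\widetilde{W}_{F'}\}$.

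Next I would apply Theorem~\ref{resolsimplycon} to $w\in(W_{\aff})^F$, obtaining a resolutive sequence $\mathcal{P}_1,\dots,\mathcal{P}_n,\mathcal{Q}_1,\dots,\mathcal{Q}_n$ for the $(B,P_F)$-Schubert variety $S_w(B,P_F)$, in which $L^+\mathcal{P}_1$ is the stabilizer of $S_w(B,P_F)$, so that $\widetilde{W}_{P_1}=\spanned\{s\in\mathbb{S}\mid (sw)^F\leq w\}$. The heart of the matter is the inclusion
\[
L^+\mathcal{P}_{F'}\subseteq L^+\mathcal{P}_1 .
\]
Since standard parahorics correspond to their sets of simple reflections and $\widetilde{W}_{F'}=\spanned(\mathbb{S}_{F'})$, this reduces to $\mathbb{S}_{F'}\subseteq\{s\in\mathbb{S}\mid (sw)^F\leq w\}$. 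For $s\in\mathbb{S}_{F'}\subseteq\widetilde{W}_{F'}$ the element $(sw)^F$ lies in $\{(vw)^F\mid v\in\widetilde{W}_{F'}\}$, whose longest member is $w={_{F'}w}^F$; hence $l((sw)^F)\leq l(w)=l(w^F)$, and by the equivalence of $l((sw)^F)\leq l(w^F)$ with $(sw)^F\leq w^F$ recorded in the proof of the Lemma preceding Theorem~\ref{resolsimplycon}, this gives $(sw)^F\leq w$, as required.

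With this in hand the proof finishes quickly: $L^+\mathcal{B}\subseteq L^+\mathcal{P}_{F'}$ gives $S_w(B,P_F)\subseteq S_w(P_{F'},P_F)=S(F',F)$, while $L^+\mathcal{P}_{F'}\subseteq L^+\mathcal{P}_1$ stabilizes $S_w(B,P_F)$, which contains $n_wL^+\mathcal{P}_F$ and therefore the orbit $C_w(P_{F'},P_F)$ together with its closure; hence $S(F',F)=S_w(B,P_F)$. Thus $\mathcal{P}_1,\dots,\mathcal{P}_n,\mathcal{Q}_1,\dots,\mathcal{Q}_n$ is resolutive for $S(F',F)$, and the birational morphism $m$, being $L^+\mathcal{P}_1$-equivariant, is in particular $L^+\mathcal{P}_{F'}$-equivariant. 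I expect the inclusion $L^+\mathcal{P}_{F'}\subseteq L^+\mathcal{P}_1$ to be the main point; it rests entirely on the maximal-length characterization of ${_{F'}w}^F$ from Lemma~\ref{replength} and on the equivalence between length-decrease under a simple reflection and decrease in the Bruhat order, and once these combinatorial facts are granted it is immediate. The only other step needing attention is the conjugation bringing $F'$ into $\bar C$: one must verify that conjugating the resolutive data by the chosen $g\in G(K)$, while keeping the fixed factor $L^+\mathcal{P}_F$ on the right, again produces a resolutive sequence for $S(F',F)$ in $\mathcal{F}_F$ — equivalently, that it suffices to treat $F'\subseteq\bar C$, a case which already covers the application $F'=F$ used for Theorem~\ref{resolutionthm}.
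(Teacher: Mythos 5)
Your core argument is correct and is exactly the deduction the paper intends: the corollary is stated with no written proof, as an immediate consequence of Theorem \ref{resolsimplycon}, and your two main points supply the missing details. Namely, for $w={_{F'}w}^F$ (so $w=w^F\in(W_{\aff})^F$ after passing to the neutral component) every $s\in\mathbb{S}_{F'}$ satisfies $l((sw)^F)\leq l(w^F)$ by the maximal-length characterization of ${_{F'}w}^F$ from Lemma \ref{representants} (ii) and Lemma \ref{replength}, hence $\mathbb{S}_{F'}$ is contained in the generating set of $\widetilde{W}_{P_w}$ described in the lemma on the stabilizer, giving $L^+\mathcal{P}_{F'}\subseteq L^+\mathcal{P}_1$; and the two inclusions $S_w(B,P_F)\subseteq S_w(P_{F'},P_F)$ (from $B\subseteq P_{F'}$) and $C_w(P_{F'},P_F)\subseteq S_w(B,P_F)$ (from the stability just proved) identify $S(F',F)$ with $S_w(B,P_F)$, so the resolutive sequence of Theorem \ref{resolsimplycon} does the job and $m$ is $L^+\mathcal{P}_{F'}$-equivariant.

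The step you flag at the end, reducing a general facet $F'$ to $F'\subseteq\overline{C}$ by left translation, is indeed where the difficulty sits, and it is more than a routine verification. If $g$ carries $F'$ into $\overline{C}$ and you transport a resolutive sequence for the translated Schubert variety back by $g^{-1}$, the multiplication morphism becomes $(p_1,\ldots,p_n)\mapsto (g^{-1}p_1g)\cdots(g^{-1}p_ng)\,g^{-1}L^+\mathcal{P}_F$: the conjugated group $g^{-1}\mathcal{Q}_ng$ need not lie in $\mathcal{P}_F$, and the base coset has moved, so the transported data does not satisfy the Definition verbatim. In fact the literal statement cannot hold for arbitrary $F'$: the image of any multiplication morphism as in the Definition contains the base point $e\,L^+\mathcal{P}_F$, whereas an $(F',F)$-Schubert variety need not contain it (take $n_w$ with $P_{F'}=n_wP_Fn_w^{-1}$, so that $S_w(P_{F'},P_F)$ is the single point $n_wL^+\mathcal{P}_F$); the same remark applies to Schubert varieties outside the neutral component, which is why Theorem \ref{resolutionthm} in the introduction carries the ``for simplicity'' hypothesis. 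So the corollary should be read either for $F'$ contained in the closure of an alcove whose closure also contains $F$, after translation into the neutral component, or with the notion of resolutive sequence relaxed so as to allow a translated base coset. Your argument settles exactly that essential case, which is the only one used in the paper (both Theorem \ref{resolutionthm} and the local-model application have $F'=F$ in the closure of $C$), and this limitation is a feature of the statement rather than a flaw in your reasoning.
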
  
\vspace{-0,95cm}
\hfill \ensuremath{\Box}

\begin{proof}[Proof of Theorem \ref{resolsimplycon} (N. Perrin).]
Let $L^+\mathcal{P}=L^+\mathcal{P}_1$ be the stabilizer of $S_w$, and denote by $P=L^+\mathcal{P}(k)$ the corresponding parahoric subgroup. Write $w={_Pw}\cdot{^P\! w}$ and define $w_1={_Pw}$. Note that ${_Pw}$ is non-trivial, if $w$ is non-trivial. So $L^+\mathcal{Q}=L^+\mathcal{Q}_1$ is the intersection of $L^+\mathcal{P}$ with the stabilizer of the $(B,P_F)$-Schubert variety $S_v$ corresponding to $v={^P\!w}$. \smallskip \\
\textbf{claim:} \emph{The element $w_1={_Pw}$ is the longest element in $(\widetilde{W}_P)^{Q}$.} \smallskip \\
Let $s\in\mathbb{S}_P$, and assume that $l(sw_1)=l(w_1)+1$. Since $sw_1\in\widetilde{W}_P$, we get that $l(sw_1v)=l(sw_1)+l(v)$, which means $l(sw)=l(w)+1$. Hence, $sw\geq w$ in the Bruhat order. But $s\in\widetilde{W}_P$ implies $(sw)^F=w^F=w$, i.e. $w^{-1}sw\in\widetilde{W}_F$. We obtain
\[
v=v^F=(vw^{-1}sw)^F=(w_1^{-1}sw_1v)^F,
\]
and so $w_1^{-1}sw_1\in\widetilde{W}_Q$. This implies that $l((sw_1)^Q)\leq l(w_1^Q)$, and hence the claim. \smallskip \\
We next show that the morphism 
\begin{equation}\label{indstep}
L^+\mathcal{P}\times^{L^+\!\mathcal{Q}}S_v\longrightarrow S_w
\end{equation}
given by multiplication is birational.\\ 
In the following, we equip all orbits with their reduced scheme structure. By the claim above, the orbit $(L^+\!\mathcal{B}\,{_Pw}\,L^+\!\mathcal{Q})/L^+\!\mathcal{Q}$ is the open $L^+\!\mathcal{B}$-orbit in $L^+\!\mathcal{P}/L^+\!\mathcal{Q}$. We obtain that $(L^+\!\mathcal{B}\,{_Pw}\,L^+\!\mathcal{Q})\times^{L^+\!\mathcal{Q}}(L^+\!\mathcal{Q}\,{^P\!w}\,L^+\!\mathcal{P}_F)/L^+\!\mathcal{P}_F$ is open in $L^+\!\mathcal{P}\times^{L^+\!\mathcal{Q}}S_v$ and, via the multiplication morphism, isomorphic to $(L^+\!\mathcal{B}\,w\,L^+\!\mathcal{P}_F)/L^+\!\mathcal{P}_F$. This proves that the morphism \eqref{indstep} is birational. The theorem now follows by induction on $n$.
\end{proof}

\section{Example of a ramified unitary group in odd dimension}\label{unitary}
We make Theorem \ref{resolsimplycon} explicit in the case of a twisted affine Gra§mannian for a ramified quasi-split unitary group. In the presentation of the material we follow \cite{PRunitary}.

Assume that $\chara(k)\neq 2$. Fix a quadratic totally ramified extension $\tilde{K}$ of $K$ and a uniformizer $u\in\mathcal{O}_{\tilde{K}}$ with $u^2=t$, where $t$ is a uniformizer of $\mathcal{O}_K$. We extend the valuation of $K$ to $\tilde{K}$, i.e. $\val(u)={1\over 2}$. Denote by $\overline{\cdot}\in I=\Gal(\tilde{K}/K)$ the non-trivial element of the Galois group. Let $G=\SU(W,\phi)$ be the special unitary group for a hermitian vector space $(W,\phi)$ of odd dimension $n=2m + 1\geq 3$ over $\tilde{K}$, i.e.
\[
G(R)=\{g\in \SL(W\otimes_KR)\;|\; \phi(gv,gw)=\phi(v,w) \;\;\forall\; v,w\in W\otimes_KR\}
\]
for any $K$-algebra $R$. Assume that $\phi$ splits, i.e. that there exists a basis $e_1,...,e_n$ of $W$ such that
\[ 
\phi(e_i,e_{n-j+1})=\delta_{i,j} \qquad \forall \;\; i,j=1,...,n.
\] 
For $0\leq i\leq n-1$, set
\[
\Lambda_i=\spanned_{\mathcal{O}_{\tilde{K}}}\{u^{-1}e_1,...,u^{-1}e_i,e_{i+1},...,e_n\},
\]
and complete $\{\Lambda_0,\ldots,\Lambda_{n-1}\}$ into a selfdual periodic lattice chain $(\Lambda_i)_{i\in\mathbb{Z}}$ by defining $\Lambda_{qn+i}=u^{-q}\Lambda_i$ for $i\in\{0,\ldots,n-1\}$. The dual $\Lambda_i^{\vee}$ with respect to $\phi$ of the lattice $\Lambda_i$ is given by $\Lambda_i^{\vee}=\Lambda_{-i}$. \\
For any non-empty subset $J\subset\{0,\ldots,m\}$ we may consider the partial periodic lattice chain $\Lambda_\bullet^J=(\Lambda_i)_i$ of type $J$, for $i \in J\cup(-J)$ modulo $n$. Every parahoric subgroup of $G(K)$ is conjugate to the stabilizer of a partial selfdual periodic lattice chain of some uniquely defined type $J$ (cf. \cite[\S 4]{PRtwisted}). In this way we get a bijection between non-empty subsets of $\{0,...,m\}$ and conjugacy classes of parahoric subgroups of $G(K)$. The subsets $J=\{0\}$ and $J=\{m\}$ correspond to the special parahoric subgroups. Let $\mathcal{P}_J$ be the parahoric group scheme which corresponds to the stabilizer of the lattice chain $\Lambda_\bullet^J$. \\
Now let $\mathcal{P}=\mathcal{P}_J$ with $J=\{m\}$. Let $\mathcal{F}=LG/L^+\mathcal{P}$ be the corresponding twisted affine Gra§mannian. By \cite[Thm. 4.1]{PRtwisted}, there is a functorial bijection of sets between $\mathcal{F}(\Spec(R))$ and
\[
\left\{\left.
\begin{aligned}
& \text{partial selfdual periodic} \\ 
& R\pot{u}\text{-lattice chains}\;\;(\mathcal{L}_i)_i \\
& \text{for}\;\; i\in\{m,-m\} \bmod n \quad
\end{aligned}
 \right| 
\begin{aligned}
\text{(i)} & \quad \wedge^n\mathcal{L}_m = (u^{-m}) \\
 \text{(ii)} & \quad \mathcal{L}_{m+1}/\mathcal{L}_m \;\; \text{is locally free on} \;\; R \;\; \text{of rank} \;\; 1   
\end{aligned}
\right\}.
\]
Let $S$ be the standard (diagonal) maximal split torus of $G$ whose $K$-valued points are 
\[
\{\text{diag}(a_1,\ldots,a_m,1,a_m^{-1},\ldots,a_1^{-1})\;|\;a_i\in K \;\;\forall\;i=1,\ldots,m\}.
\]
Its centralizer $T$ is the maximal torus whose $K$-valued points are
\[
\{\text{diag}(a_1,\ldots,a_m\,,\,a^{-1}\overline{a}\,,\,\overline{a}^{-1}_m,\ldots,\overline{a}^{-1}_1)\;|\;a=a_1\cdot\ldots\cdot a_m\;\;\;\text{and}\;\;\;a_i\in \tilde{K} \;\;\forall\;i=1,\ldots,m\}.
\]
We fix the Borel subgroup of upper triangular matrices in $G$, and the alcove contained in the corresponding positive Weyl chamber. \\
The group $G$ is simply connected and hence $\widetilde{W}=W_{\aff}$ by Remark \ref{simplyrem}. For $p=0,\ldots,m$, define
\begin{equation}\label{ele}
e^{-\mu_p}\define [\text{diag}(u^{(p)},1,\ldots,1,(-1)^p,1,\ldots,1,(-{u^{-1}})^{(p)})]\in W_{\aff},
\end{equation}
where $(-1)^p$ is the $m+1$-th entry of the diagonal matrix.
We are interested in determining the resolution provided by Theorem \ref{resolsimplycon} of the $(\mathcal{P},\mathcal{P})$-Schubert varieties $S_p\subset \mathcal{F}$ corresponding to $e^{-\mu_p}$ in terms of lattice chains. \\
First we make the Kottwitz morphism $\kappa_T:T(K)\rightarrow X_*(T)_I$ and the identification of the affine Weyl group with the affine Weyl group of a reduced root system explicit. \\
We identify 
\[
X_*(T)=\{(x_1,\ldots,x_n)\in\mathbb{Z}^n\;|\;\sum{x_i}=0\}\subset \mathbb{Z}^n
\]
and $X^*(T)=\mathbb{Z}^n/\mathbb{Z}$ with $\mathbb{Z}\hookrightarrow\mathbb{Z}^n$ embedded diagonally. The non-trivial element of the Galois group $I$ acts on $\mathbb{Z}^n$ by
\[
(x_1,\ldots,x_n)\mapsto (-x_n,\ldots,-x_1).
\] 
The pairing $X_*(T)\times X^*(T)\rightarrow\mathbb{Z}$ is induced by the standard pairing $\mathbb{Z}^n\times\mathbb{Z}^n\rightarrow\mathbb{Z}$. This gives rise to a commutative diagram
\begin{equation}\label{stdpairing}
\begin{CD}
X_*(T)_I\times X^*(T)^I @>>> \mathbb{Z}  \\
                   @|                                  @| \\
               \mathbb{Z}^m\times\mathbb{Z}^m @>{\phantom{\text{long label}}}>> \mathbb{Z}.
\end{CD}
\end{equation}
Hence, the pairing $\langle\str,\str\rangle:\mathbb{Z}^m\times\mathbb{Z}^m\longrightarrow \mathbb{Z}$ in \eqref{stdpairing} is the standard pairing. \\ 
Set $V=X_*(T)_I\otimes\mathbb{R}=\mathbb{R}^m$. Since $X_*(T)_I$ is torsion free, the Kottwitz morphism is uniquely determined by 
\[
\langle\kappa_T(a),e_i\rangle=-\val(a_i\overline{a}_i),
\]
for $a\in T(K)$ and $e_i$ the $i$-th standard basis vector of $V^{\vee}=\mathbb{R}^m$, i.e. we have \footnote{The sign in \eqref{kottunit} comes from our normalization of the Kottwitz morphism in Remark \ref{normalization}.}
\begin{equation}\label{kottunit}
\begin{split}
\kappa_T: T(K) & \longrightarrow X_*(T)_I=\mathbb{Z}^m  \\
\text{diag}(a_1,\ldots,a_n) & \longmapsto (-2\val(a_1),\ldots,-2\val(a_m)). 
\end{split}
\end{equation}
We identify $X_*(S)$ via the canonical mapping $X_*(S)\subset X_*(T)\rightarrow X_*(T)_I=\mathbb{Z}^m$ with $2\mathbb{Z}^m$. By \cite[2.4.2]{PRunitary} the affine root system of $G$ with respect to $S$ (in the chosen basis of $V^{\vee}$) is given by
\[
\pm {1\over 2}e_i\pm {1\over 2}e_j + {1\over 2}\mathbb{Z}, \qquad \pm {1\over 2}e_i + {1\over 2}\mathbb{Z} \quad \text{and} \quad \pm e_i + {1\over 2} + \mathbb{Z}.
\]
Hence, the set of root hyperplanes is the zero set of the affine functions 
\[
\{\pm e_i\pm e_j + \mathbb{Z}, \pm 2e_i + \mathbb{Z}\},
\]
i.e. the corresponding reduced root system $\Sigma_0$ is of type $C_m$. By the choice of the base lattice $\Lambda_m$ the affine Weyl group $W_{\aff}$ is represented as a semidirect product
\begin{equation}\label{semiunit}
W_{\aff}=Q^{\vee}(\Sigma_0)\rtimes W(\Sigma_0)=\mathbb{Z}^{m}\rtimes W_0 \hspace{0.5cm} \text{with} \hspace{0.5cm} W_0=\mathfrak{S}_m\rtimes \{\pm 1\}^{m}.
\end{equation}
The element $e^{-\mu_p}\in W_{\aff}$ defined in \eqref{ele} corresponds via the Kottwitz morphism $\kappa_T$ to the antidominant cocharacter $-\mu_p=(-1^{(p)},0^{(m-p)})$. Denote by $\tau_{i,j}$ the transposition in $\mathfrak{S}_m$ permuting $i$ and $j$ and by $\pm1_i$ the $i$-th entry of $\{\pm1\}^{m}$. Then the simple reflections $s_1,...,s_m$ of $W_0$ are given by $s_i=\tau_{i,i+1}$ for $i=1,...,m-1$ and $s_m=-1_m$. The simple affine reflection $s_0$ is given by $s_0=((1^{(1)},0^{(m-1)}),-1_1)$ with respect to the semidirect product decomposition \eqref{semiunit}. 

\begin{lem}\label{exampledecom}
The decomposition of $e^{-\mu_p}$ (provided by Theorem \ref{resolsimplycon}) is determined by $e^{-\mu_p}=w_{p,1}\cdot w_{p,2}$ with
\[
w_{p,2}=\prod_{i=1}^p (s_0\cdot\ldots\cdot s_{i-1}).
\]
The element $w_{p,2}$ has length $p(p+1)\over 2$ and maps under the quotient mapping $W_{\aff}\rightarrow W_{\aff}/W_0$ to $\mu_p$. 
\end{lem}
\vspace{-0,95cm}
\hfill \ensuremath{\Box}

Denote by $\mathcal{Q}_p$ the standard parahoric group scheme corresponding to the subset of simple reflections $\{s_1,\ldots, \widehat{s}_p,\ldots, s_m\}$. This set generates the stabilizer of $\mu_p$ in $W_0$. Let $\rho=(m,m-1,\ldots,1)\in V^{\vee}$ be the halfsum of the positive roots in $\Sigma_0$.

\begin{cor}\label{exfundcounitary}
The Schubert variety $S_p$ has dimension $|\langle\mu_p,2\rho\rangle|=p(2m+1-p)$, and there is set theoretically a disjoint union in $L^+\mathcal{P}$-orbits given by
\[
S_p=\coprod_{i=0}^pL^+\mathcal{P}e^{-\mu_i}L^+\mathcal{P}/L^+\mathcal{P}.
\]
Moreover, the multiplication morphism 
\[
m: \widetilde{S}_p=L^+\mathcal{P}\times^{L^+\mathcal{Q}_p}(\overline{L^+\mathcal{Q}_p e^{\mu_p}L^+\mathcal{P}})/L^+\mathcal{P}\longrightarrow (\overline{L^+\mathcal{P} e^{-\mu_p}L^+\mathcal{P}})/L^+\mathcal{P}=S_p
\]
is a resolution in the sense of Theorem \ref{resolsimplycon}.
\end{cor}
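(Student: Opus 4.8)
The strategy is to obtain the three assertions from the general results of the preceding sections, specialised to the root system $\Sigma_0$ of type $C_m$, together with the explicit decomposition of $e^{-\mu_p}$ recorded in Lemma \ref{exampledecom}.

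For the dimension and the orbit decomposition I would apply Corollary \ref{specialschubert} to the antidominant coweight $-\mu_p=(-1^{(p)},0^{(m-p)})\in X_*(T)_I^-$: it gives simultaneously $\dim(S_p)=|\langle -\mu_p,2\rho\rangle|$ and $S_p=\coprod_{\nu}C_\nu(\mathcal P,\mathcal P)$, the union over antidominant coweights $\nu\leq -\mu_p$ in the antidominance order of $\Sigma_0$. Since $2\rho=(2m,2m-2,\dots,2)$, one computes $\langle\mu_p,2\rho\rangle=\sum_{i=1}^{p}(2m-2i+2)=p(2m+1-p)$, which settles the dimension. For the strata it remains to identify the antidominant coweights $\nu\leq -\mu_p$, which I would do by passing to dominant representatives. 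As $w_0=-\mathrm{id}$ in $W(C_m)$, the dominant representative of $W_0\cdot(-\mu_p)$ is $\mu_p=e_1+\dots+e_p$, and the question becomes which dominant coweights $\kappa$ satisfy $0\leq\kappa\leq\mu_p$ in the coroot order. This is an elementary check: a vector in $\mathbb Z^m$ lies in the positive coroot cone of $C_m$ exactly when all of its partial sums $v_1+\dots+v_k$ ($1\leq k\leq m$) are $\geq 0$, so for $\kappa$ dominant the condition that $\mu_p-\kappa$ have this property forces $\kappa\in\{0,1\}^m$, i.e.\ $\kappa=\mu_j$ with $0\leq j\leq p$. Since $C_{-\mu_i}(\mathcal P,\mathcal P)=L^+\mathcal P\,e^{-\mu_i}\,L^+\mathcal P/L^+\mathcal P$ by definition, the stated decomposition of $S_p$ into $p+1$ orbits follows.

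For the resolution I would invoke Theorem \ref{resolsimplycon} together with the corollary immediately following it, which yields an $L^+\mathcal P_{F'}$-equivariant resolution; taking $F'=F=\{m\}$ makes it $L^+\mathcal P$-equivariant. The first group of the resulting tower is the stabiliser of $S_p$, which I would identify with $\mathcal P$ itself: $\mathcal P=\mathcal P_{\{m\}}$ is a \emph{maximal} standard parahoric that visibly stabilises the $(\mathcal P,\mathcal P)$-Schubert variety $S_p$, hence, by the lemma preceding Theorem \ref{resolsimplycon}, it is the maximal such stabiliser. Lemma \ref{exampledecom} then provides the associated length-additive splitting $e^{-\mu_p}=w_{p,1}\cdot w_{p,2}$, with $w_{p,1}\in W_0$ and $w_{p,2}$ of length $p(p+1)/2$ representing $\mu_p$ under $W_{\aff}\to W_{\aff}/W_0\cong X_*(T)_I$. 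The remaining work is to run this datum through the construction of Theorem \ref{resolsimplycon}: using the length formula \eqref{rightminspecial} and the fact that $w_{p,2}$ represents the fundamental coweight $\mu_p$, one checks that a finite simple reflection $s_i$ stabilises the relevant Schubert variety precisely when it fixes $\mu_p$, i.e.\ precisely when $i\neq p$, so that the intermediate parahoric produced is exactly $\mathcal Q_p$; and one identifies the complementary part of the tower with the $(\mathcal Q_p,\mathcal P)$-Schubert variety $\overline{L^+\mathcal Q_p\,e^{\mu_p}\,L^+\mathcal P}/L^+\mathcal P$ (whose open orbit is $L^+\mathcal Q_p e^{\mu_p}L^+\mathcal P/L^+\mathcal P$, since $w_{p,2}\in e^{\mu_p}W_0$), which in this situation turns out to be smooth. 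Reassembling, $\widetilde S_p=L^+\mathcal P\times^{L^+\mathcal Q_p}\bigl(\overline{L^+\mathcal Q_p\,e^{\mu_p}\,L^+\mathcal P}/L^+\mathcal P\bigr)$ with $m$ the multiplication morphism, and $m$ is a resolution in the sense of Theorem \ref{resolsimplycon} by that theorem. As a consistency check, $\dim(L^+\mathcal P/L^+\mathcal Q_p)+\tfrac{p(p+1)}{2}=\bigl(m^2-\binom{p}{2}-(m-p)^2\bigr)+\tfrac{p(p+1)}{2}=p(2m+1-p)=\dim(S_p)$, as needed for birationality.

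The delicate step is the last one: turning the abstract output of Theorem \ref{resolsimplycon} applied to the splitting of Lemma \ref{exampledecom} into the explicit groups $\mathcal P$, $\mathcal Q_p$ and the explicit (smooth) Schubert variety $\overline{L^+\mathcal Q_p e^{\mu_p}L^+\mathcal P}/L^+\mathcal P$ is purely a matter of length bookkeeping inside $W_{\aff}=\mathbb Z^m\rtimes(\mathfrak S_m\rtimes\{\pm1\}^m)$ and rests on Lemma \ref{exampledecom}. The dimension and the orbit stratification, by contrast, are formal consequences of Corollary \ref{specialschubert} plus the elementary type-$C_m$ weight combinatorics.
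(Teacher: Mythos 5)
Your proposal is correct and follows essentially the same route as the paper: the dimension and orbit stratification are deduced from Corollary \ref{specialschubert} (you merely fill in the elementary type-$C_m$ computations of $\langle\mu_p,2\rho\rangle$ and of the antidominant coweights below $-\mu_p$), and the resolution statement is obtained by feeding the decomposition of Lemma \ref{exampledecom} into Theorem \ref{resolsimplycon}, exactly as in the paper's two-line proof. The extra bookkeeping you supply (identification of the stabilizer with $\mathcal{P}$, of the intermediate parahoric with $\mathcal{Q}_p$, and the dimension consistency check) is accurate and only makes explicit what the paper leaves implicit.
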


\begin{proof}
Corollary \ref{specialschubert} implies the statement on the dimension and stratification. The second statement follows from Lemma \ref{exampledecom}  and Theorem \ref{resolsimplycon}.
\end{proof}

\begin{rem}\label{redquotunit}
The maximal reductive quotient $\overline{\mathcal{P}}^{\red}$ of the special fiber $\overline{\mathcal{P}}=\mathcal{P}\otimes_{\mathcal{O}_K}k$ is isomorphic to the symplectic group of a $2m$-dimensional $k$-vector space as follows:  Every element of $\overline{\mathcal{P}}$ commutes with the linear map $u\otimes 1$ on $\Lambda_m\otimes k$, and we obtain a morphism of $k$-groups
\begin{equation}\label{primmorph}
\overline{\mathcal{P}}\longrightarrow \SL(\Lambda_m/u\Lambda_m),
\end{equation}
whose kernel is unipotent. Let $E=\Lambda_m/ \Lambda_m^{\vee}$, a $2m$-dimensional quotient space of $\Lambda_m/u\Lambda_m$. The reduction modulo $u$ of $u\phi$ induces a symplectic form $\phi^{\sympl}$ on $E$, and \eqref{primmorph} induces an isomorphism of $k$-groups
\begin{equation}
\overline{\mathcal{P}}^{\red}\overset{\cong}{\longrightarrow}\SP(E).\label{sympliso}
\end{equation}
\end{rem}

We will now give a linear algebra description of the resolution $m: \widetilde{S}_p\rightarrow S_p$.\\ Let $\mathcal{S}_p$ be the functor on the category of $k$-schemes whose $\Spec(R)$-valued points are the set of $R\pot{u}$-lattices $\Lambda$ such that
\begin{align*}
 \text{(i)} & \qquad \Lambda\subset u^{-1}\Lambda^{\vee} \;\; \text{and}\;\; u^{-1}\Lambda^{\vee}/\Lambda \;\; \text{is locally free on} \;\; R \;\; \text{of rank} \;\; 1 \qquad\;\; \\
\text{(ii)} & \qquad \wedge^n\Lambda=(u^{-m}) \\
\text{(iii)} & \qquad \inv(\Lambda_m\otimes_k R,\Lambda)\leq -\mu_p.
\end{align*}
Let $\widetilde{\mathcal{S}}_p$ be the functor on the category of $k$-schemes whose $\Spec(R)$-valued points are the set of pairs of $R\pot{u}$-lattices $(\Lambda',\Lambda)$ such that
\begin{align*}
 \text{(i)} & \qquad \Lambda \;\; \text{satisfies properties (i)-(iii) above} \\
\text{(ii)} & \qquad (\Lambda')^{\vee}\subset\Lambda' \;\; \text{and}\;\; \Lambda'/(\Lambda')^{\vee} \;\; \text{is locally free on} \;\; R \;\; \text{of rank} \;\; 2(m-p) \\
\text{(iii)} & \qquad \Lambda'\subset\Lambda \;\; \text{and}\;\; \Lambda/\Lambda' \;\; \text{is locally free on} \;\; R \;\; \text{of rank} \;\; p \\
\text{(iv)} & \qquad \Lambda'\subset\Lambda_m \;\; \text{and}\;\; \Lambda_m/\Lambda' \;\; \text{is locally free on} \;\; R \;\; \text{of rank} \;\; p.
\end{align*}
Conditions (ii) and (iv) say that $\Lambda'$ defines a point of $L^+\mathcal{P}/L^+\mathcal{Q}_p$.\\
The functors $\mathcal{S}_p$ and $\widetilde{\mathcal{S}}_p$ are representable by projective schemes over $k$, and there is a natural projection $m':\widetilde{\mathcal{S}}_p\rightarrow \mathcal{S}_p$. \\
Set $\Lambda_m'=\Lambda_m\cap (e^{-\mu_p}\Lambda_m)$. We have morphisms $\varphi: S_p\rightarrow\mathcal{S}_p$ and $\psi: \widetilde{S}_p\rightarrow\widetilde{\mathcal{S}}_p$ given on $\Spec(R)$-valued points by
\begin{align*}
\varphi: S_p(\Spec(R))\longrightarrow\mathcal{S}_p(\Spec(R))\\
g\cdot\mathcal{P}(R\pot{t}) \mapsto g\cdot(\Lambda_m\otimes_kR)
\end{align*}
and
\begin{align*}
\qquad\qquad\qquad\psi: \widetilde{S}_p(\Spec(R))\longrightarrow\widetilde{\mathcal{S}}_p(\Spec(R))\qquad\qquad\qquad\\
(c,h)\cdot(\mathcal{Q}_p(R\pot{t})\times\mathcal{P}(R\pot{t})) \mapsto (c\cdot(\Lambda_m'\otimes_kR),ch\cdot(\Lambda_m\otimes_kR)).
\end{align*}

\begin{prop}\label{demresolunit}
There is a commutative diagram
\[
\begin{CD} 
\widetilde{S}_p  @>{m}>> S_p \\ 
@V{\psi}V{\cong}V  @V{\varphi}V{\cong}V\\ 
 \widetilde{\mathcal{S}}_p @>{m'}>>  \mathcal{S}_p
\end{CD}
\]
with the vertical maps $\varphi$ and $\psi$ being isomorphisms. 
\end{prop}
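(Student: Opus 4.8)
\emph{Plan.} The square commutes for formal reasons, so it suffices to show that $\varphi$ and $\psi$ are isomorphisms. Unwinding the definitions, both $m'\circ\psi$ and $\varphi\circ m$ send a point $(c,h)\cdot(\mathcal{Q}_p(R\pot{t})\times\mathcal{P}(R\pot{t}))$ of $\widetilde{S}_p$ to the lattice $ch\cdot(\Lambda_m\otimes_kR)$: for $\varphi\circ m$ because $m$ is multiplication and $\varphi$ is the moduli identification of $\mathcal{F}$ restricted to $S_p$, for $m'\circ\psi$ because $m'$ forgets the $\Lambda'$-component. Since $\varphi$ and $\psi$ are natural transformations of functors of points, by Yoneda's lemma it is enough to check that each is bijective on $R$-points functorially in $R$; then it is an isomorphism of schemes.

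\emph{The isomorphism $\varphi$.} Here I would invoke the moduli description of $\mathcal{F}=LG/L^+\mathcal{P}$ recalled above from \cite[Thm.~4.1]{PRtwisted}: for the type $J=\{m\}$ an $R$-point of $\mathcal{F}$ is an $R\pot{u}$-lattice $\Lambda=\mathcal{L}_m$ with $\wedge^n\Lambda=(u^{-m})$ and with $u^{-1}\Lambda^\vee/\Lambda=\mathcal{L}_{m+1}/\mathcal{L}_m$ locally free of rank $1$, the remaining members of the selfdual chain --- in particular the inclusion $\Lambda^\vee=\mathcal{L}_{-m}\subset\Lambda$ --- being determined. Thus conditions (i) and (ii) of $\mathcal{S}_p$ amount to the condition $\Lambda\in\mathcal{F}$, except for the omitted inclusion $\Lambda^\vee\subset\Lambda$. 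Now $S_p=\overline{L^+\mathcal{P}e^{-\mu_p}L^+\mathcal{P}}/L^+\mathcal{P}$, and the explicit matrix \eqref{ele} together with \eqref{kottunit} yields $\inv(\Lambda_m,e^{-\mu_p}\Lambda_m)=-\mu_p$; by Corollary~\ref{specialschubert} the closure of this cell is the locus where $\inv(\Lambda_m\otimes_kR,\Lambda)\le-\mu_p$ in the antidominance order, which is condition (iii). One checks that, $-\mu_p$ being a small enough coweight, condition (iii) forces the omitted inclusion $\Lambda^\vee\subset\Lambda$; hence (i)--(iii) cut out $S_p$ inside $\mathcal{F}$ and $\varphi$ is an isomorphism.

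\emph{The isomorphism $\psi$.} I would argue in three steps. (1) Using the isomorphism $\overline{\mathcal{P}}^{\red}\cong\SP(E)$ with $E=\Lambda_m/\Lambda_m^\vee$ from Remark~\ref{redquotunit}, together with the description of $L^+\mathcal{P}/L^+\mathcal{Q}_p$ in Remark~\ref{reductivequotient}, one checks that the image of $\Lambda_m'$ in $E$ is a coisotropic subspace stabilized precisely by the standard parabolic attached to $\{s_1,\dots,\widehat{s}_p,\dots,s_m\}$; hence $L^+\mathcal{Q}_p$ is the stabilizer of $\Lambda_m'$ in $L^+\mathcal{P}$, and reduction modulo $\Lambda_m^\vee$ identifies the functor of lattices $\Lambda'$ obeying conditions (ii) and (iv) of $\widetilde{\mathcal{S}}_p$ with $L^+\mathcal{P}/L^+\mathcal{Q}_p$, equivariantly, carrying the base point to $[\Lambda_m']$. (2) $\psi$ is then well defined with values in $\widetilde{\mathcal{S}}_p$: it respects the $L^+\mathcal{Q}_p$-equivalence because $L^+\mathcal{Q}_p$ fixes $\Lambda_m'$; condition (i) holds because its $\Lambda$-component is $m([c,h])\in S_p$ (Theorem~\ref{resolsimplycon}, Corollary~\ref{exfundcounitary}); conditions (ii) and (iv) follow from step (1); and condition (iii) follows from the computation --- reducible to the open cell by \eqref{ele} --- that $\Lambda_m'$ is contained, with locally free cokernel of rank $p$, in every lattice represented by a point of $\overline{L^+\mathcal{Q}_pe^{\mu_p}L^+\mathcal{P}}/L^+\mathcal{P}$. (3) Both $\widetilde{S}_p=L^+\mathcal{P}\times^{L^+\mathcal{Q}_p}(\overline{L^+\mathcal{Q}_pe^{\mu_p}L^+\mathcal{P}}/L^+\mathcal{P})$ and $\widetilde{\mathcal{S}}_p$ are $L^+\mathcal{P}$-equivariantly fibered over $L^+\mathcal{P}/L^+\mathcal{Q}_p$ --- the latter via $(\Lambda',\Lambda)\mapsto\Lambda'$, by step (1) --- and $\psi$ commutes with the fibrations, so by equivariance $\psi$ is an isomorphism iff it is one over the base point $[\Lambda_m']$. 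There the source fibre is the $(\mathcal{Q}_p,\mathcal{P})$-Schubert variety $\overline{L^+\mathcal{Q}_pe^{\mu_p}L^+\mathcal{P}}/L^+\mathcal{P}$, the target fibre is $\{\Lambda\in\mathcal{S}_p:\Lambda_m'\subset\Lambda,\ \Lambda/\Lambda_m'\ \text{locally free of rank}\ p\}$, and $\psi$ restricts to the natural inclusion of the former into $\mathcal{F}$; that this is an isomorphism onto the target fibre is shown exactly as for $\varphi$, by computing the open cell from $e^{\mu_p}$ and $\Lambda_m'$ and taking closures via Corollary~\ref{specialschubert}. Together with the commutativity, this proves the proposition.

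\emph{Main obstacle.} The substance is the explicit bookkeeping that matches the linear-algebra conditions (i)--(iv) of $\mathcal{S}_p$ and $\widetilde{\mathcal{S}}_p$ with the group-theoretic data: that the minuscule-type boundedness and quotient-rank conditions automatically encode the selfduality and lattice-chain constraints built into $\mathcal{F}$ and the parabolic constraints built into $L^+\mathcal{P}/L^+\mathcal{Q}_p$, and that the closed subfunctors they define are exactly the relevant Schubert varieties. This is a concrete computation with the standard chain $(\Lambda_i)_i$ and the matrix \eqref{ele}; the conceptual ingredients --- the moduli description of $\mathcal{F}$, the isomorphism $\overline{\mathcal{P}}^{\red}\cong\SP(E)$, and the order-theoretic Corollary~\ref{specialschubert} --- are already available.
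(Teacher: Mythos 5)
Your treatment of $\varphi$ and of the commutativity is fine (the paper itself regards $\varphi$ as evident), and the reduction of $\psi$ to a statement about one fibre of the projection to $L^+\mathcal{P}/L^+\mathcal{Q}_p$ is a legitimate use of $L^+\mathcal{P}$-equivariance. The genuine gap is in your step (3): the assertion that the fibrewise isomorphism is "shown exactly as for $\varphi$, by computing the open cell from $e^{\mu_p}$ and $\Lambda_m'$ and taking closures via Corollary~\ref{specialschubert}" does not go through. Corollary~\ref{specialschubert} only describes closures of $L^+\mathcal{P}$-orbits, i.e.\ loci of the form $\inv(\Lambda_m\otimes R,\Lambda)\leq-\mu$; but the source fibre here is the closure of an $L^+\mathcal{Q}_p$-orbit, and the target fibre is cut out inside $\mathcal{S}_p$ by the lattice-containment condition $\Lambda_m'\otimes R\subset\Lambda$ with locally free quotient of rank $p$, which is not an invariant-inequality condition and is not governed by that corollary. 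To conclude that the injection of $\overline{L^+\mathcal{Q}_pe^{\mu_p}L^+\mathcal{P}}/L^+\mathcal{P}$ into this containment locus is an isomorphism of schemes you must first know that the locus is irreducible and reduced (indeed smooth) of dimension $p(p+1)/2$; nothing in your argument supplies this, and a set-theoretic or $k$-point bijection would in any case not rule out a non-reduced scheme structure on the target.

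That missing input is precisely the linear-algebra content of the paper's proof: one identifies $\text{pr}_2:\widetilde{\mathcal{S}}_p\rightarrow\mathscr{G}_{2m-p}(E,\phi^{\sympl})$ as a bundle whose fibre is the orthogonal Grassmannian $\mathscr{G}_p(H,\phi^{\ort})$ of isotropic $p$-planes in the $(2p+1)$-dimensional space $H=(e^{-\mu_p}\Lambda_m+u^{-1}\Lambda_m^{\vee})/\Lambda_m'$, so that $\widetilde{\mathcal{S}}_p$ is irreducible and smooth of dimension $p(2m+1-p)$; one then checks that $\psi$ is a projective monomorphism, hence a closed immersion by [EGA4, Prop.~8.11.5], and since source and target are irreducible and smooth of the same dimension (the source by Corollary~\ref{exfundcounitary}), $\psi$ is an isomorphism. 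If you insert the identification of your target fibre with $\mathscr{G}_p(H,\phi^{\ort})$ (or equivalently the bundle statement), your fibration argument can be completed, but as written the key step is assumed rather than proved.
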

\begin{proof}
The diagram commutes, and the map $\varphi$ evidently is an isomorphism. An easy calculation shows that $\psi$ is a monomorphism. Since $\psi$ is also projective, it is by [EGA4, Prop. 8.11.5] a closed immersion. We show that $\widetilde{S}_p$ and $\widetilde{\mathcal{S}}_p$ are irreducible and smooth of the same dimension, which will finish the proof.\\ By Corollary \ref{exfundcounitary} the scheme $\widetilde{S}_p$ is irreducible and smooth of dimension $p(2m+1-p)$. Consider the commutative diagram 
\[
\begin{tikzpicture} 
\matrix(a)[matrix of math nodes, 
row sep=3em, column sep=2.5em, 
text height=1.5ex, text depth=0.45ex] 
{\widetilde{S}_p & \widetilde{\mathcal{S}}_p\\ 
L^+\mathcal{P}/L^+\mathcal{Q}_p & \mathscr{G}_{2m-p}(E,\phi^{\sympl}),\\}; 
\path[->](a-1-1) edge node[above] {$\psi$} (a-1-2); 
\path[->](a-1-1) edge node[right] {$\text{pr}_1$} (a-2-1); 
\path[->](a-1-2) edge node[right] {$\text{pr}_2$} (a-2-2);
\path[->](a-2-1) edge node[above] {$j$} (a-2-2); 
\end{tikzpicture}
\]
where $\mathscr{G}_{2m-p}(E,\phi^{\sympl})$ is the finite-dimensional Gra§mannian of $(2m-p)$-dimensional isotropic subspaces $E'$ of the symplectic space $(E,\phi^{\sympl})$, i.e.  $(E')^{\perp}\subset E'$ with respect to the symplectic form $\phi^{\sympl}$ (cf. Remark \ref{redquotunit}). The vertical arrows are given by $\text{pr}_1: (c,h)\mapsto c$ and $\text{pr}_2: (\Lambda',\Lambda)\mapsto \Lambda'/\Lambda_m^{\vee}$. The map $j$ is given by $c\mapsto c\cdot\Lambda_m'/\Lambda_m^{\vee}$. The scheme $L^+\mathcal{P}/L^+\mathcal{P}_s$  maps via \eqref{sympliso} isomorphically onto $\SP(E)/Q_p$, where $Q_p$ is the parabolic subgroup of $\SP(E)$ with Weyl group equal to the stabilizer of $\mu_p$ in $W_0$. We obtain a factorization
$$\begin{tikzpicture} 
\matrix(a)[matrix of math nodes, 
row sep=2.5em, column sep=2.5em, 
text height=1.5ex, text depth=0.45ex] 
{L^+\mathcal{P}/L^+\mathcal{Q}_p & \mathscr{G}_{2m-p}(E,\phi^{\sympl})\\ 
\SP(E)/Q_p\\}; 
\path[->](a-1-1) edge node[left] {$\cong$} (a-2-1); 
\path[->](a-2-1) edge node[below right] {$\cong$} (a-1-2); 
\path[->](a-1-1) edge node[above] {$j$} (a-1-2); 
\end{tikzpicture} $$
and hence $j$ is an isomorphism. Consider the $(2p+1)$-dimensional $k$-vector space $H=(e^{-\mu_p}\Lambda_m+ u^{-1}\Lambda_m^{\vee})/\Lambda_m'$. The reduction modulo $u$ of the form $u^2\phi$ induces an orthogonal form $\phi^{\ort}$ on $H$. Denote by $\mathscr{G}_p(H,\phi^{\ort})$ the finite-dimensional Gra§mannian of $p$-dimensional isotropic subspaces $H'$ of the orthogonal space $(H,\phi^{\ort})$. Then the morphism $\text{pr}_2$ is a $\mathscr{G}_p(H,\phi^{\ort})$-bundle, and thus $\widetilde{\mathcal{S}}_p$ is irreducible and smooth of dimension 
\[
(p(2m-p) - {p(p-1)\over 2})+(p(p+1) - {p(p+1)\over 2})=p(2m+1-p).
\]
This proves the proposition.
\end{proof}

\section{Applications to some local models of Shimura varieties}
We give an application of the resolution constructed in Theorem \ref{resolsimplycon} to the theory of local models of Shimura varieties in the case of a quasi-split unitary group with a special maximal parahoric level structure. In this case, the resolution is identified with an irreducible component of the special fiber of some model $\mathcal{M}^{\wedge}_s$. This is in analogy with the work of G. Pappas and M. Rapoport in \cite{PRsplitting} and N. Kr\"amer in \cite{Kraemer}. We deduce some geometric consequences for the local model which were obtained earlier by K. Arzdorf in \cite{Arzdorf} using direct computations. 

First we fix some notation. Let $L/L_0$ be a quadratic totally ramified extension of local fields with ring of integers $\mathcal{O}_L/\mathcal{O}_{L_0}$ and algebraically closed residue field $k$ of characteristic $\neq 2$. Fix a uniformizer $\pi_0\in\mathcal{O}_{L_0}$, and a uniformizer $\pi\in\mathcal{O}_L$ with $\pi^2=\pi_0$. Denote by $\bar{\cdot}$ the non-trivial element of $\Gal(L/L_0)$. Let $(W,\phi)$ be a $L/L_0$-hermitian space of odd dimension $n=2m+1\geq 3$. We define the non-degenerate symmetric $L_0$-bilinear form $(\str,\str):W\times W\rightarrow L_0$ by
\[
(x,y)\define {1\over 2}\text{Tr}(\phi(x,y)).
\]
For a $\mathcal{O}_L$-lattice $\lambda$, denote by $\lambda^{\vee}$ the dual with respect to $\phi$, and by $\hat{\lambda}$ the dual with respect to $(\str,\str)$. Then $\hat{\lambda}=\pi^{-1}\lambda^{\vee}$. Assume that $\phi$ splits, i.e. there is a basis $e_1,\ldots,e_n$ such that 
\[ 
 \phi(e_i,e_{n-j+1})=\delta_{i,j} \qquad \forall \;\; i,j=1,...,n.
\] 
For $i=0,\ldots,n-1$, define the $\mathcal{O}_L$-lattices
\[
\lambda_i\define\spanned_{\mathcal{O}_L}\{\pi^{-1}e_1,...,\pi^{-1}e_i,e_{i+1},...,e_n\},
\]
and complete $\{\lambda_0,\ldots,\lambda_{n-1}\}$ into a selfdual periodic lattice chain $\lambda_\bullet=\{\lambda_i\}_{i\in\mathbb{Z}}$. \\
For any $\mathcal{O}_L$-scheme $S$ and any $\mathcal{O}_L$-lattice $\lambda$, set $\lambda_S=\lambda\otimes_{\mathcal{O}_{L_0}}\mathcal{O}_S$ and likewise, write $(\str,\str)_S= (\str,\str)\otimes_{\mathcal{O}_{L_0}}\mathcal{O}_S$. Denote by $\Pi$ the operator $\pi\otimes 1$ on $\lambda_S$. Note that $\hat{\lambda}_{m,S}=\lambda_{m+1,S}$, i.e. $\lambda_{m,S}$ and $\lambda_{m+1,S}$ are in duality with respect to $(\str,\str)_S$.  

\subsection{The local model}
We follow \cite[\S1.e]{PRunitary} for the definition of the local model. See also \cite{Arzdorf} in this particular case. 

Fix non-negative integers $s<r$ with $r+s=n$. Let $M^{\naive}_s=M^{\naive}_{\{m\},s}$ be the functor on category of $\mathcal{O}_L$-schemes defined as follows: for any $\mathcal{O}_L$-scheme $S$, let $M^{\naive}_s(S)$ be the set of $\mathcal{O}_L\otimes_{\mathcal{O}_{L_0}}\mathcal{O}_S$-submodules 
\[
\mathcal{E}_S\subset \lambda_{m,S}, \qquad \mathcal{E}'_S\subset \lambda_{m+1,S}, 
\]
which are locally (on $S$) direct summands of rank $n$, subject to the conditions (N1)-(N3) below: \smallskip \\
(N1) There is a commutative diagram induced by the canonical lattice inclusions:
\[
\begin{tikzpicture} 
\matrix(a)[matrix of math nodes, 
row sep=2em, column sep=3.5em, 
text height=1.5ex, text depth=0.45ex] 
{ \lambda_{m,S} & \lambda_{m+1,S} & \pi^{-1}\lambda_{m,S}\\ 
\mathcal{E} & \mathcal{E}' & \pi^{-1}\mathcal{E}\\}; 
\path[->](a-1-1) edge node[above] {$i$} (a-1-2); 
\path[right hook->](a-2-1) edge (a-1-1); 
\path[right hook->](a-2-2) edge (a-1-2);
\path[->](a-2-1) edge (a-2-2); 
\path[->](a-1-2) edge node[above] {$j$} (a-1-3); 
\path[->](a-2-2) edge (a-2-3);
\path[right hook->](a-2-3) edge (a-1-3); 
\end{tikzpicture}
 \]
(N2) The vector bundle $\mathcal{E}'=\mathcal{E}^{\perp}$ is the orthogonal complement of $\mathcal{E}$ with respect to
\[
(\str,\str)_S:\lambda_{m,S}\times\lambda_{m+1,S}\longrightarrow\mathcal{O}_S.
\] 
(N3) The characteristic polynomial of $\Pi|{\mathcal{E}}$ is given by 
\[
\text{det}(T-\Pi|{\mathcal{E}})=(T-\pi)^s(T+\pi)^r \in \mathcal{O}_S[T].
\]
The functor $M_s^{\naive}$ is a closed subfunctor of a product of finite-dimensional Gra§mannians and hence representable by a projective $\mathcal{O}_L$-scheme. \\
We consider the closed subscheme $M^{\wedge}_s\subset M^{\naive}_s$ which to any $\mathcal{O}_L$-scheme $S$, associates the set of pairs $(\mathcal{E},\mathcal{E}')\in M^{\naive}_s(S)$ such that condition (W) below holds:\smallskip \\
(W) The exterior powers
\[
\bigwedge^{r+1}(\Pi-\pi|\mathcal{E})=0 \qquad\text{and}\qquad   \bigwedge^{s+1}(\Pi+\pi|\mathcal{E})=0,
\]
vanish, and the same holds true with $\mathcal{E}$ replaced by $\mathcal{E}'$.\\
The scheme $M^{\naive}_s$ is called the \emph{naive local model of signature $(r,s)$} (associated to the group of unitary similitudes $\GU(\phi)$ and $J=\{m\}$), and $M^{\wedge}_s$ is called the \emph{wedge local model of signature $(r,s)$}. 

\begin{lem}\label{genisowedgenaive}
On generic fibers $M^{\wedge}_{s,\eta}=M^{\naive}_{s,\eta}$, and these are isomorphic to the finite-dimensional Gra§mannian $\mathscr{G}_{s,n}(W)$ of $s$-dimensional subspaces of $W$.
\end{lem}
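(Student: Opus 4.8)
The plan is to base change everything to the generic fibre, where the ramified quadratic extension $L/L_0$ splits and all the defining conditions become transparent. Fix an $L$-algebra $R$; since $\chara(k)\neq 2$ the extension $L/L_0$ is separable, so $\mathcal{O}_L\otimes_{\mathcal{O}_{L_0}}R\cong R\times R$ via the structure map $\iota_1\colon\mathcal{O}_L\to R$ and its Galois conjugate $\iota_2=\iota_1\circ\bar{\cdot}$, and the two idempotents split $\Pi=\pi\otimes 1$ as $(\varpi,-\varpi)$ with $\varpi:=\iota_1(\pi)\in R^{\times}$. Hence $\lambda_{m,R}$ and $\lambda_{m+1,R}$ decompose into $\Pi$-eigenspaces, $\lambda_{m,R}=V_+\oplus V_-$ and $\lambda_{m+1,R}=V_+'\oplus V_-'$, each summand free of rank $n$. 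First I would observe that the canonical inclusions $i\colon\lambda_{m,R}\to\lambda_{m+1,R}$ and $j\colon\lambda_{m+1,R}\to\pi^{-1}\lambda_{m,R}$ become isomorphisms on the generic fibre, since their cokernels are base changes along $R$ of the $\pi_0$-torsion $\mathcal{O}_{L_0}$-modules $\lambda_{m+1}/\lambda_m$ resp.\ $\pi^{-1}\lambda_m/\lambda_{m+1}$, which vanish after $\otimes_{\mathcal{O}_{L_0}}R$ because $\pi_0=\pi^2\in R^{\times}$; I then use $i$ to identify $V_\pm'=V_\pm$. A short computation using that $\Pi$ is skew-adjoint for $(\str,\str)_R$ (this is where $\bar\pi=-\pi$ and the hermitian symmetry of $\phi$ enter) shows $V_+$ and $V_-$ are totally isotropic, so $(\str,\str)_R$ restricts to a perfect pairing $V_+\times V_-\to R$.

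Next I would unwind the moduli data. An $\mathcal{O}_L\otimes_{\mathcal{O}_{L_0}}R$-submodule $\mathcal{E}\subset\lambda_{m,R}$ is a direct sum $\mathcal{E}_+\oplus\mathcal{E}_-$ along the idempotents with $\mathcal{E}_\pm\subset V_\pm$ an $R$-direct summand; the total rank $n$ together with (N3) — the characteristic polynomial of $\Pi|_{\mathcal{E}}$ equals $(T-\varpi)^{\mathrm{rk}\,\mathcal{E}_+}(T+\varpi)^{\mathrm{rk}\,\mathcal{E}_-}$ and must be $(T-\pi)^{s}(T+\pi)^{r}$, while $2\varpi\in R^{\times}$ — forces $\mathrm{rk}\,\mathcal{E}_+=s$, $\mathrm{rk}\,\mathcal{E}_-=r$. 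Using isotropy of $V_\pm$ one computes $\mathcal{E}^{\perp}=(\mathcal{E}_-)^{\perp}\oplus(\mathcal{E}_+)^{\perp}$ inside $V_+\oplus V_-$, so by (N2) the $V_+$-component of $\mathcal{E}'$ has rank $s$ and the $V_-$-component rank $r$; then (N1), i.e.\ $i(\mathcal{E})\subset\mathcal{E}'$, reads $\mathcal{E}_+\subset(\mathcal{E}_-)^{\perp}$ and $\mathcal{E}_-\subset(\mathcal{E}_+)^{\perp}$, and since the ranks already agree and all the modules are direct summands these inclusions are equalities; the $j$-part of (N1) is then automatic. Thus $\mathcal{E}_-=(\mathcal{E}_+)^{\perp}$ and $\mathcal{E}'=i(\mathcal{E})$ are determined by $\mathcal{E}_+$, so $M^{\naive}_{s,\eta}$ is identified with the functor $R\mapsto\{\text{rank-}s\text{ direct summands of }V_+\}$; since $V_+\cong W\otimes_{L,\iota_1}R$ on the generic fibre, this is exactly $\mathscr{G}_{s,n}(W)$. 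Conversely, every rank-$s$ summand $\mathcal{E}_+\subset V_+$, completed by $\mathcal{E}_-:=(\mathcal{E}_+)^{\perp}$ and $\mathcal{E}':=i(\mathcal{E}_+\oplus\mathcal{E}_-)$, satisfies (N1)--(N3), so the identification is an isomorphism.

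Finally I would check that the wedge condition (W) is vacuous on the generic fibre: for $\mathcal{E}=\mathcal{E}_+\oplus\mathcal{E}_-$ as above, $\Pi-\pi$ vanishes on $\mathcal{E}_+$ and equals $-2\varpi\in R^{\times}$ on $\mathcal{E}_-$, so its image on $\mathcal{E}$ is free of rank $r$ and $\bigwedge^{r+1}(\Pi-\pi|_{\mathcal{E}})=0$; symmetrically $\bigwedge^{s+1}(\Pi+\pi|_{\mathcal{E}})=0$, and the same holds with $\mathcal{E}$ replaced by $\mathcal{E}'=i(\mathcal{E})$. Hence $M^{\wedge}_{s,\eta}=M^{\naive}_{s,\eta}$, and combined with the previous paragraph both are isomorphic to $\mathscr{G}_{s,n}(W)$. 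The only genuinely delicate point is the bookkeeping in the middle step — keeping track of the identifications $\lambda_{m,R}\cong\lambda_{m+1,R}\cong W\otimes_{L_0}R$ furnished by $i,j$ and verifying that (N1) together with (N2) rigidifies the pair $(\mathcal{E}',\mathcal{E}_-)$ down to the single summand $\mathcal{E}_+$; everything else is the routine splitting of a ramified quadratic extension after inverting the uniformizer.
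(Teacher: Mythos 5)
Your proposal is correct and follows essentially the same route as the paper: split $L\otimes_{L_0}\mathcal{O}_S\cong\mathcal{O}_S\times\mathcal{O}_S$, decompose $\mathcal{E}$ and $\mathcal{E}'$ into $\Pi$-eigenspaces, read off the ranks from (N3), note that (W) is then automatic, and use (N2) together with (N1) to see that the whole datum is determined by $\mathcal{E}^+$, giving the isomorphism onto $\mathscr{G}_{s,n}(W)$. The only difference is that you spell out the duality and rank bookkeeping (e.g. $\mathcal{E}^-=(\mathcal{E}^+)^{\perp}$ and the identification of $\lambda_{m,\eta}$ with $\lambda_{m+1,\eta}$ via $i$) that the paper's proof leaves implicit.
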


\begin{proof}
Let $S$ be a scheme over $L$. We have an isomorphism of $\mathcal{O}_S$-algebras
\begin{align*}
L\otimes_{L_0}\mathcal{O}_S & \overset{\cong}{\longrightarrow} \mathcal{O}_S\times\mathcal{O}_S\\
(x,y) & \longmapsto (xy,\bar{x}y).
\end{align*}
Let $(\mathcal{E},\mathcal{E}')\in M^{\naive}_{s,\eta}(S)$. Let $\mathcal{E}=\mathcal{E}^+\oplus\mathcal{E}^-$ be the decomposition according to the above isomorphism of $\mathcal{O}_S$-algebras. The operator $\Pi$ acts through $+\pi$ on $\mathcal{E}^+$ and through $-\pi$ on $\mathcal{E}^-$. By Condition (N3) the summand $\mathcal{E}^+$ has rank $s$, and $\mathcal{E}^-$ has rank $r$. The same is true for $\mathcal{E}'$, and hence Condition (W) is automatic, i.e. $M^{\wedge}_{s,\eta}(S)=M^{\naive}_{s,\eta}(S)$. By Condition (N2) the module $\mathcal{E}'$ is already determined by $\mathcal{E}$. On the other hand, $\mathcal{E}^-$ is already determined by $\mathcal{E}^+$, and we obtain an isomorphism
\begin{align*}
M^{\naive}_{s,\eta}(S) & \overset{\cong}{\longrightarrow} \mathscr{G}_{s,n}(W)\\
(\mathcal{E},\mathcal{E}') &\longmapsto \mathcal{E}^+.
\end{align*}
\end{proof}

There is a further variant: let $M^{\loc}_s$ be the scheme theoretic closure of the generic fiber $M^{\naive}_{s,\eta}$ in $M^{\naive}_s$. The scheme $M^{\loc}$ is called the \emph{local model of signature $(r,s)$}. We have closed immersions of projective $\mathcal{O}_L$-schemes
\[
M^{\loc}_s\subset M^{\wedge}_s\subset M^{\naive}_s,
\]
which are equalities on generic fibers.

\subsection{A candidate for a semistable resolution of $M^{\wedge}_s$} 
In analogy with \cite{PRsplitting} and \cite{Kraemer}, let $\mathcal{M}^{\wedge}_s$ be the functor which is defined as follows: for any $\mathcal{O}_L$-scheme $S$, let $\mathcal{M}^{\wedge}_s(S)$ be the set of triples $(\mathcal{E},\mathcal{E}',\mathcal{G}')$ with $(\mathcal{E},\mathcal{E}')\in M^{\wedge}_s(S)$ and a $\mathcal{O}_L\otimes_{\mathcal{O}_{L_0}}\mathcal{O}_S$-submodule 
\[
\mathcal{G}'\subset \mathcal{E}', 
\]
which is locally (on $S$) a direct summand of rank $s$, subject to the conditions (R1) and (R2) below:\smallskip \\
(R1) There is an inclusion $(\Pi+\pi)\mathcal{E}'\subset\mathcal{G}'$.\smallskip \\
(R2) The operator $(\Pi-\pi)|\mathcal{G}'= 0$ vanishes.

The functor is as a closed subfunctor of a product of finite-dimensional Gra§mannians representable by a projective $\mathcal{O}_L$-scheme. Denote by 
\begin{align}\label{semistableresmorph}
 \pi_s: \mathcal{M}^{\wedge}_s &\longrightarrow M^{\wedge}_s \\
 \notag (\mathcal{E},\mathcal{E}',\mathcal{G}') &\longmapsto (\mathcal{E},\mathcal{E}'),  
 \end{align}
 the canonical projection. The morphism $\pi_s$ is an isomorphism on generic fibers: With the notation from the proof of Lemma \ref{genisowedgenaive}, we have $\mathcal{G}'=(\mathcal{E}')^+$ on the generic fiber.\\
On the special fiber of the wedge local model $\overline{M}^{\wedge}_s=M^{\wedge}_s\otimes k$ only condition (W) above depends on $s$, and for $i<j$, there is natural closed immersion $\overline{M}^{\wedge}_i\subset\overline{M}^{\wedge}_j$. Hence, we obtain a chain of closed immersion 
\begin{equation*}
\{*\}=\overline{M}^{\wedge}_0\subset\overline{M}^{\wedge}_1\subset\ldots\subset\overline{M}^{\wedge}_{s-1}\subset\overline{M}^{\wedge}_s.
\end{equation*}
For $i=0,\ldots,s$, let $Z_i$ be the strict transform of the open subscheme\footnote{By definition $\overline{M}^{\wedge}_{-1}=\varnothing$ is the empty set.} $\overline{M}^{\wedge}_i\backslash\overline{M}^{\wedge}_{i-1}\subset \overline{M}^{\wedge}_i$ under the special fiber $\overline{\pi}_s:\overline{\mathcal{M}}^{\wedge}_s\rightarrow \overline{M}^{\wedge}_s$ of the morphism \eqref{semistableresmorph}.

\begin{thm}\label{locmodthm}
 \emph{(i)} The schemes $Z_0,\ldots,Z_s$ are irreducible and generically smooth of dimension $rs$, and hence the special fiber 
 $
 \overline{\mathcal{M}}^{\wedge}_s=\cup_{i=0}^sZ_i
 $
 is the union of $s+1$ divisors on the scheme $\mathcal{M}^{\wedge}_s$. The divisors $Z_0$, $Z_s$ are smooth, and the restriction $\overline{\pi}_s|_{Z_s}:Z_s\rightarrow \overline{M}^{\wedge}_s$ is a surjective birational projective morphism. In particular, $\overline{M}^{\wedge}_s$ is irreducible and contains a non-empty open reduced subscheme. \smallskip \\
 \emph{(ii)} The special fiber of the local model $\overline{M}^{\loc}_s$ is equal to the reduced locus $(\overline{M}^{\wedge}_s)_{\text{\rm red}}$, and hence $\overline{\pi}_s|_{Z_s}$ factors through $\overline{M}^{\loc}_s$ defining the morphism $\theta:Z_s\rightarrow \overline{M}^{\loc}_s$. Under a suitable\footnote{See \S \ref{lastsect} below.} embedding in a twisted affine Gra§mannian, the morphism $\theta$ is identical to the equivariant affine Demazure resolution in Proposition \ref{demresolunit}.
\end{thm}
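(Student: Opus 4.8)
The plan is to carry out the entire comparison inside the twisted affine Grassmannian $\mathcal{F}=\mathcal{F}_P$ of \S\ref{unitary}, identifying $\overline{M}^{\loc}_s$ with the Schubert variety $S_s$ attached to $e^{-\mu_s}$ and $Z_s$ with the source $\widetilde{\mathcal{S}}_s=\widetilde{S}_s$ of the Demazure resolution of Proposition \ref{demresolunit}. First I would invoke the $L^+\mathcal{P}$-equivariant closed embedding $\overline{M}^{\naive}_s\hookrightarrow\mathcal{F}$ of \cite[\S 3.c]{PRunitary}. Since $M^{\wedge}_s$ is topologically flat by \cite{Smithling}, the reduced subscheme $(\overline{M}^{\wedge}_s)_{\mathrm{red}}=\overline{M}^{\loc}_s$ is a closed $L^+\mathcal{P}$-stable subvariety of $\mathcal{F}$; under the dictionary between the $\mathcal{O}_L$-modules of \S\ref{unitary} and lattices in $\mathcal{F}$, the conditions (N3) and (W) translate into the bound $\inv(\Lambda_m,\,\cdot\,)\leq-\mu_s$, so this subvariety is irreducible of dimension $rs=s(2m+1-s)=\dim S_s$ (Corollary \ref{exfundcounitary}) and contains the point $e^{-\mu_s}$, whence $(\overline{M}^{\wedge}_s)_{\mathrm{red}}=S_s$. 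As only condition (W) depends on $s$, the chain $\overline{M}^{\wedge}_0\subset\dots\subset\overline{M}^{\wedge}_s$ reduces to $S_0\subset\dots\subset S_s$, and Corollary \ref{exfundcounitary} yields the stratification $(\overline{M}^{\wedge}_s)_{\mathrm{red}}=\coprod_{i=0}^{s}C_{e^{-\mu_i}}$, with each stratum $U_i:=\overline{M}^{\wedge}_i\setminus\overline{M}^{\wedge}_{i-1}$ reducing to the $L^+\mathcal{P}$-orbit $C_{e^{-\mu_i}}$.

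Next I would translate the moduli problem defining $\mathcal{M}^{\wedge}_s$ by the same dictionary: a point is a pair $(\Lambda',\Lambda)$, where $\Lambda$ is as above and, by (R1)--(R2), $\Lambda'$ satisfies $(\Lambda')^{\vee}\subset\Lambda'$, $\Lambda'\subset\Lambda\cap\Lambda_m$ with prescribed colengths; in particular $\Lambda'$ defines a point of $L^+\mathcal{P}/L^+\mathcal{Q}_p$ for the appropriate $p$. Stratifying $\overline{\mathcal{M}}^{\wedge}_s$ by the $\overline{\pi}_s$-preimages of the strata $C_{e^{-\mu_i}}$, the fibre of $\overline{\pi}_s$ over a $k$-point of $C_{e^{-\mu_i}}$ is the variety of submodules $\mathcal{G}'$ compatible with (R1)--(R2), which --- exactly as in the linear-algebra computation in the proof of Proposition \ref{demresolunit}, with the auxiliary symplectic and orthogonal spaces of Remark \ref{redquotunit} --- is an isotropic Grassmann bundle. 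A dimension count over each stratum should show that the corresponding locally closed piece of $\overline{\mathcal{M}}^{\wedge}_s$ is irreducible, generically smooth, of pure dimension $rs$, with Zariski closure $Z_i$; hence $\overline{\mathcal{M}}^{\wedge}_s=\bigcup_{i=0}^{s}Z_i$ is a union of $s+1$ divisors, $\mathcal{M}^{\wedge}_s$ is flat over $\mathcal{O}_L$ (its generic fibre being the irreducible $\mathscr{G}_{s,n}(W)$ of the same dimension $rs$), and $Z_0$, $Z_s$ --- being Grassmann bundles over a point, resp. over the smooth $L^+\mathcal{P}/L^+\mathcal{Q}_s$ --- are smooth.

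Over the open cell $C_{e^{-\mu_s}}$ the conditions (R1)--(R2) determine $\mathcal{G}'=(\Pi+\pi)\mathcal{E}'$ uniquely, so $\overline{\pi}_s|_{Z_s}$ is birational, and being projective with closed image containing $C_{e^{-\mu_s}}$ it is surjective onto $\overline{M}^{\wedge}_s$. Thus $\overline{M}^{\wedge}_s$ is irreducible and generically reduced, which together with the topological flatness of $M^{\wedge}_s$ identifies $\overline{M}^{\loc}_s=(\overline{M}^{\wedge}_s)_{\mathrm{red}}=S_s$; this is normal by Theorem \ref{normality}, since $G=\SU$ is simply connected and splits over a tamely ramified extension with $\chara(k)\neq 2$, so the scheme-theoretic image of $\overline{\pi}_s|_{Z_s}$ in $\overline{M}^{\loc}_s$ is the whole reduced $S_s$. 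Hence $\overline{\pi}_s|_{Z_s}$ factors through a morphism $\theta\colon Z_s\to\overline{M}^{\loc}_s$. Finally, the assignment $(\mathcal{E},\mathcal{E}',\mathcal{G}')\mapsto(\Lambda',\Lambda)$ identifies $(Z_s,\theta)$ with $(\widetilde{\mathcal{S}}_s,m')$ of \S\ref{unitary} via the isomorphism $\varphi\colon S_s\xrightarrow{\ \cong\ }\mathcal{S}_s$, and Proposition \ref{demresolunit} identifies $m'$ with the equivariant affine Demazure resolution $m\colon\widetilde{S}_s\to S_s$ of Theorem \ref{resolsimplycon}. This gives (ii), and the remaining assertions of (i) follow at once: $\overline{M}^{\wedge}_s$ is irreducible and contains the non-empty open reduced subscheme $C_{e^{-\mu_s}}$, while $Z_s$ is smooth and $\overline{\pi}_s|_{Z_s}$ surjective, birational and projective.

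The technical core is the middle step: verifying that the duality and colength conditions (R1), (R2) translate precisely into the lattice conditions defining the $\widetilde{\mathcal{S}}_p$, and computing the fibres of $\overline{\pi}_s$ over every boundary stratum carefully enough to see that exactly the $s+1$ divisors $Z_i$ occur in $\overline{\mathcal{M}}^{\wedge}_s$, each of pure dimension $rs$. This amounts to running the linear-algebra computation of the proof of Proposition \ref{demresolunit} over all the strata $C_{e^{-\mu_i}}$ simultaneously, and it is here that the bookkeeping of the symplectic and orthogonal auxiliary spaces is most delicate.
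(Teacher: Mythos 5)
Your overall strategy coincides with the paper's: translate the moduli conditions into lattices, identify $(\overline{M}^{\wedge}_s)_{\red}$ with the Schubert variety $S_s$, identify $Z_s$ with $\widetilde{\mathcal{S}}_s$ and $\theta$ with the resolution of Proposition \ref{demresolunit}, and treat the remaining $Z_i$ via a fibration over the strata. However, the two steps you defer as the ``technical core'' are exactly where the paper's proof lives, and your sketch of them is off in substance. Over the intermediate strata ($0<i<s$) the fibres of $\overline{\pi}_s$ are \emph{ordinary} Grassmannians $\mathscr{G}_{(s-i),(n-2i)}(\ker(\Pi|\mathcal{E}')/\Pi\mathcal{E}')$: conditions (R1)--(R2) impose no isotropy on $\mathcal{G}'$, and with isotropic Grassmann bundles the dimension count would not come out to $(s-i)(n-s-i)+i(n-i)=s(n-s)=rs$. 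The symplectic and orthogonal auxiliary spaces enter only in Lemma \ref{stricttransform}, where the closure $Z=Z_s$ of $\mathcal{U}^{\wedge}_s$ is exhibited as an orthogonal Grassmann bundle over $\mathscr{G}_{s,2m}(i\Pi\lambda_{m,k},\langle\cdot,\cdot\rangle)$; that lemma, together with the explicit linear-algebra surjectivity argument of Corollary \ref{arzcor}, is what replaces your appeal to Smithling. Quoting Smithling's topological flatness is not circular, but your claim that (N3) and (W) ``translate into'' $\inv(\Lambda_m\otimes R,\cdot)\leq-\mu_s$ is precisely the nontrivial content being imported, not a routine dictionary step; the paper deliberately proves surjectivity and birationality of $\overline{\pi}_s|_{Z_s}$ without it, which is how it re-derives Arzdorf's theorem rather than assuming results equivalent to it.

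The second genuine gap is in (ii): the assertion is the scheme-theoretic equality $\overline{M}^{\loc}_s=(\overline{M}^{\wedge}_s)_{\red}$, i.e.\ that the special fibre of the flat model is itself reduced. Topological flatness only yields $(\overline{M}^{\loc}_s)_{\red}=(\overline{M}^{\wedge}_s)_{\red}$, and your remark about the scheme-theoretic image of $\overline{\pi}_s|_{Z_s}$ does not bridge the difference. The paper closes this by Hironaka's lemma (EGA IV.5.12.8) applied to the flat $\mathcal{O}_L$-scheme $M^{\loc}_s$: its special fibre contains a non-empty open reduced subscheme (Corollary \ref{arzcor}) and its reduction is a normal Schubert variety (Theorem \ref{normality}, via Corollary \ref{locmodred}), hence the special fibre is reduced. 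You mention all three ingredients (flatness, generic reducedness, normality) but never assemble them into such a statement, so as written (ii) is not established. With the corrected fibre description over the strata and the explicit Hironaka step, your argument becomes essentially the paper's proof.
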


\begin{rem}\label{semistable1}
(i) If $(r,s)=(n-1,1)$, an explicit calculation, analogous to the calculation in \cite{Kraemer}, shows that the morphism 
\[
\pi_1:\mathcal{M}^{\wedge}_1\longrightarrow M^{\wedge}_1
\]
is a semistable resolution, i.e. $\mathcal{M}_1^{\wedge}$ is regular, and the irreducible components of the special fiber $\{Z_i\}_{i=0,1}$  are smooth divisors crossing normally. However, in this case the local model $M^{\loc}_1$ is already smooth as was pointed out by the author \cite[Prop. 4.16]{Arzdorf}. \\ 
(ii) One may ask wether $\pi_s: \mathcal{M}^{\wedge}_s \rightarrow M^{\wedge}_s$ is a semistable resolution in general. I know of no counterexample.
\end{rem}

First we need a few lemmas on the structure of $\overline{\mathcal{M}}^{\wedge}_s$. Then we explain the embedding into the twisted affine Gra§mannian in \S \ref{lastsect} below and finish the proof of Theorem \ref{locmodthm}.\\
 Let $(\mathcal{E},\mathcal{E}',\mathcal{G}')$ be the universal triple over the special fiber $\overline{\mathcal{M}}^{\wedge}_s$. Define the (non-empty) open subset 
\[
\mathcal{U}^{\wedge}_s\define\{x\in\overline{\mathcal{M}}^{\wedge}_s\;|\; (\mathcal{G}'/\Pi\mathcal{E}')\otimes\kappa(x)=0\}.
\]
Since $\Pi\mathcal{E}'|{\mathcal{U}^{\wedge}_s}=\mathcal{G}'|{\mathcal{U}^{\wedge}_s}$,
the restriction of $\overline{\pi}_s:\overline{\mathcal{M}}^{\wedge}_s\rightarrow \overline{M}^{\wedge}_s$ to $\mathcal{U}^{\wedge}_s$ is an isomorphism onto an open subset $U^{\wedge}_s\subset \overline{M}^{\wedge}_s\backslash\overline{M}^{\wedge}_{s-1}$. We will see later that $U^{\wedge}_s= \overline{M}^{\wedge}_s\backslash\overline{M}^{\wedge}_{s-1}$.\\
Consider the $n$-dimensional $k$-vector $\Pi\lambda_{m+1,k}$, and its $2m$-dimensional subspace $i\Pi\lambda_{m,k}$ ($2m=n-1$), where $i$ denotes the morphism induced by the lattice inclusion $\lambda_m\subset\lambda_{m+1}$ (see Condition (N1) above). We equip $i\Pi\lambda_{m,k}$ with the non-degenerate symplectic form $\langle\str,\str\rangle:i\Pi\lambda_{m,k}\times i\Pi\lambda_{m,k}\rightarrow k$ defined by
\[
\langle i\Pi u, i\Pi v\rangle \define (u,i\Pi v)_k,
\]
for $u,v \in \lambda_{m,k}$. Note that this is well-defined. Denote by $\mathscr{G}_{s,2m}(i\Pi\lambda_{m,k},\langle\str,\str\rangle)$ the finite-dimensional Gra§mannian of $s$-dimensional isotropic subspaces of $i\Pi\lambda_{m,k}$ with respect to $\langle\str,\str\rangle$.\\ 
By Condition (R1) above, we have a morphism
\begin{align*}
\overline{\mathcal{M}}_s^{\wedge} & \longrightarrow\mathscr{G}_{s,n}(\Pi\lambda_{m+1,k}) \\
(\mathcal{E},\mathcal{E}',\mathcal{G}') & \longmapsto \mathcal{G}'.
\end{align*}
Define $Z$ by the cartesian diagram
\[
\begin{tikzpicture} 
\matrix(a)[matrix of math nodes, 
row sep=2em, column sep=3.5em, 
text height=1.5ex, text depth=0.45ex] 
{ Z & \overline{\mathcal{M}}_s^{\wedge}\\ 
\mathscr{G}_{s,2m}(i\Pi\lambda_{m,k},\langle\str,\str\rangle) & \mathscr{G}_{s,n}(\Pi\lambda_{m+1,k}).\\}; 
\path[right hook->](a-1-1) edge (a-1-2); 
\path[->](a-1-1) edge (a-2-1); 
\path[->](a-1-2) edge (a-2-2);
\path[right hook->](a-2-1) edge (a-2-2); 
\end{tikzpicture}
\]

\begin{lem}\label{stricttransform}
The scheme $Z$ is irreducible smooth projective of dimension $rs$, and is equal to the scheme theoretic closure of $\mathcal{U}^{\wedge}_s$ in $\overline{\mathcal{M}}^{\wedge}_s$. 
\end{lem}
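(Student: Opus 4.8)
The plan is to realize $Z$ as a smooth fibration with geometrically irreducible fibres over an irreducible base, read off its dimension, and then identify it with the closure of $\mathcal{U}^{\wedge}_s$ by an elementary density argument.

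First, $Z$ is projective, being closed in the projective scheme $\overline{\mathcal{M}}^{\wedge}_s$. The defining cartesian square furnishes a morphism $q\colon Z\to\mathscr{G}_{s,2m}(i\Pi\lambda_{m,k},\langle\str,\str\rangle)$, $(\mathcal{E},\mathcal{E}',\mathcal{G}')\mapsto\mathcal{G}'$, onto the Grassmannian of $s$-dimensional isotropic subspaces of the $2m$-dimensional symplectic space $i\Pi\lambda_{m,k}$; this base is a partial flag variety for a symplectic group, hence smooth and irreducible of dimension $s(2m-s)-\binom{s}{2}$. I would analyse the fibres of $q$ directly. For a fixed isotropic $\mathcal{G}'$, a point of the fibre is a module $\mathcal{E}$ (with $\mathcal{E}'=\mathcal{E}^{\perp}$ determined by (N2)) satisfying (N1), (N3), (W), together with $\Pi\mathcal{E}'\subseteq\mathcal{G}'\subseteq\mathcal{E}'$ coming from (R1) — while (R2) holds automatically, since $\Pi^2=0$ on the special fibre and $i\Pi\lambda_{m,k}$ lies in $\ker\Pi$. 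On the special fibre (N3) says $\Pi|\mathcal{E}$ is nilpotent and (W) forces $\operatorname{rk}(\Pi|\mathcal{E})\le s$ and $\operatorname{rk}(\Pi|\mathcal{E}')\le s$; squeezing $\mathcal{G}'$ between $\Pi\mathcal{E}'$ and $\mathcal{E}'$ and unwinding these constraints, I expect to identify the fibre, compatibly with base change, with the orthogonal Grassmannian $\mathscr{G}_s(H,\phi^{\ort})$ of maximal isotropic $s$-planes in a nondegenerate $(2s+1)$-dimensional quadratic space $H$ attached to $\mathcal{G}'$ — exactly as the orthogonal Grassmannian $\mathscr{G}_p(H,\phi^{\ort})$ occurs as the fibre of $\operatorname{pr}_2$ in the proof of Proposition \ref{demresolunit}. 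Granting this, $q$ is smooth and surjective with smooth irreducible fibres of constant dimension $\binom{s+1}{2}$, so $Z$ is smooth, irreducible and projective of dimension $\left(s(2m-s)-\binom{s}{2}\right)+\binom{s+1}{2}=s(2m+1-s)=rs$.

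It remains to identify $Z$ with the scheme-theoretic closure of $\mathcal{U}^{\wedge}_s$. By its definition $\mathcal{U}^{\wedge}_s$ is a nonempty open subscheme of $\overline{\mathcal{M}}^{\wedge}_s$, and on it $\mathcal{G}'=\Pi\mathcal{E}'$; a direct computation in the selfdual periodic lattice chain — using the diagram (N1), the orthogonality (N2), and the very definition $\langle i\Pi u,i\Pi v\rangle=(u,i\Pi v)_k$ of the symplectic form — shows that $\Pi\mathcal{E}'$ lands inside $i\Pi\lambda_{m,k}$ and is isotropic there. Hence $\mathcal{U}^{\wedge}_s\subseteq Z$, so $\mathcal{U}^{\wedge}_s$ is a nonempty open subscheme of the irreducible, reduced scheme $Z$, and is therefore dense in $Z$. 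Since $Z$ is reduced and closed in $\overline{\mathcal{M}}^{\wedge}_s$, the scheme-theoretic closure of $\mathcal{U}^{\wedge}_s$ in $\overline{\mathcal{M}}^{\wedge}_s$ coincides with its scheme-theoretic closure in $Z$, which is all of $Z$.

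The main obstacle is the fibrewise analysis of $q$: one must exhibit the quadratic space $H$ explicitly, verify that the conditions (N1), (N3), (W), (R1), (R2) translate precisely into ``$s$-dimensional isotropic subspace of a nondegenerate $(2s+1)$-dimensional quadratic space'' — in particular that the relevant ambient space really has dimension $2s+1$ and that the induced form is nondegenerate — and check that the construction commutes with base change, so that $q$ is a genuine locally trivial fibration. The lattice bookkeeping needed in the last step ($\Pi\mathcal{E}'\subseteq i\Pi\lambda_{m,k}$ together with isotropy) is routine but must be carried out carefully within the periodic chain.
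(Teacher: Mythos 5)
Your global picture is the correct one --- $Z$ sits over the symplectic Grassmannian $\mathscr{G}_{s,2m}(i\Pi\lambda_{m,k},\langle\str,\str\rangle)$ with fibres odd orthogonal Grassmannians of isotropic $s$-planes in a $(2s+1)$-dimensional quadratic space, the dimension count gives $rs$, and your closing density argument ($\mathcal{U}^{\wedge}_s\subset Z$ open, $Z$ integral and closed, hence $Z$ is the closure) is exactly how the lemma ends. But the step you set aside as ``the main obstacle'' is the entire content of the lemma, and as proposed it is a genuine gap. The delicate direction is not that a point of $Z$ produces isotropic linear-algebra data (even that requires the paper's nontrivial chain of inclusions $\mathcal{V}_S^{\perp}\subset\mathcal{E}'_S\subset i\mathcal{V}_S$ for $\mathcal{V}=i^{-1}\Pi^{-1}\mathcal{G}'_{\sympl}$), but the converse: that an arbitrary $s$-dimensional isotropic subspace of $H=\mathcal{V}/(i\mathcal{V})^{\perp}$ arises from a pair $(\mathcal{E},\mathcal{E}'=\mathcal{E}^{\perp})$ which actually satisfies the local model conditions --- the $\Pi$-compatibilities in (N1), condition (N3), and above all the wedge condition (W), together with (R1). ``Unwinding the constraints'' does not obviously yield this, and your proposal offers no argument for it; without it, the smoothness, irreducibility and dimension claims for $Z$ are unsupported, and the locally-trivial-fibration and base-change assertions for $q$ are likewise unproved.

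The paper's proof is structured precisely so as to avoid this verification. It only constructs the easy direction: a morphism $f\colon Z\to\mathscr{G}_{s,2s+1}(\mathcal{V}/(i\mathcal{V})^{\perp},(\str,\overline{i}\str))$ over the symplectic Grassmannian, checked to be a projective monomorphism and hence a closed immersion. Surjectivity is then obtained for free from dimensions: the target is integral, smooth of dimension $s(n-s)=rs$, so it suffices to show $\dim(Z)\geq s(n-s)$, and this lower bound is supplied exactly by the inclusion $\mathcal{U}^{\wedge}_s\subset Z$ --- i.e.\ by the computation (that $\Pi\mathcal{E}'\subset i\Pi\lambda_{m,k}$ and is isotropic for $\langle\str,\str\rangle$) which you postpone to the very end for the closure statement, combined with the fact that $\mathcal{U}^{\wedge}_s$ maps isomorphically onto an open subset of $\overline{M}^{\wedge}_s$. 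So in a correct argument that computation must come first: it powers the dimension bound that makes $f$ an isomorphism, and only afterwards do smoothness, irreducibility, $\dim Z=rs$, your fibre description, and finally the closure statement follow. To repair your proposal, either adopt this closed-immersion-plus-dimension argument, or genuinely carry out the fibrewise identification, which means verifying (N1), (N3), (W), (R1) for every isotropic $s$-plane in $H$, proving nondegeneracy of the induced form $(\str,\overline{i}\str)$ on the rank-$(2s+1)$ module $H$, and checking compatibility with base change --- a substantially harder route than the phrase ``routine but must be carried out carefully'' suggests.
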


\begin{proof}
Let $\mathcal{G}'_{\sympl}$ be the universal element of $\mathscr{G}_{s,2m}(i\Pi\lambda_{m,k},\langle\str,\str\rangle)$. We have
\[
Z(S)=\{(\mathcal{E}_S,\mathcal{E}'_S,(\mathcal{G}'_{\sympl})_S)\in \mathcal{M}^{\wedge}_s(S)\},
\]
for any $k$-scheme $S$. We consider the locally direct summand  $\mathcal{V}=i^{-1}\Pi^{-1}\mathcal{G}'_{\sympl}$ of $\lambda_{m,\mathscr{G}_{s,2m}(i\Pi\lambda_{m,k},\langle\str,\str\rangle)}$ of rank $n+s+1$. For a triple $(\mathcal{E}_S,\mathcal{E}'_S,(\mathcal{G}'_{\sympl})_S)\in Z(S)$, we claim that
\begin{equation}\label{inclkette}
\mathcal{V}_S^{\perp}\subset \mathcal{E}'_S\subset i\mathcal{V}_S,
\end{equation}
where $\mathcal{V}_S^{\perp}$ is the orthogonal complement of $\mathcal{V}_S$ with respect to $(\str,\str)_S$. \smallskip \\
\emph{Second inclusion in \ref{inclkette}:} consider the (degenerate) symmetric bilinear form
\[ 
(\str,i\str)_S: \lambda_{m,S}\times\lambda_{m,S}\longrightarrow \mathcal{O}_S. 
\]
We see that $\ker(i)=\langle\pi e_{m+1,S}\rangle\subset\mathcal{E}_S$. From Condition (N2), it follows $\mathcal{E}'_S\subset i\lambda_{m,S}$, and hence (using Condition (R1)) the second inclusion is obvious.\smallskip \\
\emph{First inclusion in \ref{inclkette}:} by taking complements, we obtain from the second inclusion $(i\mathcal{V}_S)^{\perp}\subset \mathcal{E}_S$. Since
\[
(\mathcal{V}_S,i(i\mathcal{V}_S)^{\perp})=((i\mathcal{V}_S)^{\perp},i\mathcal{V}_S)=0,
\]
we obtain $i(i\mathcal{V}_S)^{\perp}\subset\mathcal{V}_S^{\perp}$. This is an inclusion of locally direct summands of rank $n-s-1$ and is thus an equality. Then the first inclusion follows from  
\[
\mathcal{V}_S^{\perp}=i(i\mathcal{V}_S)^{\perp}\subset i\mathcal{E}_S 
\]
and from Condition (N1) above.\smallskip \\
Note that the isotropy condition on $\mathcal{G}'_{\sympl}$ translates into $\mathcal{G}'_{\sympl}\subset\mathcal{V}_S^{\perp}$. \\
Since $i(i\mathcal{V}_S)^{\perp}=\mathcal{V}_S^{\perp}$, the morphism $i$ induces an isomorphism
\[
\overline{i}: \mathcal{V}_S/(i\mathcal{V}_S)^{\perp}\overset{\cong}{\longrightarrow} i\mathcal{V}_S/\mathcal{V}_S^{\perp},
\]
and hence gives rise to a non-degenerate symmetric bilinear form
\[
(\str,\overline{i}\str)_S:\mathcal{V}_S/(i\mathcal{V}_S)^{\perp}\times\mathcal{V}_S/(i\mathcal{V}_S)^{\perp}\longrightarrow\mathcal{O}_S.
\] 
Note that $\mathcal{V}_S/(i\mathcal{V}_S)^{\perp}$ is a locally free $\mathcal{O}_S$-module of rank $2s+1$. The condition $i\mathcal{E}_S\subset\mathcal{E}_S'$ is equivalent to the condition
\[
\mathcal{E}_S/(i\mathcal{V}_S)^{\perp}\subset (\mathcal{E}_S/(i\mathcal{V}_S)^{\perp})^{\perp'},
\]
where $(\str)^{\perp'}$ denotes the complement with respect to $(\str,\overline{i}\str)_S$. All in all, we have constructed a morphism of $\mathscr{G}_{s,2m}(i\Pi\lambda_{m,k},\langle\str,\str\rangle)$-schemes
\begin{align*}
f: Z & \longrightarrow \mathscr{G}_{s,2s+1}(\mathcal{V}/(i\mathcal{V})^{\perp}, (\str,\overline{i}\str)) \\
(\mathcal{E},\mathcal{E}',\mathcal{G}'_{\sympl}) & \longmapsto \mathcal{E}/(i\mathcal{V})^{\perp}, 
\end{align*}
where $\mathscr{G}_{s,2s+1}(\mathcal{V}/(i\mathcal{V})^{\perp}, (\str,\overline{i}\str))$ is the finite-dimensional Gra§mannian of $s$-dimensional isotropic subspaces with respect to $(\str,\overline{i}\str)$. It is easy to see that $f$ is a projective monomorphism and hence is a closed immersion [EGA4, Prop. 8.11.5]. We claim that $f$ is an isomorphism. Since $\mathscr{G}_{s,2s+1}(\mathcal{V}/(i\mathcal{V})^{\perp}, (\str,\overline{i}\str))$ is irreducible smooth (in particular reduced) of dimension
\[
(s(2m-s) - {s(s-1)\over 2})+(s(s+1) - {s(s+1)\over 2})=s(n-s),  
\]
it is enough to prove that $\dim(Z)\geq s(n-s)$. To finish the proof, we show that $\mathcal{U}^{\wedge}_s\subset Z$, i.e. if $(\mathcal{E}_S,\mathcal{E}'_S,\mathcal{G}'_S)$ is a $S$-valued point of $\mathcal{M}^{\wedge}_s\otimes k$ with $\Pi\mathcal{E}'_S=\mathcal{G}'_S$, then $\mathcal{G}'_S$ is contained in $i\Pi\lambda_{m,S}$ and is isotropic with respect to $\langle\str,\str\rangle$: \\
We have seen that $\mathcal{E}'_S\subset i\lambda_{m,S}$ and thus $\mathcal{G}'_S=\Pi\mathcal{E}'_S\subset i\Pi\lambda_{m,S}$. It remains to show that $\langle\Pi\mathcal{E}'_S,\Pi\mathcal{E}'_S\rangle_S=0$. We may assume that $S=\Spec(R)$ is the spectrum of a local ring. Let $x\in S(k)$ be the closed point. The subspace $i\Pi\mathcal{E}_{\kappa(x)}\subset\Pi\mathcal{E}'_{\kappa(x)}$ is of codimension $\leq 1$. Hence, we can write $\Pi\mathcal{E}'_{\kappa(x)}=i\Pi\mathcal{E}_{\kappa(x)}+ \spanned\{\bar{v}\}$. Let $v$ be a lift of $\bar{v}$ in $\Pi\mathcal{E}'_S$. By Nakayama's Lemma $\Pi\mathcal{E}'_S=i\Pi\mathcal{E}_S+ \spanned\{v\}$ on $S$. But $\langle i\Pi\mathcal{E}_S,\Pi\mathcal{E}'_S\rangle_S=0$ by definition of $\langle\str,\str\rangle_S$, and we have to show that $\langle v,v\rangle_S=0$. But the form $\langle\str,\str\rangle_S$ is symplectic and $\chara(k)\neq 2$. 
\end{proof}

\begin{cor}\label{arzcor}
The morphism $\overline{\pi}_s|_{Z}:Z\longrightarrow \overline{M}^{\wedge}_s$ is surjective and birational. In particular, $\overline{M}^{\wedge}_s$ is irreducible and contains a non-empty open reduced subscheme.
\end{cor}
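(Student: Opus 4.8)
The plan is to read off both assertions from Lemma~\ref{stricttransform}; the only substantial point is surjectivity. Birationality is formal: $Z$ is irreducible and, being the scheme-theoretic closure of $\mathcal{U}^{\wedge}_s$, contains $\mathcal{U}^{\wedge}_s$ as a dense open subscheme, which is moreover reduced, indeed smooth, as an open subscheme of the smooth scheme $Z$. The morphism $\overline{\pi}_s|_Z$ is proper, $Z$ being projective over $k$, so $\overline{\pi}_s(Z)$ is closed and irreducible, and $\overline{\pi}_s$ restricts to an isomorphism $\mathcal{U}^{\wedge}_s\xrightarrow{\cong}U^{\wedge}_s$ onto a dense open subscheme of it; hence $\overline{\pi}_s|_Z$ is birational onto $\overline{\pi}_s(Z)$. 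Once the equality $\overline{\pi}_s(Z)=\overline{M}^{\wedge}_s$ has been established, it follows that $\overline{M}^{\wedge}_s$ is irreducible, being a continuous image of the irreducible $Z$, and that it contains the nonempty open reduced subscheme $U^{\wedge}_s\cong\mathcal{U}^{\wedge}_s$; this is the last assertion.

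For surjectivity, since $\overline{\pi}_s|_Z$ is proper I would argue on geometric points: fix a $k'$-valued point $(\mathcal{E},\mathcal{E}')$ of $\overline{M}^{\wedge}_s$; by the cartesian square defining $Z$, producing a point of $Z$ above it amounts to producing an $s$-dimensional subspace $\mathcal{G}'\subset\mathcal{E}'$ with $\Pi\mathcal{E}'\subset\mathcal{G}'$ (condition (R1), using that $\pi=0$ on the special fibre) and $\Pi\mathcal{G}'=0$ (condition (R2)), which in addition lies in $i\Pi\lambda_{m}$ and is isotropic for the symplectic form $\langle\str,\str\rangle$. Several of the required facts are already in hand. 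On the special fibre $\Pi^{2}=0$, so $i\Pi\lambda_m\subset\ker(\Pi)$ and hence (R2) becomes automatic once $\mathcal{G}'\subset i\Pi\lambda_m$; likewise $\Pi\mathcal{E}'\subset\ker(\Pi)$, and condition (W) gives $\dim_{k'}\Pi\mathcal{E}'\leq s$, so that $\dim_{k'}\ker(\Pi|_{\mathcal{E}'})=n-\dim_{k'}\Pi\mathcal{E}'\geq n-s=r>s$. The argument in the proof of Lemma~\ref{stricttransform} already gives $\mathcal{E}'\subset i\lambda_m$, hence $\Pi\mathcal{E}'\subset i\Pi\lambda_m$, and that $\Pi\mathcal{E}'$ is isotropic for $\langle\str,\str\rangle$ (only conditions (N1)--(N3), not (W), enter there); moreover $\Pi\mathcal{E}'\subset\mathcal{E}'$ by (N1). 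Since $i\Pi\lambda_m$, of dimension $2m=n-1$, has codimension at most one in $\ker(\Pi|_{\lambda_{m+1,k'}})$ (which has dimension $n$), the subspace $\mathcal{E}'\cap i\Pi\lambda_m=\ker(\Pi|_{\mathcal{E}'})\cap i\Pi\lambda_m$ has dimension at least $r-1\geq s$ and contains $\Pi\mathcal{E}'$. The remaining task is thus to choose inside it an $s$-dimensional isotropic subspace $\mathcal{G}'$ through $\Pi\mathcal{E}'$.

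I expect this last step to be the main obstacle: it requires understanding the restriction of $\langle\str,\str\rangle$ to $\mathcal{E}'\cap i\Pi\lambda_m$ well enough to see that the isotropic subspace $\Pi\mathcal{E}'$ extends to an isotropic $s$-plane inside it. Over $\mathcal{U}^{\wedge}_s$ one has $\mathcal{G}'=\Pi\mathcal{E}'$ already, but in general $\mathcal{G}'$ must be strictly larger, and controlling this needs the bookkeeping with the duality $\hat\lambda_m=\lambda_{m+1}$ and conditions (N1)--(N2), in the spirit of the dimension count at the end of the proof of Lemma~\ref{stricttransform}. One could alternatively try induction on $s$: the corollary for $s-1$ gives $\overline{M}^{\wedge}_{s-1}$ irreducible of dimension $(s-1)(n-s+1)<rs$, hence nowhere dense in $\overline{M}^{\wedge}_s$, making $U^{\wedge}_s$ dense so that $\overline{\pi}_s(Z)\supset\overline{U^{\wedge}_s}=\overline{M}^{\wedge}_s$; but this still requires knowing that $U^{\wedge}_s$ meets every irreducible component of $\overline{M}^{\wedge}_s$, so the direct argument above seems more self-contained.
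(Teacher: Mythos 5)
Your reduction is the same as the paper's (both birationality via $\mathcal{U}^{\wedge}_s\cong U^{\wedge}_s$ and the reduction of surjectivity to geometric points, where one must produce an $s$-dimensional isotropic $\mathcal{G}'$ with $\Pi\mathcal{E}'\subset\mathcal{G}'\subset i\Pi\lambda_{m,k}\cap\mathcal{E}'$), but the step you yourself flag as ``the main obstacle'' is exactly the content of the corollary, and you do not close it. Knowing only that $\dim(i\Pi\lambda_{m,k}\cap\mathcal{E}')\geq s$ and that $\Pi\mathcal{E}'$ is isotropic is not enough: to extend $\Pi\mathcal{E}'$ to an $s$-plane that is still isotropic you must control where the $\langle\str,\str\rangle$-orthogonal complement of $\Pi\mathcal{E}'$ sits, and that is the one genuinely new computation needed here. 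Your fallback (induction on $s$ plus density of $U^{\wedge}_s$) fails for the reason you note -- without prior knowledge of the components of $\overline{M}^{\wedge}_s$ one cannot conclude -- so as it stands the proposal has a gap precisely at the decisive point.

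The missing ingredient, which is how the paper finishes, is the claim that the orthogonal complement $(\Pi\mathcal{E}'_k)^{\perp'}$ of $\Pi\mathcal{E}'_k$ inside the symplectic space $(i\Pi\lambda_{m,k},\langle\str,\str\rangle)$ is contained in $\mathcal{E}'_k$ (hence in $i\Pi\lambda_{m,k}\cap\mathcal{E}'_k$). This is a one-line computation from the definitions you already have in play: if $i\Pi v$ satisfies $\langle i\Pi v,\Pi\mathcal{E}'_k\rangle=0$, then in particular $\langle i\Pi v,i\Pi\mathcal{E}_k\rangle=0$ (using $i\mathcal{E}\subset\mathcal{E}'$ from (N1), so $i\Pi\mathcal{E}_k\subset\Pi\mathcal{E}'_k$), and by the definition of $\langle\str,\str\rangle$ this says $(v,i\Pi\mathcal{E}_k)=(\mathcal{E}_k,i\Pi v)=0$, whence $i\Pi v\in\mathcal{E}_k^{\perp}=\mathcal{E}'_k$ by (N2). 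Once this is known, the existence of $\mathcal{G}'$ is pure symplectic linear algebra: $\Pi\mathcal{E}'_k$ is isotropic of dimension $d\leq s$, so inside the symplectic quotient $(\Pi\mathcal{E}'_k)^{\perp'}/\Pi\mathcal{E}'_k$, of dimension $2(m-d)\geq 2(s-d)$, one picks any isotropic subspace of dimension $s-d$ and takes its preimage; the resulting $\mathcal{G}'$ is isotropic, $s$-dimensional, contains $\Pi\mathcal{E}'_k$, and lies in $(\Pi\mathcal{E}'_k)^{\perp'}\subset i\Pi\lambda_{m,k}\cap\mathcal{E}'_k$, as required. In other words: do not try to extend inside $i\Pi\lambda_{m,k}\cap\mathcal{E}'_k$ directly by dimension count; extend inside the perp of $\Pi\mathcal{E}'_k$ and use the claim above to see that this perp already lands in $\mathcal{E}'_k$.
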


\begin{proof}
It is enough to show that $Z(k)\rightarrow \overline{M}^{\wedge}_s(k)$ is surjective, i.e. if $(\mathcal{E}_k,\mathcal{E}'_k)\in \overline{M}^{\wedge}_s(k)$, then there is a $\mathcal{G}'_k\in\mathscr{G}_{s,2m}(i\Pi\lambda_{m,k},\langle\str,\str\rangle)(k)$ such that $(\mathcal{E}_k,\mathcal{E}'_k,\mathcal{G}'_k)\in Z_s(k)$. So we have to find a $s$-dimensional isotropic subspace $\mathcal{G}'_k\subset i\Pi\lambda_{m,k}$ such that
\[
\Pi\mathcal{E}'_k\subset \mathcal{G}'\subset i\Pi\lambda_{m,k}\cap\mathcal{E}'_k. 
\]
In the proof of Proposition \ref{stricttransform} we have seen that $(\Pi\mathcal{E}'_k)^{\perp'}\subset \Pi\mathcal{E}'_k$, where $(\str)^{\perp'}$ denotes the orthogonal complement with respect to $\langle\str,\str\rangle$. We claim that $(\Pi\mathcal{E}'_k)^{\perp'}\subset i\Pi\lambda_{m,k}\cap\mathcal{E}'_k$, i.e. $(\Pi\mathcal{E}'_k)^{\perp'}\subset\mathcal{E}'_k$: let $i\Pi v\in i\Pi\lambda_{m,k}$ with $\langle i\Pi v, \Pi\mathcal{E}'_k\rangle=0$. In particular, we have 
\[
0=\langle i\Pi v, i\Pi\mathcal{E}_k\rangle=(v,i\Pi\mathcal{E}_k)=(\mathcal{E}_k,i\Pi v),
\]   
and hence $i\Pi v\in\mathcal{E}_k^{\perp}=\mathcal{E}'_k$. This proves the claim. The existence of $\mathcal{G}'$ is now obvious.
\end{proof}

\subsection{Embedding in the affine Gra§mannian}\label{lastsect}
A standard technique is the embedding of the special fiber of the local model in the corresponding twisted affine flag variety. For the case of a ramified unitary group, see \cite[\S 3.c]{PRunitary}.   

Let $k\rpot{u}/k\rpot{t}$ be an extension of local fields of Laurent series with $u^2=t$. We use the notation from \S  \ref{unitary}. Consider the the standard lattice chain $\Lambda_\bullet$. For $i\in\mathbb{Z}$, we fix isomorphisms compatible with $\pi\otimes 1$, resp. $u\otimes 1$
\[
\lambda_i\otimes_{\mathcal{O}_L}k\cong\Lambda_i\otimes_{k\pot{t}}k,
\]  
which sends the natural basis of each side to one another. For a $k$-algebra $R$ and a $R$-valued point $(\mathcal{E},\mathcal{E}')\in M^{\wedge}_s(R)$, define the $R\pot{u}$-lattice $\mathcal{L}_{\mathcal{E}}$ (resp. $\mathcal{L}_{\mathcal{E'}}$) as the inverse image of $\mathcal{E}$ (resp. $\mathcal{E}'$) under the canonical projection
\begin{align}\label{projection}
& \Lambda_{\{m\}}\otimes_{k\pot{t}}R\pot{t}\longrightarrow\Lambda_{\{m\}}\otimes_{k\pot{t}}R \\
\notag \text{(resp.}\;\;\; & \Lambda_{\{m+1\}}\otimes_{k\pot{t}}R\pot{t}\longrightarrow\Lambda_{\{m+1\}}\otimes_{k\pot{t}}R \; \text{).}
\end{align}
Let $\mathcal{F}=LGU(\phi)/L^+\mathcal{P}$ be the twisted affine Gra§mannian for the group of unitary similitudes $GU(\phi)$ and the parahoric group scheme $\mathcal{P}=\mathcal{P}_I$ corresponding to $I=\{m\}$ (cf. \cite[\S 3.c]{PRunitary}). Then we obtain a closed immersion into the neutral component 
\begin{equation}\label{embedding}
\begin{split}
\overline{M}^{\wedge}_s & \longrightarrow \mathcal{F}^0\\
(\mathcal{E},\mathcal{E}') &\longmapsto u^{-1}\mathcal{L}_{\mathcal{E}}(\subset u^{-1}\mathcal{L}_{\mathcal{E'}}),
\end{split}
\end{equation}
which is equivariant for the action of $L^+\mathcal{P}$. The point $(\Pi\lambda_{m,k},\Pi\lambda_{m+1,k})$ maps to the standard chain corresponding to $\Lambda_m$. As a corollary of Corollary \ref{arzcor}, we obtain the main result from \cite{Arzdorf}:

\begin{cor}\label{locmodred}
The special fiber of the local model $\overline{M}^{\loc}_s$ is via \eqref{embedding} identical to the Schubert variety $S_p$ defined in \S \ref{unitary} for $p=s$. Hence, it is normal, Frobenius split (if $\chara(k)>0$) and with only rational singularities. 
\end{cor}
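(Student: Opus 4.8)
The plan is to assemble the ingredients already in place. First I would invoke Theorem \ref{locmodthm}(ii), which identifies the special fiber $\overline{M}^{\loc}_s$ of the local model with the reduced locus $(\overline{M}^{\wedge}_s)_{\mathrm{red}}$, and states that $\overline{\pi}_s|_{Z_s}$ factors through $\overline{M}^{\loc}_s$ to give $\theta:Z_s\to\overline{M}^{\loc}_s$, which under the embedding \eqref{embedding} is the equivariant affine Demazure resolution of Proposition \ref{demresolunit}. By Corollary \ref{arzcor} (combined with Lemma \ref{stricttransform}, so that $Z=Z_s$ in the relevant sense), $\overline{\pi}_s|_{Z_s}$ is surjective and birational onto $\overline{M}^{\wedge}_s$, hence its factorization $\theta$ is surjective and birational onto $(\overline{M}^{\wedge}_s)_{\mathrm{red}}=\overline{M}^{\loc}_s$. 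The point is then that, via \eqref{embedding}, the $L^+\mathcal{P}$-equivariant closed subscheme $\overline{M}^{\loc}_s\subset\mathcal{F}^0$ is reduced, irreducible (Corollary \ref{arzcor}), and is the image of the Demazure resolution $\theta=\overline{\pi}_s|_{Z_s}$; since that resolution is precisely the one of Corollary \ref{exfundcounitary}/Proposition \ref{demresolunit} whose image is the Schubert variety $S_p$ for $p=s$, we conclude $\overline{M}^{\loc}_s=S_s$ as reduced closed subschemes of $\mathcal{F}^0$. (Both sides are reduced and have the same $k$-points, being the image of the same surjective morphism; a reduced closed subscheme is determined by its underlying set.)

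Once the identification $\overline{M}^{\loc}_s\cong S_p$ with $p=s$ is established, the geometric consequences follow from Theorem \ref{normality}: the group $G=\SU(\phi)$ (or rather the relevant group $GU(\phi)$) splits over the tamely ramified quadratic extension $\tilde K/K$ since $\chara(k)\neq 2$, and $\pi_1(G_{\drv})$ has order dividing $2$ (in fact $G$ is simply connected, so it is trivial), which is prime to $\chara(k)$ by our standing assumption. Hence Theorem \ref{normality} applies to the Schubert variety $S_p$ and yields that $\overline{M}^{\loc}_s$ is normal, Frobenius-split when $\chara(k)>0$, and has only rational singularities. This gives exactly the asserted statement, recovering the main result of \cite{Arzdorf}.

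I expect the only real subtlety to be the scheme-theoretic (as opposed to merely set-theoretic) identification $\overline{M}^{\loc}_s=S_s$ inside $\mathcal{F}^0$: one must check that the embedding \eqref{embedding} carries the reduced structure on $\overline{M}^{\loc}_s=(\overline{M}^{\wedge}_s)_{\mathrm{red}}$ to the reduced Schubert variety structure, and that the $L^+\mathcal{P}$-orbit stratification matches the stratification of $S_s$ by $L^+\mathcal{P}e^{-\mu_i}L^+\mathcal{P}/L^+\mathcal{P}$ for $i=0,\dots,s$ from Corollary \ref{exfundcounitary}. But since both schemes are reduced closed subschemes of $\mathcal{F}^0$ and $\theta=\overline{\pi}_s|_{Z_s}$ is the same morphism in both descriptions (by Theorem \ref{locmodthm}(ii)), this is immediate: a morphism has a unique scheme-theoretic image when the target is reduced, and that image is $S_s$ by Proposition \ref{demresolunit}. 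So the genuine content is entirely in Theorem \ref{locmodthm}; the corollary is then a formal consequence of that theorem together with Theorem \ref{normality}.
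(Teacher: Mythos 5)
There is a genuine gap, and it is a circularity: you derive the corollary from Theorem \ref{locmodthm}(ii), but in the paper the logical order is the reverse --- the proof of Theorem \ref{locmodthm}(ii) explicitly invokes Corollary \ref{locmodred} (together with Proposition \ref{demresolunit}) to identify $\theta$ with the Demazure resolution, and the assertion in \ref{locmodthm}(ii) that $\overline{M}^{\loc}_s=(\overline{M}^{\wedge}_s)_{\red}$ is precisely the content that Corollary \ref{locmodred} is supposed to establish. So you cannot use it here without an independent proof of that assertion, which you do not give. The genuine mathematical point you are missing is reducedness of the special fiber: $M^{\loc}_s$ is defined as the flat closure of the generic fiber, and a priori its special fiber $\overline{M}^{\loc}_s$ could be non-reduced; from Corollary \ref{arzcor} and the embedding \eqref{embedding} one only gets that $(\overline{M}^{\loc}_s)_{\red}=(\overline{M}^{\wedge}_s)_{\red}$ is a Schubert variety. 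The paper closes this gap by first applying Theorem \ref{normality} to conclude that this reduced Schubert variety is normal, and then applying Hironaka's lemma [EGA4, IV.5.12.8] to the flat $\mathcal{O}_L$-scheme $M^{\loc}_s$ (using that the special fiber contains a dense open reduced subscheme, Cor. \ref{arzcor}) to deduce $\overline{M}^{\loc}_s=(\overline{M}^{\loc}_s)_{\red}$. Your remark that the scheme-theoretic identification is ``immediate'' because a reduced closed subscheme is determined by its underlying set begs exactly this question: the issue is not the scheme structure on the image of $\theta$, but whether $\overline{M}^{\loc}_s$ itself is reduced, and surjectivity/birationality of $\overline{\pi}_s|_{Z}$ alone cannot give that.

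A second, smaller gap: identifying the resulting Schubert variety with the specific $S_p$, $p=s$, of \S\ref{unitary} is not automatic, since \eqref{embedding} lands in the affine Gra{\ss}mannian for $\GU(\phi)$ while $S_p$ was defined for $\SU(W,\phi)$; the paper passes through the isomorphism $\mathcal{F}^0_{\red}\cong$ (twisted affine Gra{\ss}mannian for $\SU$) of \cite[Prop. 6.6]{PRtwisted} and the calculation of \cite[2.d.2]{PRunitary} to pin down the orbit closure. Your verification that Theorem \ref{normality} applies (tame splitting since $\chara(k)\neq 2$, and $\pi_1(G_{\drv})$ trivial for the simply connected $\SU$) is correct and matches the paper; but as written your argument establishes the corollary only up to nilpotents and only modulo results whose proofs in the paper depend on this very corollary.
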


\begin{proof}
The proof goes along the lines of \cite[Remark 11.4]{PRtwisted}: the embedding of $\overline{M}^{\wedge}_s$ into the affine Gra§mannian identifies $(\overline{M}^{\wedge}_s)_{\red}=(\overline{M}^{\loc}_s)_{\red}$ (Cor. \ref{arzcor}) with a Schubert variety in $\mathcal{F}^0$. By Theorem \ref{normality}, the Schubert variety $(\overline{M}^{\loc}_s)_{\red}$ is normal, Frobenius split and has only rational singularities. Since $M^{\loc}_s$ contains an open reduced subset (Cor. \ref{arzcor}), an application of Hironaka's lemma [EGA4, IV.5.12.8] to the flat $\mathcal{O}_L$-scheme $M^{\loc}_s$ shows that $\overline{M}^{\loc}_s=(\overline{M}^{\loc}_s)_{\red}$. In fact, $\overline{M}^{\loc}_s$ is identical to $S_p$ with $p=s$: the reduced neutral component $\mathcal{F}^0_{\text{red}}$ is isomophic to the corresponding twisted affine Gra§mannian for the special unitary group \cite[Prop. 6.6]{PRtwisted}.  Then the same calculation as in \cite[2.d.2]{PRunitary} shows that $\overline{M}^{\loc}_s$ is indeed identical to $S_p$.
\end{proof}

\begin{rem}
If $(r,s)=(n-1,1)$, the special fiber $\overline{M}^{\loc}_1$ is the quasi-minuscule Schubert variety in $\mathcal{F}^0$. But by Remark \ref{semistable1} above, $\overline{M}^{\loc}_1$ is smooth. Hence, the analogue of the Theorem of S. Evens and I. Mirkovi\'c \cite[Cor. B]{EvensMirkovic}, namely that the smooth locus of a Schubert variety is the open Schubert cell, does not hold for \emph{twisted} affine Gra§mannians. We will dicuss this question in a future paper.
\end{rem}

\begin{proof}[Proof of Theorem \ref{locmodthm}.]
First we prove (ii):\\
We will embed the irreducible smooth scheme $Z$ (cf. Lemma \ref{stricttransform}) in a product of twisted affine flag varieties compatible with the projection $\overline{\pi}_s|_{Z}:Z\rightarrow\overline{M}^{\wedge}_s$, the embedding \eqref{embedding} and equivariant for the action of $L^+\mathcal{P}$: \\
For a point $(\mathcal{E},\mathcal{E}',\mathcal{G}')\in Z(R)$, denote by $\mathcal{L}_{\mathcal{G}'}$ the $R\pot{u}$-lattice defined as the preimage of $\mathcal{G}'\subset\mathcal{E}'$ under the projection $\mathcal{L}_{\mathcal{E}'}\rightarrow\mathcal{E}'$ (cf. \eqref{projection}). Let $p=s$. Then condition (R2) implies that $\mathcal{G}'\subset \Pi\Lambda_{m+1,R}$, and it is easy to see that $u\mathcal{L}^{\vee}_{\mathcal{G}'}\subset\Lambda_m$ defines a point of $(L^+\mathcal{P}/L^+\mathcal{Q}_p)(R)$, where $\mathcal{Q}_p$ is the parahoric group scheme of $GU(\phi)$ corresponding to the set of simple reflections $\{s_1,\ldots,\hat{s}_p,\ldots,s_m\}$ as in \S \ref{unitary}. We obtain a closed immersion
\begin{equation}
\begin{split}
\notag Z &\longrightarrow L^+\mathcal{P}/L^+\mathcal{Q}_p\times\mathcal{F}^0 \\
(\mathcal{E},\mathcal{E}',\mathcal{G}') & \longmapsto (u\mathcal{L}^{\vee}_{\mathcal{G}'}\subset\Lambda_m,u^{-1}\mathcal{L}_{\mathcal{E}}),
\end{split}
\end{equation}
such that the diagram
\[
\begin{tikzpicture} 
\matrix(a)[matrix of math nodes, 
row sep=2.5em, column sep=3.5em, 
text height=2ex, text depth=0.45ex] 
{ Z & L^+\mathcal{P}/L^+\mathcal{Q}_p\times\mathcal{F}^0 \\ 
\overline{M}^{\wedge}_s & \mathcal{F}^0 \\}; 
\path[right hook->](a-1-1) edge (a-1-2); 
\path[->](a-1-1) edge node[left] {$\overline{\pi}_s|_Z$} (a-2-1); 
\path[->](a-1-2) edge node[left] {$\text{pr}_2$} (a-2-2);
\path[right hook->](a-2-1) edge (a-2-2); 
\end{tikzpicture}
\]
commutes and is equivariant for the action of $L^+\mathcal{P}$. Since $\overline{\pi}_s|_{Z}:Z\rightarrow\overline{M}^{\wedge}_s$ is $L^+\mathcal{P}$-equivariant, it is an isomorphism over the open set $U^{\wedge}_s=\overline{M}^{\wedge}_s\backslash\overline{M}^{\wedge}_{s-1}$. This implies $Z_s=Z$ is irreducible smooth and of dimension $rs$. Then $\overline{\pi}_s|_{Z_s}$ factors as a birational projective morphism $\theta: Z_s\rightarrow\overline{M}^{\loc}_s$ of algebraic varieties, and is by Corollary \ref{locmodred} and Proposition \ref{demresolunit} identical to the equivariant affine Demazure resolution $m:\widetilde{S}_p\rightarrow S_p$ from \S \ref{unitary}. This shows (ii).\\
In view of Corollary \ref{arzcor}, it remains to show that $ \overline{\mathcal{M}}^{\wedge}_s=\cup_{i=0}^sZ_i$ with $Z_i$ irreducible generically smooth of dimension $rs$. The scheme $Z_0$ is the finite-dimensional Gra§mannian $\mathscr{G}_{s,n}(\Pi\lambda_{m,k})$ and hence also irreducible smooth of dimension $rs$. For $i=1,\ldots,s-1$, we see that the morphism $\overline{\pi}_s^{-1}(U^{\wedge}_i)\rightarrow U^{\wedge}_i$ is a $\mathscr{G}_{(s-i),(n-2i)}(\ker(\Pi|\mathcal{E}')/\Pi\mathcal{E}')$-bundle and hence $Z_i$ is irreducible and generically smooth of dimension
\[
(s-i)(n-s-i) + i(n-i)=s(n-s).
\] 
This shows (i).
\end{proof}

\bibliographystyle{abbrv}
\newpage

\end{document}